\begin{document}

\title{On the Slice-Ribbon Conjecture for pretzel knots}%

\author{Ana G. Lecuona}%

\email{ana.lecuona@latp.univ-mrs.fr}%

\address{LATP, Aix-Marseille Universit\'e, Marseille, FRANCE}%

\thanks{This work is partially supported by the Spanish GEOR MTM2011-22435.}

\keywords{Pretzel knots, slice--ribbon conjecture, rational homology balls}%

\date{September 2, 2013}%

\begin{abstract}
We give a necessary, and in some cases sufficient, condition for sliceness inside the family of pretzel knots $P(p_1,...,p_n)$ with one $p_i$ even. The three stranded case yields two interesting families of examples: the first consists of knots for which the non-sliceness is detected by the Alexander polynomial while several modern obstructions to sliceness vanish. The second family has the property that the correction terms from Heegaard-Floer homology of the double branched covers of these knots do not obstruct the existence of a rational homology ball; however, the Casson-Gordon invariants show that the double branched covers do not bound rational homology balls.
\end{abstract}
\maketitle

\begin{quote}
\begin{footnotesize}
\tableofcontents
\end{footnotesize}
\end{quote}

\section{Introduction}\label{s:int}

Pretzel knots have been thoroughly studied since they were first introduced by Reidemeister in \cite{b:Re}. In recent work Greene and Jabuka have determined the order in the smooth knot concordance group of $3$--stranded pretzel knots $P(p_1,p_2,p_3)$ with all $p_i$ odd \cite[Theorem~1.1]{b:GJ}. A corollary of their result is that the slice--ribbon conjecture proposed by Fox in \cite{b:Fo} holds true for this family of knots. In this paper we address the question of sliceness in the family of pretzel knots $P(p_1,...,p_n)$ with one $p_i$ even. Our main result, Theorem~\ref{t:slice}, gives a necessary condition for sliceness in this family. The condition obtained is not sufficient, but we propose a conjecture of what constraints should be added to Theorem~\ref{t:slice} to obtain a complete characterisation of ribbon pretzel knots. 

As a byproduct of our research we have found two interesting families of pretzel knots. The first one is a one parameter family of knots for which most of the known slice obstructions vanish:  the signature, the determinant, the Arf invariant, the $s$-invariant and the $\tau$ invariant among others. However, the Alexander polynomial is able to show that more than three quarters of the knots in this family are not slice. This property makes this set of knots an excellent source to test future slice invariants. The second interesting family consists of a set of pretzel knots whose double branched covers do not bound rational homology balls. The non-existence of these balls is determined via the Casson-Gordon invariants which, for this particular family of $3$-manifolds, turn out to be a more powerful tool than the $d$-invariants from Heegaard Floer homology in obstructing the existence of a rational ball.

Given nonzero integers $p_1,...,p_n$ the pretzel link $P(p_1,...,p_n)$ is obtained by taking $n$ pairs of parallel strands, introducing $p_i$ half twists on the $i$-th pair, with the convention $p_{i}>0$ for right--hand crossings and $p_{i}<0$ for left--hand crossings, and connecting the strands with $n$ pairs of bridges. As an example, the first knot in Figure~\ref{f:cob_pretzel} corresponds to $P(-5,5,-3,3,7)$. If more than one of the $p_i$ is even or if $n$ is even and none of the $p_i$ is even then $P(p_{1},...,p_{n})$ is a link. In all other cases it is a knot. Inside the family of pretzel \emph{knots} $P(p_1,...,p_n)$ we limit our considerations to those with one even parameter and moreover from now on we fix $n\geq 3$ and $|p_i|>1$ for all $i$. Note that if $n\leq 2$ or if $n=3$ and one of the $p_i$ satisfies $p_i=\pm 1$, then the pretzel knot is a $2$--bridge, already studied in \cite{b:Li,b:Li2}.

This paper addresses the question of sliceness of pretzel knots. A knot in $S^{3}$ is said to be \emph{slice} if it bounds a disc smoothly embedded in the $4$-ball. The double cover of $S^{3}$ branched over a slice knot bounds a rational homology 4-ball. This property is one of the main obstructions we shall study. The strategy we follow is straightforward: use different obstructions to rule out the non-slice pretzel knots and show that the remaining knots are slice by explicitly constructing the slice discs in $B^{4}$. The discs we find have the property of being \emph{ribbon}, i.e.\ they have no local maxima for the radial function in $B^{4}$. The slice--ribbon conjecture says that all slice knots are ribbon and the results in the present work support this conjecture. 

It is well known \cite[Theorem~12.19]{b:BZ} (recall that pretzel knots are a particular case of the more general family of Montesinos links) that among the $n!$ permutations of the parameters $(p_1,...,p_n)$, the $2n$ of them which correspond to cyclic permutations, order reversing permutations and compositions of these leave invariant the knot $P(p_1,...p_n)$. Two pretzel knots $K$ and $K^*$ which are not isotopic but who share the same set of parameters are \emph{mutants}, that is $K^*$ can be obtained from $K$ by removing a $3$-ball from $S^3$ that meets $K$ in two proper arcs and gluing it back via an involution $\tau$ of its boundary $S$, where $\tau$ is orientation preserving and leaves the set $S\cap K$ invariant. All pretzel knots defined by the same set of parameters $p_1,...,p_n$ have the same double branched cover. 

Our main result, Theorem~\ref{t:slice}, is stated for pretzel knots up to reordering of the parameters because the obstructions to sliceness that we analyze live in the double branched cover of these knots. Up to reordering we are able to establish the sliceness of pretzel knots with one even $p_{i}$ except for the following set
$$\mathcal E=\{a,-a-2,-\frac{(a+1)^{2}}{2},q_{1},-q_{1},\dots,q_{m},-q_{m}\},$$
where $m\geq 0$, $a,|q_{i}|\geq 3$ odd and $a\equiv 1,11,37,47,49,59$ (mod 60). Our main result is the following.

\begin{thm}\label{t:slice}
Let $K=P(p_1,...,p_n)$ be a slice pretzel knot with one even parameter and such that $\{p_{1},\dots,p_{n}\}\not\subset\mathcal E$. Then, the $n$--tuple of integers $(p_1,...,p_n)$ can be reordered so that it has the form
\begin{itemize}
\item[$(1)$] $(q_1,-q_1\pm 1,q_2,-q_2,...,q_{\frac{n}{2}},-q_{\frac{n}{2}})$ if $n$ is even;
\item[$(2)$] $(q_0,q_1,-q_1,...,q_{\frac{n-1}{2}},-q_{\frac{n-1}{2}})$ if $n$ is odd.
\end{itemize}
\end{thm}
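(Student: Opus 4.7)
The plan is to follow a Lisca-type strategy based on Donaldson's diagonalization theorem. The double branched cover $\Sigma(K)$ of a pretzel knot $P(p_1,\dots,p_n)$ is a Seifert fibered rational homology sphere that bounds a canonical star-shaped plumbing $4$-manifold $X$: there is a central vertex and $n$ linear arms whose weights are read from continued fraction expansions associated with the $p_i$. After possibly replacing $K$ by its mirror, I would arrange the orientation so that $X$ is negative definite (this typically requires an argument on the signs and values of the $p_i$; since the obstructions to sliceness are symmetric under mirroring this is harmless).

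If $K$ is slice then $\Sigma(K)$ bounds a smooth rational homology $4$-ball $W$, so $Z=X\cup_{\Sigma(K)}(-W)$ is a smooth closed negative definite $4$-manifold with $b_2(Z)=\mathrm{rk}\,H_2(X)$. By Donaldson's theorem, $Q_Z\cong -\mathbb{I}_{b_2(Z)}$, and consequently the intersection lattice of $X$ embeds as a full-rank sublattice of the standard diagonal lattice $(\mathbb{Z}^N,-\mathrm{Id})$. Thus sliceness is reduced to a purely lattice-theoretic statement about the plumbing of $P(p_1,\dots,p_n)$.

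The core of the argument is the combinatorial classification of such embeddings. Each arm of the plumbing corresponds to a linear chain of spheres whose images are sequences of $\pm 1$ vectors with prescribed inner products, and the orthogonality conditions across arms, together with the equations coming from the central vertex, yield very rigid constraints. I would first classify the admissible embeddings of a single arm (in terms of its continued fraction expansion), then pair arms up according to which standard basis vectors they share, and finally extract from these pairings the global pattern. The target conclusion is that, up to permutation, all but at most one pair of parameters must cancel as $(q_i,-q_i)$, and the remaining one or two parameters must have the specific shape described in cases $(1)$ and $(2)$, depending on the parity of $n$ and on which parameter is the even one.

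The main obstacle is the size and delicacy of the case analysis: the central vertex of the star graph couples the arms in nontrivial ways, and many potential embeddings survive Donaldson's obstruction without actually corresponding to slice knots. These surviving, non-realizable embeddings are exactly what produces the exceptional set $\mathcal{E}$: for the family $\{a,-a-2,-(a+1)^2/2,q_1,-q_1,\dots,q_m,-q_m\}$ a lattice embedding does exist, and the modular conditions $a\equiv 1,11,37,47,49,59\pmod{60}$ must arise from an arithmetic refinement (divisibility of the discriminant of the relevant quadratic form, or a Diophantine condition for compatibility of arm embeddings at the central vertex). Isolating the six residues modulo $60$ and proving that no other reordered tuple survives the embedding analysis will be the most intricate part of the proof.
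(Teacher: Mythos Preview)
Your starting point is correct: Donaldson's diagonalization applied to the closed manifold $X\cup_{\Sigma(K)}(-W)$ does force the plumbing lattice to embed in $(\Z^N,-\mathrm{Id})$, and the paper uses exactly this. But the lattice embedding alone is \emph{not} sufficient here, and the paper is explicit about this: for pretzel knots with one even parameter the existence of the embedding does not even force the signature to vanish. The proof in the paper is a layered argument using four further obstructions, and your proposal misses all of them.

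Concretely, the paper first couples the embedding with the condition $\sigma(K)=0$, expressed via Saveliev's formula $\sigma(K)=\mathrm{sign}(Q_\Gamma)-w\cdot w$ in terms of the Wu class. This is what rigidifies the embedding enough to pin down the parameters in the cases \textbf{(p1)} and \textbf{(p2)} (Lemmas~5.2 and 5.3). For \textbf{(p3)} even this combination leaves a free integer $\lambda$: one gets $Y=Y(-c_1-\lambda^2-(c_1-\lambda)^2,-c_1-2,-c_2,\dots,-c_t;c_1,\dots,c_t)$ (Lemma~5.4). To kill most values of $\lambda$ the paper invokes the Ozsv\'ath--Szab\'o $d$-invariants via the Greene--Jabuka factorization $Q_\Gamma=-AA^t$, which bounds the number of vanishing correction terms and forces $|\lambda-\rho|\le 3$. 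This still leaves two one-parameter families. One of them, $P(a,-a-2,-a-\tfrac{a^2+9}{2})$, is eliminated by an explicit Casson--Gordon computation; the $d$-invariants do not see this obstruction.

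The exceptional set $\mathcal E$ and the residues $1,11,37,47,49,59\pmod{60}$ do not come from any lattice-theoretic or discriminant argument as you suggest. They come from the Fox--Milnor condition on the Alexander polynomial of the remaining family $P(a,-a-2,-\tfrac{(a+1)^2}{2})$: one computes $\Delta_a(t)$ explicitly, reduces modulo primes dividing $\tfrac{(a+1)^2}{4}$, and counts irreducible self-reciprocal factors of the resulting cyclotomic products. The six residues are precisely those for which this cyclotomic argument fails to produce an odd-multiplicity self-reciprocal factor. A purely lattice-embedding proof cannot reach this conclusion, because for every odd $a\ge 3$ the embedding exists and all the Heegaard--Floer and signature obstructions vanish.
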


\begin{rem}
Ligang Long \cite{b:Long} has independently obtained Theorem~\ref{t:slice} for the case $n=4$. Moreover, he has several partial results concerning the sliceness of pretzel knots without the restriction of having one even parameter.  
\end{rem}

As further explained in Section~\ref{s:cor} not all possible orders of the parameters in Theorem~\ref{t:slice} yield slice knots. In Corollary~\ref{c:affirm} we show that for certain orders of the parameters the knots in the above Theorem are actually slice. Moreover we conjecture that the orders proposed in Conjecture~\ref{c:conj} are all the possible orders of the parameters in Theorem~\ref{t:slice}  yielding slice knots.

As detailed in Section~\ref{s:weird} most of the pretzel knots in the family $P(a,-a-2,-\frac{(a+1)^{2}}{2})$, $a\geq 3$ odd, are not slice. However establishing that none of them is slice is still an open challenge. There is a great amount of evidence supporting the following conjecture

\begin{conj}
If $\{p_{1},\dots,p_{n}\}\subset\mathcal E$ then the pretzel knot $P(p_{1},\dots,p_{n})$ is not slice.
\end{conj}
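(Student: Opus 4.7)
The plan is to reduce to the three--stranded case and then invoke the obstructions developed in Section~\ref{s:weird}.

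\emph{Reduction to the three--stranded case.} The first step is to show that cancelling pairs $(q_j,-q_j)$ of parameters preserves the concordance class of the pretzel knot. For each such pair, place the $q_j$ and $-q_j$ half--twist tangles as neighbouring columns in the standard pretzel diagram and attach a band across them. The saddle splits the knot into a copy of $P(p_1,\dots,\widehat{q_j},\widehat{-q_j},\dots,p_n)$ together with an unknotted circle; capping off the circle yields a connected cobordism of Euler characteristic zero, i.e.\ a concordance. Iterating this move shows that the general knot in the conjecture is concordant to $P(a,-a-2,-\tfrac{(a+1)^{2}}{2})$, so the conjecture reduces to the case $m=0$.

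\emph{The three--stranded case.} As described in Section~\ref{s:weird}, most of the classical slice obstructions (signature, Arf, determinant, $s$, $\tau$) vanish uniformly in $a$ for $K_a=P(a,-a-2,-\tfrac{(a+1)^{2}}{2})$, and the Fox--Milnor condition on the Alexander polynomial handles the residues of $a$ modulo $60$ that lie outside the set $R=\{1,11,37,47,49,59\}$. For $a\bmod 60\in R$ the natural tool is the Casson--Gordon invariants of the Seifert fibered double branched cover $Y_a=\Sigma_2(K_a)$. Since the Seifert invariants of $Y_a$ are explicit, the linking form on $H_1(Y_a)$ and its metabolisers can be written down in closed form, and the Casson--Gordon signature defects associated to characters of prime--power order admit an arithmetic formula in terms of $a$ and Dedekind sums. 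The goal is then to show that for every $a$ with $a\bmod 60\in R$, and for every Lagrangian subgroup $L$ of the linking form, some character vanishing on $L$ produces a non--zero signature defect, contradicting the Casson--Gordon sliceness criterion.

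\emph{Main obstacle.} The hard part is the uniformity in $a$. Any successful argument must detect non--sliceness precisely where the classical obstructions fail, so it must genuinely use a higher--order invariant; the modulus $60=4\cdot 3\cdot 5$ appearing in $R$ strongly suggests that the proof requires a quadratic--reciprocity--style analysis of Dedekind--sum expressions, carried out simultaneously with an enumeration of all Lagrangians of the linking form of $Y_a$. The evidence mentioned in the introduction presumably consists of verifying this programme for small values of $a$ and for Lagrangians of small index, while a uniform proof covering every $a$ in every residue class of $R$ remains out of reach.
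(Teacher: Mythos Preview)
This statement is a \emph{conjecture} in the paper, not a theorem; the paper offers no proof and explicitly leaves it open. So there is no ``paper's own proof'' to compare against, and your proposal must be judged on its own merits.

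The fatal gap is in your three--stranded step. You propose to use Casson--Gordon invariants of the double branched cover $Y_a=\Sigma_2(K_a)$, appealing to ``the linking form on $H_1(Y_a)$ and its metabolisers'' and ``characters of prime--power order''. But the determinant of $K_a=P(a,-a-2,-\tfrac{(a+1)^2}{2})$ equals $1$, so $Y_a$ is an \emph{integer homology sphere}: $H_1(Y_a;\Z)=0$. There is no linking form, there are no non--trivial metabolisers, and there are no non--trivial characters $\chi\colon H_1(Y_a;\Z)\to\Z_{p^r}$ to feed into the Casson--Gordon machine. The paper states this explicitly at the start of Section~\ref{s:weird}: ``the Casson--Gordon invariants cannot be used to study the existence of rational homology balls bounded by the Seifert manifolds $Y_a$''. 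Your entire programme for the residues $a\bmod 60\in\{1,11,37,47,49,59\}$ therefore never gets off the ground.

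There is also a problem with your reduction step. You write ``place the $q_j$ and $-q_j$ half--twist tangles as neighbouring columns''; but reordering the strands of a pretzel knot produces a \emph{mutant}, which is in general a different knot (cf.\ the example $P(3,5,-3,-5,7)$ versus $P(3,-3,5,-5,7)$ discussed in Section~\ref{s:cor}). So the ribbon move you describe exhibits a concordance from some mutant of $P(p_1,\dots,p_n)$ to $K_a$, not from $P(p_1,\dots,p_n)$ itself. The paper gets around this for the Alexander--polynomial part by using that $\Delta$ is a mutation invariant; any replacement obstruction you propose for the remaining residues would likewise have to be mutation--invariant, or else the reduction collapses.
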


Note that a pretzel knot of the form $P(p_1,p_2,p_3)$ is independent of the order of the parameters, since cyclic permutations, order reversing permutations and compositions of these comprise all possible permutations of three elements. For $3$-stranded pretzel knots whose defining parameters are not in $\mathcal E$ an
easy corollary of Theorem~\ref{t:slice} is the validity of the slice--ribbon conjecture.

In the following Corollary~\ref{c:cor} the results for $P(p_1,p_2,p_3)$ with three odd parameters were already proved in \cite{b:GJ}. Our work proves the statement for $3$--stranded pretzel knots with one even parameter and leaves out the case $P(a,-a-2,-\frac{(a+1)^{2}}{2})$, $a\geq 3$ odd and $a\equiv 1,11,37,47,49,59$ (mod 60).

\begin{cor}\label{c:cor}
The slice--ribbon conjecture holds true for pretzel knots of the form $P(p_1,p_2,p_3)$ where $p_1,p_2,p_3\in\Z$ and $\{p_{1},p_{2},p_{3}\}\not\subset\mathcal E$.
\end{cor}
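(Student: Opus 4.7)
The plan is to split into cases by the parity of the parameters. Since $P(p_1,p_2,p_3)$ is a knot, either all three parameters are odd or exactly one is even (otherwise one obtains a link, as noted in the introduction). In the all-odd case the corollary is immediate from \cite[Theorem~1.1]{b:GJ}, so the work reduces to handling the case where exactly one $p_i$ is even.

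For that remaining case, I would invoke Theorem~\ref{t:slice}. By hypothesis $K$ is slice and $\{p_1,p_2,p_3\}\not\subset\mathcal E$, so the theorem yields a reordering of one of the two listed forms. Since $n=3$ is odd only case $(2)$ applies, and the reordering reads $(q_0,q_1,-q_1)$ with $q_0$ the unique even parameter and $q_1$ odd. As the introduction recalls, a 3-stranded pretzel knot is invariant under every permutation of its parameters, so $K$ is isotopic to $P(q_0,q_1,-q_1)$.

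It then remains to exhibit an explicit ribbon disc for $P(q_0,q_1,-q_1)$. This should be the standard symmetric pretzel ribbon move: attach a single band joining the $q_1$ and $-q_1$ twist regions that exploits their mirror symmetry, whose associated saddle cobordism converts $K$ into a 2-component unlink; capping off the two unknots by disjoint discs in $B^4$ yields a ribbon disc for $K$. This sufficiency statement is presumably the $n=3$ instance of Corollary~\ref{c:affirm}, so one may either cite that result or verify the band move directly.

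The only substantive point is to check that this symmetric band indeed produces an unlink---a routine diagrammatic verification---so the main burden of the corollary really lies in Theorem~\ref{t:slice}, with everything else a short reduction via Greene--Jabuka and the explicit ribbon construction.
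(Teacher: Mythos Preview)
Your argument follows the paper's route almost exactly: parity split, Greene--Jabuka for the all-odd case, Theorem~\ref{t:slice} for the one-even case, then permutation-invariance of $3$-stranded pretzels together with the ribbon move (which is precisely Proposition~\ref{p:alg}).

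There is, however, one genuine omission. The corollary is stated for arbitrary nonzero $p_1,p_2,p_3\in\Z$, whereas Theorem~\ref{t:slice} carries the standing hypothesis $|p_i|>1$ for all $i$ fixed in the introduction. Thus your invocation of Theorem~\ref{t:slice} in the one-even case is not justified when some odd parameter equals $\pm 1$; for instance $P(2,1,-3)$ has exactly one even parameter but lies outside the scope of Theorem~\ref{t:slice}. The paper closes this gap by first disposing of the case where some $p_i=\pm 1$: such a knot is $2$-bridge (see \cite[Figure~2]{b:GJ}), and Lisca's result \cite[Corollary~1.3]{b:Li} establishes the slice--ribbon conjecture there. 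Only after this reduction does the paper assume $|p_i|\geq 2$ and branch on parity. Once you insert this preliminary case your proof coincides with the paper's.
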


In order to prove Theorem~\ref{t:slice} we start using the approach of \cite{b:Li}, which is also followed in \cite{b:GJ}: if a pretzel knot $K$ is slice then its double branched cover $Y$ is the boundary of a rational homology ball $W$ \cite[Lemma~17.2]{b:Ka}. Moreover, up to considering the mirror image of $K$, the $3$--manifold $Y$ is also the boundary of a negative definite $4$--manifold $M$ obtained by plumbing together disc bundles over spheres. We can build a closed, oriented, negative definite, $4$--manifold $X$ as $M_{\cup_Y} (-W)$. By Donaldson's celebrated theorem \cite{b:Do} the intersection form $Q_X$ of $X$ must be diagonalizable and therefore, since $H_2(M;\Z)/\Tors\cong H_2(X;\Z)/\Tors\cong\Z^n$, there must exist a monomorphism $\iota:\Z^n\rightarrow\Z^n$ such that $Q_M(\alpha,\beta)=-\mathrm{Id}(\iota(\alpha),\iota(\beta))$ for every $\alpha,\beta\in\Z^n\cong H_2(M;\Z)/\Tors$. 

The existence of $\iota$ is enough to guarantee sliceness among $2$--bridge knots \cite{b:Li}. In the case of pretzel knots $P(p_1,p_2,p_3)$ with all $p_i$ odd, this obstruction shows that there must exist some $\lambda\in\Z$ such that $-p_3=p_1\lambda^2+p_2(\lambda+1)^2$ \cite[Proposition~3.1]{b:GJ}. However, not all these pretzel knots are slice. In fact, using the Ozsv\'ath--Szab\'o correction terms for rational homology spheres Greene and Jabuka conclude that only $\lambda=0,-1$ result in slice knots.

In our case, for pretzel knots $P(p_1,...,p_n)$ with one even parameter, even in the case of three strands, the existence of $\iota$ is a weaker obstruction to sliceness than in the cases studied in \cite{b:GJ,b:Li}. For example,
contrary to what happens for $2$--bridge knots and pretzel knots of the form $P(p_1,p_2,p_3)$ with all $p_i$ odd, in our case the existence of $\iota$ does not imply that the knot signature is zero. The proof of Theorem~\ref{t:slice} has three main steps: first we determine the pretzel knots with vanishing signature such that $\iota$ exists. Not all the resulting knots are slice. In a second step we use the correction terms from Heegaard-Floer homology to further restrain the family of candidates to slice knots. This leaves us with two one parameters families to further study. The sliceness of one of these families is ruled out using Casson-Gordon invariants while the other one is partially treated studying Alexander polynomials.\\

The rest of the paper is organized as follows. The proof of Corollary~\ref{c:cor} is carried out in Section~\ref{s:cor} assuming Theorem~\ref{t:slice}. An easy algorithm is given to detect ribbon pretzel knots and we show that many of the knots from Theorem~\ref{t:slice} are actually slice. In Section~\ref{s:pre} we recall some properties of the Seifert spaces associated to pretzel knots and of the knot signature. Section~\ref{s:two} deals with the two interesting families of pretzel knots whose properties were described above. Finally, Sections~\ref{s:1step} and \ref{s:step2} treat the general case combining Donaldson's theorem with the knot signature and the correction terms from Heegaard Floer homology.
\\

\begin{quote}
\footnotesize
\textbf{Acknowledgments.} This project began while I was a Ph.D.\ student under the supervision of Paolo Lisca to whom I am very grateful and from whom I still learn so much. I have had the opportunity to discuss this problem with many people who have all contributed to my better understanding of its different aspects: Benjamin Audoux, David Cimasoni, Jose F. Galvan, Jose M. Gamboa, Farshid Hajir, Lukas Lewark, Marco Mazzucchelli, Brendan Owens and Stepan Orevkov. Very special thanks go to Josh Greene and Slaven Jabuka, not only for their inspiring paper on this subject that introduced me to pretzel knots, but especially for their great generosity and the  many hours we have shared thinking about this problem together. 
\end{quote}

\section{Slice pretzel knots}\label{s:cor}

In this section we prove that, for certain orders of the parameters, the knots in Theorem~\ref{t:slice} are actually slice. Moreover, assuming this theorem we prove Corollary~\ref{c:cor} which deals with the slice-ribbon conjecture for $3$-stranded pretzel knots. We start with the following proposition which explains an easy algorithm to determine whether a pretzel knot is ribbon.

\begin{prop}[Ribbon Algorithm]\label{p:alg}
Let $K=P(p_1,...,p_n)$ be a pretzel knot and let $p_{n+1}:=p_1$. While for some $j\in\{1,...,n\}$ it holds $p_j=-p_{j+1}$, we reduce the number of parameters to $n-2$ and repeat with the the knot $P(p_1,...,p_{j-1},p_{j+2},...,p_n)$. If at the end of the sequence of reductions we are left with a pretzel knot with exactly one parameter or with two parameters $a$ and $b$ satisfying $a=-b-1$, then $K$ is ribbon. 
\end{prop}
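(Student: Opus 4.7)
The plan is to prove the proposition by producing, from $K$, an explicit movie of band moves terminating in an unlink, which is by definition a ribbon disk for $K$. The algorithm tells us which band to attach at each step: one per reduction $P(p_1,\dots,p_n)\to P(p_1,\dots,\hat{p}_j,\hat{p}_{j+1},\dots,p_n)$, chosen so that it implements the cancellation of the pair $p_j,-p_{j+1}$.

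I would first verify that the terminal pretzels are unknots. The two parameter case $P(a,-a-1)$ is the $2$-bridge knot of slope $\tfrac{1}{a+(-a-1)}=\tfrac{1}{-1}$, hence the unknot; the one parameter terminal $P(a)$ is likewise unknotted in the only situation in which the algorithm halts there. Both base cases therefore bound the obvious disk in $B^{4}$, providing the seed unlink for the ribbon disk.

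The key step is to realise a single reduction by a band. Given a standard diagram of $P(p_1,\dots,p_n)$ with $p_j=-p_{j+1}$, I would exhibit a band sitting in the pocket between the $p_j$ and $p_{j+1}$ tangles, with its two short sides lying on the top and bottom bridges joining these two tangles. Performing the saddle along this band detaches the two cancelling twist regions from the rest of the diagram: the $p_j$ and $-p_j$ twists combine into a trivial $2$-strand tangle and split off as an unknotted circle $U$, while the outer diagram collapses onto the pretzel $P(p_1,\dots,\hat{p}_j,\hat{p}_{j+1},\dots,p_n)$. Thus a ribbon disk for the reduced pretzel, together with the disk capping $U$ and the band itself, assembles into a ribbon disk for $P(p_1,\dots,p_n)$; induction on the number of reductions finishes the proof.

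The main obstacle will be the diagrammatic verification of the saddle step, namely checking that the band described in the pocket between the two cancelling tangles really does split off a single unknot and leave behind the reduced pretzel rather than some cabled or twisted variant. This reduces to a tangle-calculus computation with the pretzel bridges, carried out separately in the cases $j<n$ and $j=n$ (the latter using the cyclic convention $p_{n+1}=p_1$), and forced in the end by the fact that the $p_j,-p_j$ tangle pair together with the adjacent bridges forms a rationally trivial sub-diagram.
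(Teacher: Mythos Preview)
Your proposal is correct and follows essentially the same approach as the paper: the paper's proof also exhibits a single ribbon move (referring to a figure) that splits off an unknot from a pretzel with adjacent cancelling parameters $p_j=-p_{j+1}$, leaving the reduced pretzel, and then iterates to reach an unlink, noting as you do that the terminal cases $P(a)$ and $P(a,-a-1)$ are unknots. Your verbal description of the band in the pocket between the two cancelling tangles matches the move depicted there.
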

\begin{proof}
On a pretzel knot $K$, whenever there are two adjacent strands $p_1$ and $p_2$ with the same number of crossings but of opposite signs, we can perform the ribbon move shown in Figure~\ref{f:cob_pretzel}, which simplifies the pretzel knot yielding the disjoint union of an unknot and a new pretzel knot $K'$. The knot $K'$ is equal to $K$ without $p_1$ and $p_2$. Therefore, if $n$ is odd and after the sequence of reductions the set of parameters defining $K$ consists of only one integer, we have that after performing $\frac{n-1}{2}$ ribbon moves on $K$ we obtain the disjoint union of $\frac{n+1}{2}$ unknots. Thus, $K$ is ribbon. On the other hand, if $n$ is even and after the sequence of reductions the set of parameters defining $K$ consists of exactly two integers $a$ and $b$ satisfying $b=-a-1$, then after performing $\frac{n}{2}-1$ ribbon moves on $K$, we obtain, since $P(a,-a-1)$ is the unknot, the disjoint union of $\frac{n}{2}$ unknots. Thus again, $K$ is ribbon.
\end{proof}

\begin{cor}\label{c:affirm}
Let $K=P(p_{1},\dots,p_{n})$ be a pretzel knot satisfying the assumptions of Theorem~\ref{t:slice}. Then the above Ribbon Algorithm shows that for certain orderings of the parameters $K$ is slice.
\end{cor}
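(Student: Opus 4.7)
The plan is to verify directly that the canonical orderings furnished by Theorem~\ref{t:slice} are already of the form that the Ribbon Algorithm of Proposition~\ref{p:alg} handles, proceeding by cases on the parity of $n$.

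For $n$ odd, case~(2) of Theorem~\ref{t:slice} gives the ordering $(q_0, q_1, -q_1, q_2, -q_2, \dots, q_{(n-1)/2}, -q_{(n-1)/2})$. Each pair $(q_i, -q_i)$ with $i \geq 1$ is adjacent, so I would iteratively apply the reduction step of Proposition~\ref{p:alg} (say, from the right end), stripping these pairs off one at a time. After $(n-1)/2$ reductions the pretzel is reduced to the one-parameter pretzel $P(q_0)$, and Proposition~\ref{p:alg} concludes that $K$ is ribbon.

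For $n$ even, case~(1) gives the ordering $(q_1, -q_1 \pm 1, q_2, -q_2, \dots, q_{n/2}, -q_{n/2})$. The pairs $(q_i, -q_i)$ for $i = 2, \dots, n/2$ are adjacent and can be removed in the same way, leaving the two-parameter pretzel $P(q_1, -q_1 \pm 1)$. When the sign is $-$, setting $a = q_1$ and $b = -q_1 - 1$ gives $a = -b - 1$, and Proposition~\ref{p:alg} concludes. When the sign is $+$, the residual $P(q_1, -q_1 + 1)$ is, up to a swap of parameters, the mirror image of $P(q_1 - 1, -q_1)$, which fits the algorithm's criterion directly; since ribbon-ness is preserved under mirroring, $K$ is still ribbon.

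There is no real obstacle: the corollary is essentially a bookkeeping consequence of the fact that Theorem~\ref{t:slice} already packages the parameters into adjacent cancelling pairs plus a small unknot remainder explicitly covered by Proposition~\ref{p:alg}. The only modest subtlety is the $+1$ case in part~(1), which I dispose of via the mirror argument above rather than enlarging the statement of the Ribbon Algorithm.
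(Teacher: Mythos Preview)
Your argument is correct and is exactly the routine verification the paper leaves implicit (the paper states Corollary~\ref{c:affirm} without proof). The one genuine point you had to address is the $+1$ subcase in part~(1): there the residual $P(q_1,-q_1+1)$ has $a+b=1$ rather than $a+b=-1$, so Proposition~\ref{p:alg} as literally stated does not terminate with ``ribbon''; your mirror observation fixes this, since $P(q_1,-q_1+1)$ is the mirror of $P(-q_1,q_1-1)$ and hence also the unknot, so the proof of Proposition~\ref{p:alg} still yields a ribbon disc for $K$.
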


\begin{proof}[Proof of Corollary~\ref{c:cor}]
Given $K=P(p_1,p_2,p_3)$ as in the assumptions, if any of $p_1,p_2$ or $p_3$ equals $\pm 1$, then $K$ is a $2$--bridge knot (see \cite[Figure~2]{b:GJ} for a proof) and by \cite[Corollary~1.3]{b:Li}, the slice--ribbon conjecture holds in this case.  
On the other hand, if $|p_i|\geq 2$ for every $i\in\{1,2,3\}$, then the parameters satisfy either $p_1,p_2,p_3\equiv$(mod 2) or there is exactly one even parameter. For the first possibility \cite[Theorem~1.1]{b:GJ} holds and the statement follows. In the second case, if $K=P(p_1,p_2,p_3)$ is slice then Theorem~\ref{t:slice} holds and we obtain that $(p_1,p_2,p_3)$ is of the form $(p,-q,q)$ for some ordering. Since $3$--stranded pretzel knots are independent from the ordering of the parameters, we have $K=P(p,q,-q)$ and Proposition~\ref{p:alg} shows that $K$ is ribbon. 
\end{proof}

\begin{figure}
\begin{center}
\includegraphics[scale=0.7]{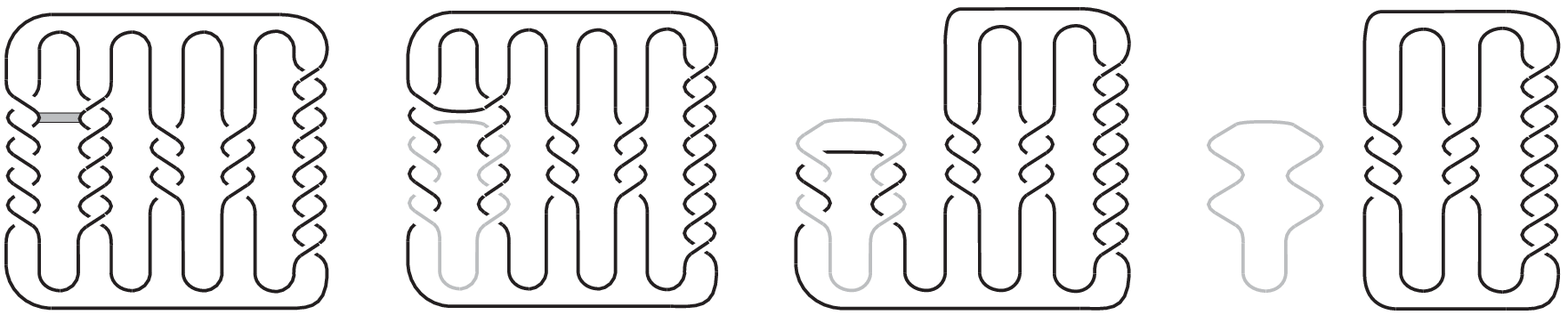}
\hcaption{On a pretzel knot, whenever there are two adjacent strands with the same number of crossings but of opposite sign, we can perform the ribbon move shown in this example obtaining the disjoint union of an unknot and the original pretzel knot with two strands less.}
\label{f:cob_pretzel}
\end{center}
\end{figure}
\begin{rem}
The only $3$-stranded pretzel knots for which the slice-ribbon conjecture remains open are a subset of the family $P(a,-a-2,-\frac{(a+1)^{2}}{2})$. We conjecture that none of these knots are slice and thus that the slice-ribbon conjecture holds for all $3$-stranded pretzel knots.
\end{rem}
Given a pretzel knot $P(p_1,...,p_n)$ with $|p_i|>1$ for all $i$, the necessary condition for sliceness that we establish in this paper, namely that for some permutation $\sigma$ of the parameters the knot $P(p_{\sigma(1)},...,p_{\sigma(n)})$ can be shown to be ribbon using Proposition~\ref{p:alg}, is not sufficient. For instance, in \cite[Section 11]{b:H} it is shown that the mutant $K_1=P(3,5,-3,-5,7)$ of the slice pretzel knot $K_2=P(3,-3,5,-5,7)$ is not slice. Therefore, one may ask  what constraints on the ordering of the parameters $(p_1,...,p_n)$ need to be added in order to obtain  a sufficient condition for sliceness. For all the examples we know (including in particular the knots $K_1$ and $K_2$ above) the ribbon algorithm of Proposition~\ref{p:alg} establishes that the knot $K(p_1,...,p_n)$ is actually ribbon. On the basis of these considerations we propose:
\begin{conj}\label{c:conj} 
The pretzel knots $P(p_1,...,p_n)$ with $|p_i|>1$ for all $i$ that are ribbon are precisely those detected by the algorithm in Proposition~\ref{p:alg}.
\end{conj}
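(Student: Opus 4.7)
The ($\Leftarrow$) direction of Conjecture~\ref{c:conj} is immediate from Proposition~\ref{p:alg}, so all the content is in the converse: every ribbon pretzel knot $K=P(p_1,\dots,p_n)$ with $|p_i|>1$ must be detected by the algorithm. My plan would begin by applying Theorem~\ref{t:slice} to the (slice) knot $K$, which---modulo the exceptional set $\mathcal E$, a difficulty inherited from the present paper---forces the multiset $\{p_1,\dots,p_n\}$ into one of the two canonical forms (1) or (2). In the odd case $(q_0,q_1,-q_1,\dots)$, and in the ``$-$'' subcase of the even form, the canonical ordering given by the theorem is \emph{already} detected by the algorithm, so the remaining question is purely about ordering: one must rule out every ordering of the same multiset which is \emph{not} reducible by the algorithm, i.e.\ every non-reducible mutant.

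The central difficulty is that all obstructions exploited in the proof of Theorem~\ref{t:slice}---Donaldson's theorem, the Heegaard--Floer $d$-invariants, and the Casson--Gordon invariants---factor through the double branched cover of $K$, which is a mutation invariant. Consequently they cannot distinguish, for instance, the non-slice mutant $P(3,5,-3,-5,7)$ of \cite{b:H} from the ribbon knot $P(3,-3,5,-5,7)$. To finish the argument one must invoke mutation-sensitive concordance obstructions. The natural candidates are twisted Alexander polynomials evaluated at characters of the knot group, $L^2$-signatures of the Cha--Ko or Cochran--Orr--Teichner type, higher-order Casson--Gordon invariants on covers beyond the double branched one, and the knot Floer complex (which is known to feel mutation in many cases). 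For each non-algorithmic ordering of a canonical multiset one would then have to produce such an invariant witnessing non-ribbon-ness.

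A parallel, more structural route would be induction on $n$. Given a smooth ribbon disk $D\subset B^4$ for $K$, put $D$ into standard Morse position with respect to the radial function, so that $D$ is built from $b$ minima and $b-1$ saddles (equivalently, band moves) attached to a disjoint union of unknots. One would then try to show that the pretzel structure forces at least one of these bands to join two adjacent strands $p_j$ and $p_{j+1}$ with $p_j=-p_{j+1}$; cutting $K$ along such a band would produce a ribbon disk for the pretzel knot obtained by deleting the pair $(p_j,p_{j+1})$, at which point the induction hypothesis applies and exhibits the first reduction step of the algorithm. The base cases are $n=1$ (unknot) and $n=2$ (a two-bridge knot, handled by \cite{b:Li}).

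I expect the main obstacle to be precisely the step of finding, or ruling out, bands that thread through the pretzel tangles in complicated ways. This is essentially a ribbon-disk uniqueness statement for pretzel knots and appears to be of comparable depth to the conjecture itself; equivalently, the lack of clean, computable, mutation-sensitive slice obstructions tailored to the families in Theorem~\ref{t:slice} is the bottleneck in the algebraic route. It is presumably for this reason that the statement is left as a conjecture in the paper, with the mutant pair $(K_1,K_2)$ above as the smallest informative test case.
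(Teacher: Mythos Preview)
The statement you were asked to prove is a \emph{conjecture} in the paper, not a theorem: the paper gives no proof of it. Your write-up correctly recognizes this in its final paragraph, and the bulk of your text is an honest discussion of why the statement is hard rather than an actual argument. So there is nothing to compare against.

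That said, a few of your claims about the paper's content are slightly off. Theorem~\ref{t:slice} only constrains the \emph{multiset} of parameters (up to reordering), so appealing to it cannot, even in principle, address the ordering question you isolate; you say this yourself two sentences later, but the phrasing ``the canonical ordering given by the theorem is \emph{already} detected by the algorithm'' is misleading since the theorem gives no ordering at all. Also, in the even case, both the ``$+$'' and the ``$-$'' subcases $(q_1,-q_1\pm 1,q_2,-q_2,\dots)$ are detected by the algorithm in the listed order, not just the ``$-$'' subcase. Finally, your inductive strategy via ribbon-disk Morse theory is a reasonable heuristic, but note that the base case $n=2$ with $|p_i|>1$ is not needed for the conjecture as stated (such a $P(a,b)$ is a torus knot and is ribbon only when $a=-b\pm 1$, which the algorithm already handles), and the real difficulty is exactly the band-location step you flag. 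None of this changes the bottom line: the conjecture is open, the paper does not claim a proof, and your proposal is a survey of obstacles rather than a proof.
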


\section{Preliminaries}\label{s:pre}

\subsection{Double branched covers of pretzel knots}

Let $\Gamma$ be a \textbf{plumbing graph}, that is, a graph in which every vertex $v_i$ carries an integer weight $a_i$, $i=1,...,n$. Associated to each vertex $v_i$ is the $4$--dimensional disc bundle $X\rightarrow S^2$ with Euler number $a_i$. If the vertex $v_i$ has $d_i$ edges connected to it in the graph $\Gamma$, we choose $d_i$ disjoint discs in the base of $X\rightarrow S^2$ and call the disc bundle over the $j$th disc $B_{ij}=D^2\times D^2$. When two vertices are connected by an edge, we identify $B_{ij}$ with $B_{k\ell}$ by exchanging the base and fiber coordinates and smoothing the corners. This pasting operation is called \textbf{plumbing} (for a more general treatment we refer the reader to \cite{b:GS}), and the resulting smooth $4$--manifold $M_\Gamma$ is said to be obtained by plumbing according to $\Gamma$. 

The group $H_2(M_\Gamma;\Z)$ has a natural basis represented by the zero-sections of the plumbed bundles. We note that all these sections are embedded $2$--spheres, and they can be oriented in such a way that the intersection form of $M_\Gamma$ will be given by the matrix $Q_\Gamma=(q_{ij})_{i,j=1,...,n}$ with the entries
$$q_{ij}
=
\left\{ 
\begin{array}{cl}
a_i & \mbox{if } i=j;\\
1 & \mbox{if } i \mbox{ is connected to } j \mbox{ by an edge;}\\
0 & \mbox{otherwise}.
\end{array}
\right.
$$ 
We will call $(\Z^n,Q_\Gamma)$ the intersection lattice associated to $\Gamma$.

A \textbf{star--shaped} graph  is a connected tree with a distinguished vertex $v_0$ (called the central vertex) such that the degree of any vertex other than the central one is $\leq 2$. A \textbf{leg} of a star--shaped graph is any connected component of the graph obtained by removing the central vertex. If $\Gamma$ is a star--shaped graph then the boundary $Y_{\Gamma}:=\partial M_{\Gamma}$ is a Seifert space (see \cite{b:Ran} for a proof) with as many singular fibers as legs of the graph $\Gamma$.

Given a pretzel knot $K=P(p_1,...,p_n)$, let $Y(p_{1},...,p_{n})$ denote the $3$--manifold obtained as the $2$--fold cover of $S^3$ branched along $K$. These $3$--manifolds are Seifert fibered spaces with $n$ singular fibers \cite{b:Mo}, which can be described as the boundary of the $4$--manifold obtained by plumbing according to the graph in Figure \ref{f:general}.
\begin{figure}
\begin{center}
\frag{0}{$0$}
\frag{3}{$p_1$}
\frag{5}{$p_n$}
\frag{7}{$p_2$}
\includegraphics[scale=0.9]{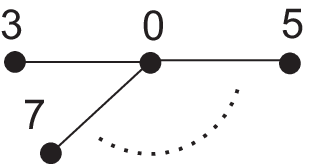}
\ccaption{Plumbing graph of a $4$-manifold with boundary $Y(p_{1},\dots,p_{n})$.}
\label{f:general}
\end{center}
\end{figure}
The order of the first homology group of $Y(p_{1},\dots,p_{n})$ can be computed via the incidence matrix of any graph $\Gamma$ such that $Y_{\Gamma}=Y(p_{1},\dots,p_{n})=\partial M_{\Gamma}$ and it holds (see \cite{b:NR})
\begin{equation}\label{e:det}
|H_{1}(Y_{\Gamma})|=|\det Q_{\Gamma}|=(\sum_{i=1}^{n}\frac{1}{p_{i}})\prod_{i=1}^{n}p_{i}.
\end{equation}
By \cite[Theorem~5.2]{b:NR}, the Seifert space $Y(p_{1},...,p_{n})$ can be written as the boundary of a negative definite $4$--plumbing as long as 
\begin{equation}\label{e:neg}
\frac{1}{p_{1}}+\cdots+\frac{1}{p_{n}}> 0.
\end{equation}
If the inequality holds, there is a \textbf{canonical negative plumbing tree}, which from now on will be denoted $\Psi$, satisfying $Y(p_{1},...,p_{n})=\partial M_\Psi$ and $M_{\Psi}$ is negative definite. All the vertices in $\Psi$ have weight $\leq -2$ except for the central vertex which has weight $\leq -1$. The tree $\Psi$ is obtained as follows: take the graph in Figure~\ref{f:general}: for every $p_i$ such that $p_{i}>1$ substitute its corresponding length--one leg with a $(-2)$--chain with $p_{i}-1$ vertices and subtract $1$ from the weight of the central vertex. In this way, we obtain a new four manifold, which is negative definite, and has the same boundary as before the substitutions. Formally this is done by a series of blow downs and blow ups (see \cite{b:Ne}). An example is shown in Figure~\ref{f:camb_signo}. We call $C_{i}$ the $(-2)$--chain corresponding to the parameter $p_{i}>1$.
\begin{figure}[h]
\begin{center}
\frag{m2}{$-2$}
\frag{m3}{$-4$}
\frag{m4}{$-2$}
\frag{m7}{$-7$}
\frag{m9}{$-9$}
\frag{0}{$0$}
\frag{3}{$3$}
\frag{5}{$5$}
\frag{4}{$4$}
\frag{2}{$2$}
\frag{1}{$1$}
\includegraphics[scale=0.7]{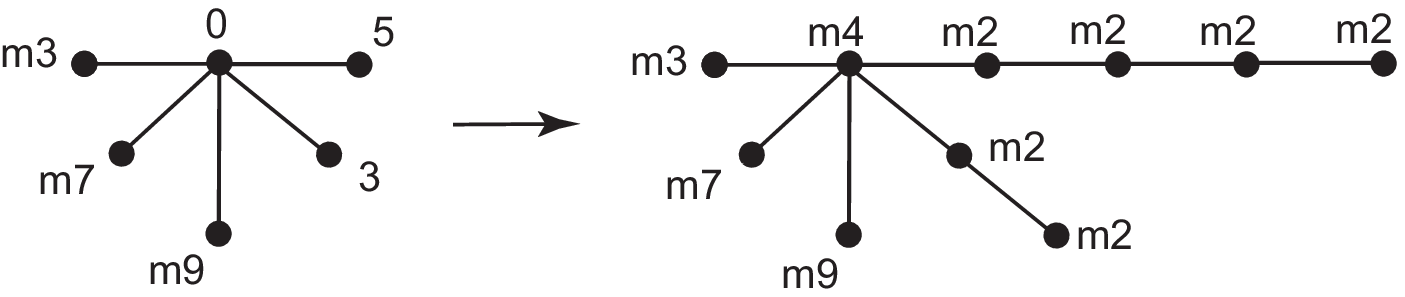}
\hcaption{After a series of blow ups/downs we achieve the canonical negative plumbing of the pretzel knot $P(-4,-7,-9,3,5)$.}
\label{f:camb_signo}
\end{center}
\end{figure}

Recall that if a knot $K$ is slice then its mirror image $\overline K$ is also slice. In the case of pretzel knots we have that for $K=P(p_{1},\dots,p_{n})$ the mirror image satisfies $\overline K=P(-p_{1},\dots,-p_{n})$. Therefore, when studying sliceness of pretzel knots, up to taking mirror images, we can always suppose that the double branched cover is the boundary of a negative definite $4$--plumbing or equivalently that the defining parameters satisfy inequality~\eqref{e:neg}. From now on we will only consider pretzel knots satisfying \eqref{e:neg} and we shall divide them in the following three families
\begin{itemize}
\item[\textbf{(p1)}] $n$ is even and all except one of the $p_{i}$ are odd.
\item[\textbf{(p2)}] $n$ is odd, all except one of the $p_{i}$ are odd and the only even parameter is positive.
\item[\textbf{(p3)}] $n$ is odd, all except one of the $p_{i}$ are odd and the only even parameter is negative.
\end{itemize}

Since the Seifert space $Y=Y(p_{1},...,p_{n})$ does not depend on the order of $p_{1},...,p_{n}$, from now on we adopt the following convention for the ordering and notation of the parameters. We write
$$Y=Y(a_1,...,a_s;c_1,...,c_t),\quad s,t\geq 0,\quad n=s+t,$$
where $a_1,...,a_s<-1$ and $c_1,...,c_t>1$. Note that the central vertex in $\Psi$ has weight $-t$.

We label the vertices of the graph $\Psi$ as follows: the central vertex will be called $v_0$; the vertices corresponding to the negative parameters $a_1,...,a_s$ will be called $v_1,...,v_s$; the vertices of the $(-2)$--chain $C_k$, $k\in\{1,...,t\}$, will be called $v_{1,k},...,v_{c_k-1,k}$, where $v_0$ is connected to $v_{1,k}$ and $v_{j,k}$ is connected to $v_{j+1,k}$ for all $j\in\{1,...,t-1\}$. The number of vertices in $\Psi$, which will be called $m$, coincides with the rank of $H_2(M_{\Psi};\Z)$. It is immediate to check that
$$m=|\Psi|=s+1+\sum_{i=1}^{t}(c_i-1).$$
Let $(\Z^{m},-\mathrm{Id})$ be the standard negative diagonal lattice with the $m$ elements of a fixed basis $\catE$ labeled as $\{e_{j}^k\}_{j,k}$.  As an abbreviation in notation let us write $e_j^k\cdot e_i^\ell$ to denote $-\mathrm{Id}(e_j^k,e_i^\ell)$. If the intersection lattice $(\Z^m,Q_{\Psi})$ admits an embedding $\iota$ into $(\Z^m,-\mathrm{Id})$, then we will omit the $\iota$ in the notation, i.e.\ instead of writing $\iota (v)=\sum_{j,k} x_j^ke_j^k$ we will directly write $v=\sum_{j,k} x_j^ke_j^k$. If $\iota$ exists we will call, for every $S\subseteq \Psi$,
$$U_S:=\{e_j^k\in\catE\,|\, e_j^k\cdot v\neq 0\mbox{ for some }v\in S\}.$$

\subsection{Signature of pretzel knots}\label{s:muu}

Let $K=P(p_1,...,p_n)$ be a pretzel knot and let $Y_\Gamma=\partial M_\Gamma$ be its double branched cover described as the boundary of a $4$--dimensional plumbing manifold. Since $K$ is a knot the determinant of the intersection form $Q_\Gamma$ is odd and the equation
$$Q_{\Gamma}(w,x)\equiv Q_{\Gamma}(x,x)\,(\mathrm{mod}\ 2)\qquad \forall x\in H_2(M_{\Gamma};\Z)$$
has exactly one solution in $H_2(M_\Gamma;\Z_2)$. This solution admits a unique integral lift $w\in H_2(M_\Gamma;\Z)$ such that its coordinates are $0$ or $1$ in the natural basis of $H_2(M_{\Gamma};\Z)$ given by the vertices $v_{1},...,v_{m}$ of the graph $\Gamma$. The homology class $w$ is called the  \textbf{Wu class}. There is a well defined subset $J\subset\{1,...,m\}$ such that
$$w=\sum_{j\in J} v_{j}\in H_2(M_\Gamma;\Z)$$
and we define the \textbf{Wu set} as $\{v_j\in\Gamma\,|\,j\in J\}$. In order to calculate the Wu set for a given plumbing graph $\Gamma$ with odd determinant, we apply the algorithm described in \cite[Theorem~7.1]{b:NR}: start by reducing the graph $\Gamma$ to a collection (possibly empty) of isolated points with odd weights by a sequence of moves of type $1$ and $2$ below. Consider a leaf $v\in\Gamma$ connected to the vertex $u\in\Gamma$.
\begin{itemize}
\item Move 1: If the weight on $v$ is even, then erase $v$ and $u$ from $\Gamma$.
\item Move 2: If the weight of $v$ is odd, then erase $v$ and change the parity of the weight on $u$.
\end{itemize}
In order to determine which vertices belong to the Wu--set we undo the sequence of movements starting with the isolated vertices until we reobtain $\Gamma$, taking the following into account:
\begin{itemize}
\item All the isolated vertices with odd weight obtained in the final step of the reduction of $\Gamma$ belong to the Wu set.
\item If we undo Move $1$, then the vertex $u$ does not belong to the Wu set whereas the vertex $v$ will belong to the Wu set only if the weight on the vertex $u$ and the number of adjacent vertices to $u$ which already belong to the Wu set do not have the same parity. 
\item If we undo Move $2$, then the vertex $v$ will belong to the Wu set if and only if $u$ does not belong to the Wu set.
\end{itemize}
In Figure~\ref{f:Wuset} the encircled vertices form the Wu--set of the canonical negative plumbing graphs associated to pretzel knots in families \textbf{(p1)}, \textbf{(p2)} and \textbf{(p3)}. In family \textbf{(p2)} we assume that the only even parameter is $c_t$, while in family \textbf{(p3)} the only even parameter is $a_1$.
\begin{figure}
\begin{center}
\frag[ss]{Ct1}{$C_{1}$}
\frag[ss]{Ct2}{$C_{2}$}
\frag[ss]{Ctn}{$C_t$}
\frag[ss]{a1ae}{$a_{1}\equiv 0$ (mod 2)}
\frag[ss]{Ctnb}{$C_t,\ c_{t}\equiv 0$ (mod 2)}
\frag{p2}{\textbf{(p1)}}
\frag{p3}{\textbf{(p3)}}
\frag{p2b}{\textbf{(p1)}}
\frag{p3b}{\textbf{(p2)}}
\frag[ss]{m2}{$-2$}
\frag[ss]{nt}{$\!-t$}
\frag[ss]{nt1}{$\!-t$}
\frag[ss]{a1}{$a_{1}$}
\frag[ss]{a2}{$a_{2}$}
\frag[ss]{at}{$a_{s}$}
\includegraphics[scale=0.55]{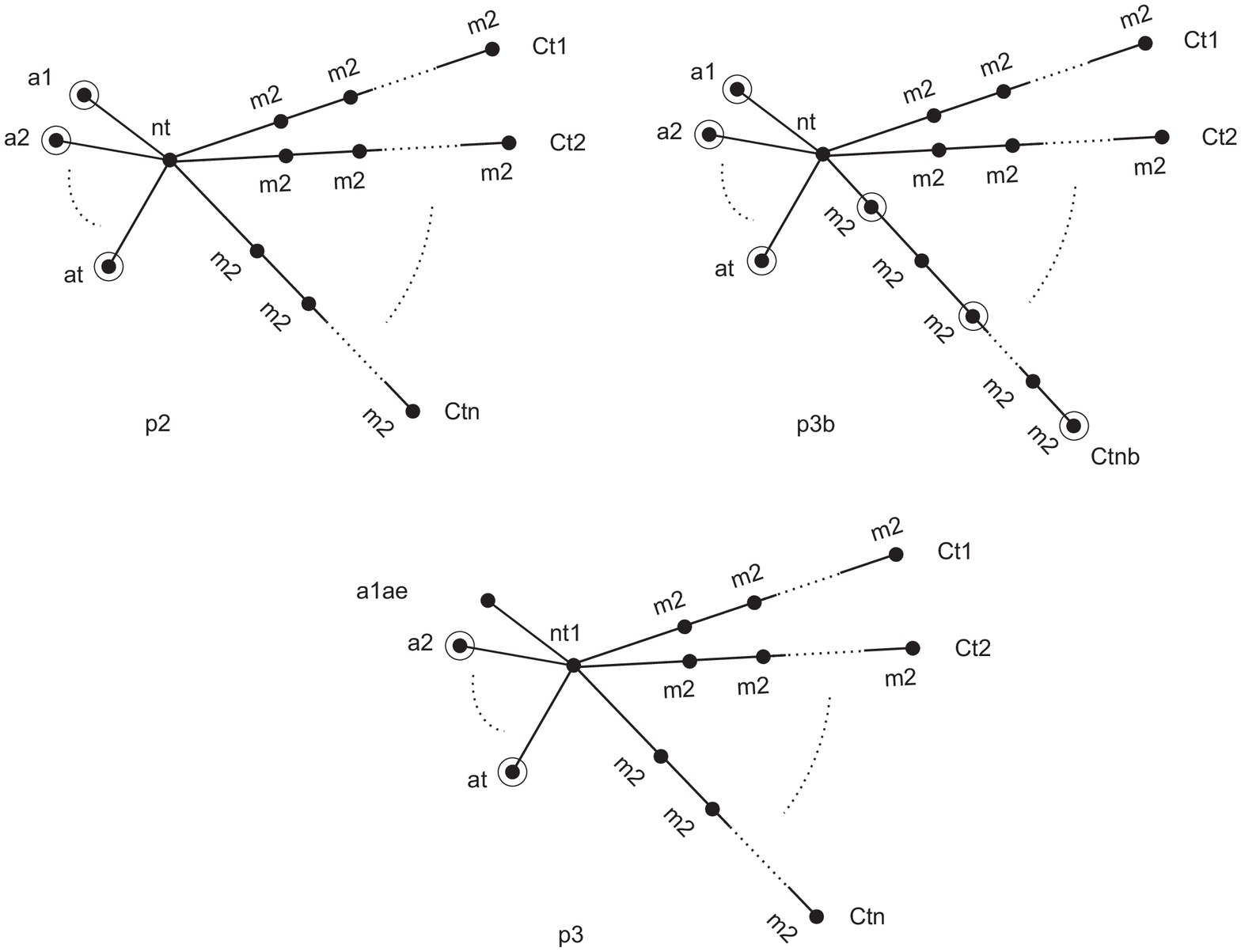}
\hcaption{Wu--set on canonical negative plumbing graphs corresponding to the pretzel knots in families {\textbf{(p1)}}, {\textbf{(p2)}} and {\textbf{(p3)}}.}
\label{f:Wuset}
\end{center}
\end{figure}

The following formula, due to Saveliev \cite[Theorem~5]{b:Sa}, expresses the signature of the pretzel knot $P(p_1,...,p_n)$ as
\begin{align}\label{e:Sa}
\sigma(P(p_1,...,p_n))=\mathrm{sign}(Q_{\Gamma})-w\cdot w,
\end{align}
where $w\cdot w$ stands for $Q_{\Gamma}(w,w)$. Notice that the expression $\mathrm{sign}(Q_{\Gamma})-w\cdot w$ equals $\bar\mu(Y_\Gamma)$, where $\bar\mu$ is Neumann's invariant. It is well known, \cite[Theorem~8.3]{b:Ka}, that slice knots have vanishing signature. In Section~\ref{s:1step}, we will find constraints on the parameters defining the Seifert spaces $Y$ which arise as double branched covers over slice pretzel knots combining equality \eqref{e:Sa} with Donaldson's theorem on the intersection form of definite $4$-manifolds. 

\section{Two interesting families of pretzel knots}\label{s:two}
In this section we study the sliceness of two subfamilies of pretzel knots of type \textbf{(p3)}. Both of them stem from the detailed general study on pretzel knots developed in the remaining sections but need specific arguments to show that the knots in these families are not slice. In the first subsection we study the family $P(a,-a-2,-a-\frac{a^2+9}{2})$ with $a\geq 3$ odd. We shall prove that the double branched covers of these knots do not bound rational homology balls, in spite of the fact that the obstruction given by the $d$-invariants from Heegaard-Floer homology (see Section~\ref{s:d}) vanishes. The main tool we use are the Casson-Gordon invariants.
The second subsection deals with the family $P(a,-a-2,-\frac{(a+1)^{2}}{2})$ with $a\geq 3$ odd. A major difficulty presented by this family is that the double branched covers of these knots are all integer homology spheres. As further explained in Section~\ref{s:weird} many of the recent obstructions to knot sliceness defined from Heegard-Floer homology or Khovanov homology vanish for this family while, perhaps surprisingly, the Alexander polynomial is able to detect the non-sliceness of many (perhaps all) the knots in this family.

\subsection{Casson-Gordon invariants and the family $P(a,-a-2,-a-\frac{a^2+9}{2})$ }\label{s:step3}

In 1975 Casson and Gordon introduced some knot invariants that allowed to show that not all algebraically slice knots are smoothly slice. The invariants depend on a knot $K\subset S^{3}$ and on the choice of a character $\chi$ defined on the first homology group of the double branched cover of $K$. In \cite{b:CG} two different invariants, denoted $\sigma(K,\chi)$ and $\tau(K,\chi)$, are defined from the difference of the twisted signatures of some 4-manifolds associated to the couple $(K,\chi)$. We shall only deal with the properties of a specific version of $\sigma(K,\chi)$ that suits our purposes.

Consider a slice knot $K\subset S^{3}$ with double branched cover $Y$. Let $p$ be a prime, $r\in\N$ and $\chi:H_{1}(Y;\Z)\rightarrow\Z_{p^{r}}$ be a character of order $p^{r}$. Suppose that the covering $\widetilde Y$ induced by the character satisfies $H_{1}(\widetilde Y;\Q)=0$. Let $W_{D}$ be the double cover of $B^{4}$ branched over a slicing disc for $K$ and $V$ be the kernel of the map $i_{*}:H_{1}(Y;\Z)\rightarrow H_{1}(W_{D};\Z)$ induced by the inclusion.

\begin{thm}[Casson-Gordon]\label{t:CG} 
With the above assumptions, for every character $\chi$ of prime power order vanishing on $V$ we have $|\sigma(K,\chi)|\leq 1$.
\end{thm}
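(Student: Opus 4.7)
The plan is to use $W_D$ itself as the bounding $4$-manifold in the Casson--Gordon recipe for $\sigma(K,\chi)$, and to exploit that $W_D$ is a rational homology ball.

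The first step is to extend $\chi$ to a character $\widetilde\chi\colon H_1(W_D;\Z)\to\Z_{p^r}$. Since $\chi$ vanishes on $V=\ker i_*$, it descends to the image $i_*H_1(Y;\Z)\subseteq H_1(W_D;\Z)$. Because $W_D$ is a rational homology ball, $H_1(W_D;\Z)$ is finite and $V$ is in fact a metabolizer of the $\mathbb{Q}/\mathbb{Z}$-valued linking form on $H_1(Y;\Z)$ (a half-lives/half-dies argument applied to the pair $(W_D,Y)$). This forces the restriction map $\mathrm{Hom}(H_1(W_D;\Z),\Z_{p^r})\to\mathrm{Hom}(i_*H_1(Y;\Z),\Z_{p^r})$ to be surjective, so $\chi$ lifts to the desired $\widetilde\chi$. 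Geometrically this produces a cyclic $p^r$-fold cover $\widetilde W_D\to W_D$ whose restriction to the boundary is the given cover $\widetilde Y\to Y$.

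With $\widetilde\chi$ in hand, the Casson--Gordon recipe gives
$$\sigma(K,\chi)=\sigma_{\widetilde\chi}(W_D)-\sigma(W_D),$$
where $\sigma_{\widetilde\chi}(W_D)$ is the signature of the twisted intersection form on $H_2(W_D;\mathbb{C}_{\widetilde\chi})$. Because $W_D$ is a rational homology ball, $\sigma(W_D)=0$, and the problem reduces to bounding $|\sigma_{\widetilde\chi}(W_D)|$ by $1$.

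The third and most delicate step is a Lagrangian argument. A twisted Euler characteristic computation using $\chi(W_D)=1$, the vanishing of $H_*(W_D;\mathbb{Q})$ in positive degrees, and the vanishing of $H_0(W_D;\mathbb{C}_{\widetilde\chi})$ (valid whenever $\widetilde\chi$ is nontrivial) pins down $\dim_{\mathbb{C}}H_2(W_D;\mathbb{C}_{\widetilde\chi})$ up to a single leftover dimension. The hypothesis $H_1(\widetilde Y;\mathbb{Q})=0$ kills $H_*(\partial W_D;\mathbb{C}_{\widetilde\chi})$ in low degrees, so the long exact sequence of the pair together with Poincar\'e--Lefschetz duality with twisted coefficients shows that the image of $H_2(W_D;\mathbb{C}_{\widetilde\chi})\to H_2(W_D,\partial W_D;\mathbb{C}_{\widetilde\chi})$ is a Lagrangian subspace of the twisted intersection form, of dimension within $1$ of half the total. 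The estimate $|\sigma_{\widetilde\chi}(W_D)|\leq 1$ then follows. The main obstacle is precisely this last step: carefully tracking the several twisted homology groups and verifying isotropy of the boundary image is the technical core of \cite{b:CG}, and the hypothesis $H_1(\widetilde Y;\mathbb{Q})=0$ is essential to prevent boundary contributions from inflating the bound.
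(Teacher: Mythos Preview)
The paper does not prove this theorem; it is quoted from Casson and Gordon's original work \cite{b:CG} and used as a black box. There is therefore no proof in the paper to compare against.

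Your sketch follows the broad strategy of Casson and Gordon: extend the character over the rational homology ball $W_D$, compute $\sigma(K,\chi)$ as a twisted signature defect for $W_D$, and then bound the twisted signature using the smallness of the twisted homology of $W_D$. That is the right architecture. However, the final step is not quite as you describe it. The Euler characteristic identity $\chi(W_D)=1$ together with $H_0(W_D;\mathbb{C}_{\widetilde\chi})=0$ only yields $-\dim H_1+\dim H_2-\dim H_3=1$; to conclude that $\dim H_2(W_D;\mathbb{C}_{\widetilde\chi})\le 1$ one must also kill $H_1$ and $H_3$ with twisted coefficients, and this is where the hypothesis $H_1(\widetilde Y;\mathbb{Q})=0$ and the fact that $W_D$ is a rational homology ball are genuinely used (in Casson--Gordon's argument, via showing that the $p^r$-fold cover $\widetilde W_D$ is itself close to a rational homology ball). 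Your ``Lagrangian of dimension within $1$ of half the total'' phrasing is not the mechanism either: the actual point is that the nondegenerate part of the twisted intersection form has rank at most $1$, so its signature is bounded by $1$ in absolute value. The outline is sound, but the bookkeeping in the last paragraph would need to be redone to make the argument go through.
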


We shall compute the Casson-Gordon invariants of some of the pretzel knots in family \textbf{(p3)} via the formula given in \cite[Theorem~6.7]{b:CiF}. This formula computes $\sigma(K,\chi)$ from a surgery presentation of $Y$ regarded as a colored link and the term $\sigma_{L}(\omega)$ stands for the colored signature of $L$.

\begin{thm}[Cimasoni-Florens]\label{t:CF}
Let $Y$ be the 3-manifold obtained by surgery on a framed link $L$ with $m$ components and linking matrix $Q$. Let $\chi:H_{1}(M;\Z)\rightarrow\Z_{p^{r}}$ be the character mapping the meridian $\mu_{i}$ of the $i$-th component of $L$ to $n_{i}$ with $1\leq n_{i}<p^{r}$ and $n_{i}$ coprime to $p$. Consider $L$ as an $m$-colored link and set $\omega=(n_{1},\dots,n_{m})$. Then,
\begin{equation}\label{formula}
\sigma(K,\chi)=\sigma_L(\omega)-\sum_{i<j}Q_{i,j}-\mathrm{sign}\,Q+\frac{2}{p^{2r}}\sum_{i,j}(p^{r}-n_i)n_jQ_{ij}
\end{equation}
\end{thm}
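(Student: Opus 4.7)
The plan is to realise $(Y,\chi)$ as the boundary of a convenient 4-manifold obtained by gluing a 2-handle cobordism to a cyclic branched cover of $D^{4}$ associated to a C-complex for $L$, and to read off $\sigma(K,\chi)$ from Wall's non-additivity formula applied to this decomposition.

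First I would set up the cover side. Choose a C-complex $F=F_{1}\cup\cdots\cup F_{m}$ for the $m$-coloured link $L\subset S^{3}$, push it into the interior of $D^{4}$ to obtain a properly embedded surface $\Sigma$, and let $V_{\omega}\to D^{4}$ be the $p^{r}$-fold cyclic branched cover determined by the weights $\omega=(n_{1},\dots,n_{m})$. By the very definition of $\sigma_{L}(\omega)$ in the Cimasoni--Florens framework, the twisted signature of $V_{\omega}$ computes $\sigma_{L}(\omega)$, up to an explicit correction involving the self- and mutual intersections of $\Sigma$ (and hence only the off-diagonal $Q_{ij}$ and the integers $n_{i}$).

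Next I would set up the 2-handle side. Let $W_{L}$ be the 4-manifold obtained from $D^{4}$ by attaching a 2-handle along each component $L_{i}$ with framing $Q_{ii}$; then $\partial W_{L}=Y$, $\operatorname{sign}(W_{L})=\operatorname{sign} Q$, and the cores of the 2-handles realise $Q$ as the intersection form. Since $H_{1}(W_{L})=0$, the character $\chi$ does not extend over $W_{L}$; to remedy this I would remove open tubular neighbourhoods of the cocores of those 2-handles with $n_{i}\neq 0$ and glue in $V_{\omega}$ along the resulting boundary, which meets $S^{3}=\partial D^{4}$ precisely in a tubular neighbourhood of $L$. The resulting 4-manifold $X$ still has $\partial X=Y$ but now carries a $p^{r}$-cyclic cover $\widetilde{X}$ compatible with $\chi$, so
\[
\sigma(K,\chi)=\operatorname{sign}^{\omega}(\widetilde{X})-\operatorname{sign}(X).
\]

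Finally, I would apply Wall non-additivity to the decomposition $X=W_{L}^{\circ}\cup V_{\omega}$ (and to the induced decomposition of $\widetilde{X}$) to split each of the two signatures into three pieces: the contribution of the 2-handle part, the contribution of the C-complex cover, and a Maslov triple index at the gluing. The 2-handle part contributes $-\operatorname{sign} Q$; the C-complex part contributes $\sigma_{L}(\omega)$; the Maslov/linking correction, after a direct calculation on $H_{1}$ of the gluing region (using that one full positive twist between strands weighted $n_{i}$ and $n_{j}$ contributes $-2(p^{r}-n_{i})n_{j}/p^{2r}$ to the equivariant signature at a character of order $p^{r}$), assembles into $-\sum_{i<j}Q_{ij}+\frac{2}{p^{2r}}\sum_{i,j}(p^{r}-n_{i})n_{j}Q_{ij}$. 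Adding the three contributions yields formula~(\ref{formula}). The hard part will be the careful bookkeeping of the Maslov index on the cover: one must identify the Lagrangians in $H_{1}$ of the gluing torus induced by the inclusions into the two pieces and evaluate the twisted triple form at $\omega$. A secondary technical point is verifying that the right-hand side is independent of the chosen C-complex $F$, which reduces to checking invariance under the $S$- and $T$-moves on C-complexes.
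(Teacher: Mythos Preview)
This theorem is not proved in the paper at all: it is quoted verbatim as \cite[Theorem~6.7]{b:CiF} and used as a black box to compute $\sigma(K_a,\chi)$ for the family $P(a,-a-2,-a-\tfrac{a^2+9}{2})$. There is therefore no ``paper's own proof'' to compare your proposal against.

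That said, your outline is broadly in the spirit of how Cimasoni and Florens actually establish the formula in their paper: build a $4$-manifold with boundary $(Y,\chi)$ from a branched cover over a C-complex together with the $2$-handlebody of the surgery, and extract the signature defect via Wall non-additivity. Two caveats if you intend to flesh this out. First, in the original argument the branched cover is taken over the $2$-handle cores capped off in a specific way, not over a pushed-in C-complex glued to a drilled $W_L$; your gluing $X=W_L^\circ\cup V_\omega$ is not quite the decomposition they use, and you would need to check carefully that the boundaries really match and that the character extends as you claim. Second, your assertion that ``one full positive twist between strands weighted $n_i$ and $n_j$ contributes $-2(p^r-n_i)n_j/p^{2r}$'' is exactly the crux of the computation and cannot be taken for granted; in the original proof this comes from an explicit evaluation of the $G$-signature (or equivalently the Atiyah--Singer defect) on a model piece, not from a Maslov index calculation per se. So the ``hard part'' you flag is genuinely hard and is where most of the content lies.
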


In order to use this formula to compute the Casson-Gordon invariants for the pretzel knots $K_{a}=P(a,-a-2,-a-\frac{a^2+9}{2})$, $a>1$ odd, we will use the surgery presentation of the Seifert space $Y_{a}=Y(a,a-2,-a-\frac{a^2+9}{2})$ given by the framed link $L_{a}$ in Figure~\ref{f:surgery}. It is obtained from the diagram associated to the canonical negative plumbing tree by blowing down the central vertex and subsequently blowing down every new $-1$-framed unknot.
\begin{figure}
\centering
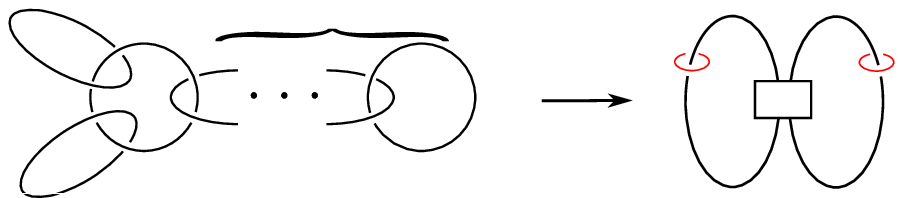
\hcaption{The diagram on the left corresponds to the negative definite plumbing manifold bounded by the Seifert space $Y_{a}=Y(a,-a-2,-a-\tfrac{a^2+9}{2})$. After a series of blow downs we obtain the diagram on the right that we shall use to compute the Casson-Gordon invariants. In the diagram $L_{a}$ the two meridians, generators of $H_{1}(S^{3}\setminus L_{a};\Z)$, are depicted in red and the box labelled $a$ stands for $a$ right handed full crossings between the strands entering the box.}
\label{f:surgery}
\end{figure}
From this surgery presentation of $Y_{a}$ we can read the following presentation of its first homology group:
$$H_{1}(Y_{a};\Z)=\langle\mu_{1},\mu_{2}|-\tfrac{a^{2}+9}{2}\mu_{1}+a\mu_{2},a\mu_{1}-2\mu_{2}\rangle=\langle\mu_{1},\mu_{2}|\tfrac{a+9}{2}\mu_{1}-\mu_{2},9\mu_{1}\rangle.$$
It follows that $H_{1}(Y_{a};\Z)\cong\Z_{9}$ with $\mu_{1}$ being a generator of the group and $\mu_{2}=\tfrac{a+9}{2}\mu_{1}$. We now proceed to define a character on $H_{1}(Y_{a};\Z)$ vanishing on the subgroup $V$, which is cyclic of order $3$ generated by $3\mu_{1}$ (cf. Remark~\ref{r:order}). The character $\chi:H_{1}(Y_{a};\Z)\rightarrow\Z_{3}$ such that $\chi(\mu_{1})=1$, where $1$ is a generator of $\Z_{3}$, has the desired properties.

We have all the necessary ingredients to compute $\sigma(K_{a},\chi)$ via the formula~\eqref{formula}. Notice however that if $a\equiv 0$(mod 3) then the above setting yields $\chi(\mu_{2})=0$ which is not a valid value for $n_{2}$ in the hypothesis of Theorem~\ref{t:CF}. Therefore we shall first compute $\sigma(K_{a},\chi)$ for $a\not\equiv 0$(mod 3) and later we will use yet another surgery presentation of $Y_{a}$ to deal with this remaining case.

The linking matrix for the link $L_{a}$ in Figure~\ref{f:surgery} is given by
$$
Q_{a}=
\cbra{
\begin{matrix}
-\tfrac{a^{2}+9}{2} & a \\
a & -2
\end{matrix}
}
$$
and formula~\eqref{formula} yields
\begin{align}
\sigma(K_{a},\chi)= & \sigma_{L_{a}}(1,n_{2})-a+2+\frac{2}{9}\left(-\tfrac{a^{2}+9}{2}2+a2n_{2}+a(3-n_{2})-2(3-n_{2})n_{2})\right)\notag\\
\label{calculo}
=& \sigma_{L_{a}}(1,n_{2})-a+2+\frac{2}{9}\left(-a^{2}-13+a(n_{2}+3)\right),
\end{align}
where $n_{2}$ has two possible values, $1$ and $2$, depending on whether $a\equiv 2$(mod 3) or $a\equiv 1$(mod 3). 

Our aim is to show that the knots $K_{a}$ are not slice and to do so it suffices to estimate $|\sigma(K_{a},\chi)|$ in \eqref{calculo} and show it is greater than 1. Indeed, all the assumptions of Theorem~\ref{t:CG} are satisfied since $H_{1}(Y_{a};\Z)$ is cyclic and the character $\chi$ is of order $3$, which implies that the first rational homology of the induced covering vanishes \cite[Lemma~4.4]{b:CG2}. The term $\sigma_{L_{a}}(1,n_{2})$ is the coloured signature of the link $L_{a}$ and it is defined as the signature of a $n\times n$ hermitian matrix where $n$ is the rank of the first homology group of a $C$-complex for $L_{a}$ (see \cite{b:CiF} for the definitions and details). The $C$-complex for $L_{a}$ depicted in Figure~\ref{f:Ccomplex} allows us to determine $\sigma_{L_{a}}(1,n_{2})\in\{-a+1,a-1\}$. 
\begin{figure}
\centering
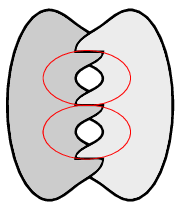
\hcaption{This figure presents a $C$-complex for $L_{3}$ with a basis of its first homology depicted in red. The evident generalisation to the links $L_{a}$ shows that the order of the first homology of the $C$-complex is $a-1$ and therefore $\sigma_{L_{a}}\in\{-a+1,a-1\}$.}
\label{f:Ccomplex}
\end{figure}
Since the term $\left(-a^{2}-13+a(n_{2}+3)\right)$ is negative for all $a$ and $\sigma_{L_{a}}(1,n_{2})-a+2\leq 1$ we have the following estimate
\begin{equation*}
|\sigma(K_{a},\chi)|\geq \frac{2}{9}\left(a^{2}+13-a(n_{2}+3)\right)-1>1\quad\mbox{for all }a>1\mbox{ odd},\ a\not\equiv 0\mbox{(mod 3).}
\end{equation*}
This last inequality shows that the knots $K_{a}$ with $a\not\equiv 0$(mod 3) are not slice. 

To deal with the remaining case, the knots $K_{a}$ with $a\equiv 0$(mod 3), we shall use the surgery presentation of $Y_{a}$ given in Figure~\ref{f:alternativa}. 
\begin{figure}
\centering
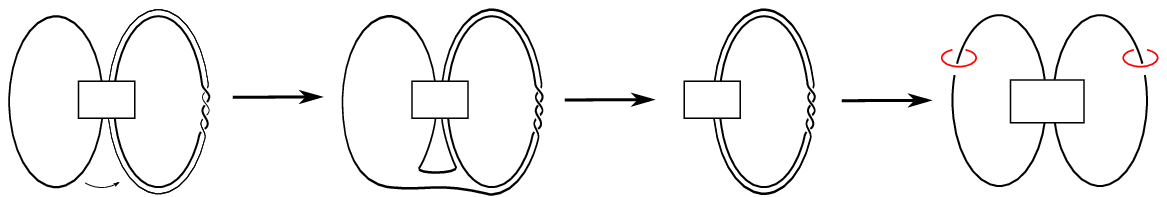
\hcaption{Starting with the link $L_{a}$, in which the thin curve represents the framing of the right component, we preform the indicated handle slide to obtain the second diagram. After a series of isotopies we obtain the link $\tilde L_{a}$, which is a suitable surgery presentation of $Y_{a}$ to compute the Casson-Gordon invariants of $K_{a}$ when $a\equiv 0$(mod 3). Again the red curves in the last diagram are the meridians generating $H_{1}(S^{3}\setminus\tilde L_{a};\Z)$.}
\label{f:alternativa}
\end{figure}
It is obtained from the diagram in Figure~\ref{f:surgery} by sliding the left handle over the handle with framing $-2$. The new framings are obtained applying the rules of Kirby calculus (see \cite{b:GS} for details). This surgery diagram yields the following presentation of the first homology group of $Y_{a}$:
$$H_{1}(Y_{a};\Z)=\langle\mu_{1},\mu_{2}|-\tfrac{(a+2)^{2}+9}{2}\mu_{1}-(a+2)\mu_{2},-(a+2)\mu_{1}-2\mu_{2}\rangle=\langle\mu_{1},\mu_{2}|\tfrac{a-7}{2}\mu_{1}+\mu_{2},9\mu_{1}\rangle.$$
As before $H_{1}(Y_{a};\Z)\cong\Z_{9}$ is generated by $\mu_{1}$ and this time we have $\mu_{2}=\tfrac{7-a}{2}\mu_{1}$. We choose the character $\chi:H_{1}(Y_{a};\Z)\rightarrow\Z_{3}$ defined by $\chi({\mu_{1}})=1$ which satisfies the assumptions of Theorem~\ref{t:CG} and since $\chi(\mu_{2})=2$ we can use the formula~\eqref{formula} to compute $\sigma(K_{a},\chi)$. This time the link $\tilde L_{a}$ of the surgery presentation in Figure~\ref{f:alternativa} has linking matrix
$$
Q_{a}=
\cbra{
\begin{matrix}
-\tfrac{(a+2)^{2}+9}{2} & -a-2 \\
-a-2 & -2
\end{matrix}
}
$$
and the formula reads
\begin{align*}\label{calculo2}
\sigma(K_{a},\chi)= & \sigma_{\tilde L_{a}}(1,2)+a+2+2+\frac{2}{9}\left(-\tfrac{(a+2)^{2}+9}{2}2-(a+2)4-(a+2)-4\right)=\\
& \sigma_{\tilde L_{a}}(1,2)+a+4-\frac{2}{9}\left(27+9a+a^{2})\right).
\end{align*}
Similar arguments to the ones used before allow us to estimate
$$
|\sigma(K_{a},\chi)|\geq -2a-5+6+2a+\tfrac{2a^{2}}{9}=\tfrac{2a^{2}}{9}+1>1
$$
and therefore by Theorem~\ref{t:CG} we have the following statement. 

\begin{thm}
For all odd $a>1$ the knots in the family $P(a,-a-2,-a-\frac{a^2+9}{2})$ are not slice. 
\end{thm}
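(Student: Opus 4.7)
The strategy is to apply Casson--Gordon's sliceness obstruction (Theorem~\ref{t:CG}) to the double branched cover $Y_a$ of $K_a = P(a, -a-2, -a-\frac{a^2+9}{2})$: if $K_a$ were slice, then every prime-power-order character on $H_1(Y_a;\Z)$ vanishing on the metabolizer $V$ would satisfy $|\sigma(K_a,\chi)| \leq 1$, so it suffices to exhibit a character violating this bound.

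First I would produce a convenient surgery presentation of $Y_a$. Starting from the canonical negative plumbing of Section~\ref{s:pre} and blowing down the central vertex (and any newly created $-1$-framed unknots), one obtains a two-component framed link $L_a$ in $S^3$ whose linking matrix $Q_a$ presents $H_1(Y_a;\Z)$ in terms of the meridians $\mu_1,\mu_2$. A direct computation should reveal $H_1(Y_a;\Z) \cong \Z_9$; consistent with the necessary condition that a slice knot's double branched cover has first homology of square order, the metabolizer $V$ must be the unique subgroup of order $3$. Any character $\chi \colon H_1(Y_a;\Z) \to \Z_3$ then automatically vanishes on $V$, and since $\chi$ has prime order the rational-homology hypothesis on the induced cover is automatic (see \cite{b:CG2}).

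Next I would invoke the Cimasoni--Florens formula (Theorem~\ref{t:CF}) to evaluate $\sigma(K_a,\chi)$ from $L_a$. The required inputs are $Q_a$, the vector $\omega=(n_1,n_2)$ recording the images of the meridians under $\chi$, and the colored signature $\sigma_{L_a}(\omega)$. An explicit $C$-complex for $L_a$ should bound the rank of its first homology, and hence $|\sigma_{L_a}(\omega)|$, linearly in $a$. On the other hand, the quadratic entry $-\tfrac{a^2+9}{2}$ of $Q_a$ forces the arithmetic correction term $\tfrac{2}{9}\sum(3-n_i)n_j Q_{ij}$ to behave like $-a^2$. This quadratic-versus-linear mismatch yields $|\sigma(K_a,\chi)| > 1$ for all admissible $a$, contradicting Theorem~\ref{t:CG}.

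The main obstacle is the coprimality condition in Theorem~\ref{t:CF}, which requires each $n_i$ to be nonzero modulo the order of $\chi$. When $a \equiv 0 \pmod 3$ the natural meridian $\mu_2$ of $L_a$ has image $0 \in \Z_3$, so the formula cannot be applied directly. I would handle this residue class by modifying the surgery diagram via a Kirby handle slide to produce a new two-component link $\tilde L_a$ with the same underlying three-manifold $Y_a$ but with relabelled meridians whose images under $\chi$ are both coprime to $3$, and then rerun the Cimasoni--Florens computation on $\tilde L_a$ with its new linking matrix. A quadratic lower bound of the same shape should emerge, closing the remaining case and proving that no $K_a$ is slice.
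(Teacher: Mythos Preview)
Your proposal is correct and follows essentially the same route as the paper: the same Casson--Gordon obstruction via the Cimasoni--Florens formula applied to a two-component surgery link obtained by blowing down the plumbing, the same identification $H_1(Y_a)\cong\Z_9$ with metabolizer the order-$3$ subgroup, the same linear bound on the colored signature from a $C$-complex versus the quadratic growth of the correction term, and the same handle-slide fix for the residue class $a\equiv 0\pmod 3$ where one meridian would otherwise map to $0\in\Z_3$.
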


\begin{rem}\label{r:ratballs}
In fact we have shown something stronger than the non sliceness of the pretzel knots in the family $P(a,-a-2,-a-\frac{a^2+9}{2})$: the arguments in the proof of Theorem~\ref{t:CG} imply that the Seifert spaces $Y_{a}=Y(a,-a-2,-a-\frac{a^2+9}{2})$ do not bound rational homology balls. 
\end{rem}

\subsection{Alexander polynomials and the family $P(a,-a-2,-\frac{(a+1)^{2}}{2})$}\label{s:weird}
This section is devoted to the study of the sliceness of the pretzel knots of the form $P_{a}=P(a,-a-2,-\frac{(a+1)^{2}}{2})$ with $a\geq 3$ odd. All the knots in this family have determinant $1$ and therefore the double branched covers $Y_{a}$ are integer homology spheres. It follows that the Casson-Gordon invariants cannot be used to study the existence of rational homology balls bounded by the Seifert manifolds $Y_{a}$. We will pursue the study of the sliceness of the knots $P_{a}$ leaving open the question of the existence of rational homology balls bounded by their double branched covers.

There are several well-known obstructions to sliceness that we have computed for the knots of the form $P_{a}$ but all of them vanished. In the sequel we shall not use any of the following facts but we have decided to include them for completeness. In the remaining sections we will show that each knot $P_{a}$ has vanishing signature, its determinant is a square and Donaldson's theorem does not obstruct sliceness. Moreover, we have checked that the only $d$-invariant of their double branched covers, which are  homology spheres, vanishes. The hat version of the knot Floer homology of pretzel knots is known. The family $P_{a}$ lies within the hypothesis of \cite[Theorem 2]{b:Ef} which combined with the Alexander polynomials computed below  suffices to determine that the Ozsv\'ath-Szab\'o $\tau$ invariant is zero for all the knots in this family. Moreover, the Rasmussen $s$-invariant of the knot $P(3,-5,-4)=12_{n_475}$ is known to be zero and a crossing change argument implies that the first knot in our family, namely the knot $P(3,-5,-8)$, also has vanishing Rasmussen invariant. 

Given all this vanishing of obstructions one might be tempted to think that the knots in the family $P_{a}$ are actually slice. However, we conjecture that this is not the case for any parameter $a$ and we will show that indeed for $a\not\equiv 1,11,37,47,49,59$ (mod 60) the knot $P_{a}$ does not bound a disk embedded in the $4$-ball. The invariant capable of detecting the non-sliceness of these knots is the Alexander polynomial, in its most classical version. Perhaps it is surprising that one of the first invariants used in the slice problem is still capable of distinguishing subtleties that more modern invariants fail to see. 

We have postponed the tedious computation of the Alexander polynomials $\Delta_{a}(t)$ of the knots $P_{a}$ to Appendix A. In it we show that it holds
\begin{align*}
\Delta_{a}(t)&\doteq \frac{t^{a+2}+1}{t+1}\frac{t^a+1}{t+1}-\frac{(a+1)^{2}}{4}t^{a-1}(t-1)^{2}\\
&=\prod_{
\substack{d|a+2\\ d\neq 1}
}\Phi_{d}(-t)
\prod_{
\substack{\delta|a\\\delta\neq 1}
}\Phi_{\delta}(-t)-\frac{(a+1)^{2}}{4}t^{a-1}(t-1)^{2},
\end{align*}
where $\Phi_{n}$ stands for the $n$-th cyclotomic polynomial.

If the knots in family $P_{a}$ were slice then it would follow, by Fox and Milnor's theorem \cite{b:FM}, that $\Delta_{a}(t)\doteq f(t)f(t^{-1})$ for some polynomial $f(t)$. Our goal is to show that this is not the case and to this end several strategies are possible. If we showed that the polynomials $\Delta_{a}(t)$ are irreducible in $\Z[t]$, we would be done. Stepan Orevkov has kindly taken a look at this problem and informed us that he checked with the computer and up to $a=1597$ the polynomials $\Delta_{a}(t)$ are in fact irreducible. However we have not found a proof to show the irreducibility of every polynomial in our family. Another possible approach is to look at these polynomials modulo $p$ for some prime and study their irreducibility in $\F_{p}[t]$. However, by \cite[Theorem 12]{b:AV} the number of irreducible factors of $\Delta_{a}(t)$ (mod $p$) is even for every odd $p$, which implies that many (perhaps all) the polynomials $\Delta_{a}(t)$ are irreducible over the integers but reducible over $\F_{p}$ for every $p$. 

We are not able to show that none of the polynomials $\Delta_{a}(t)$ satisfies the Fox-Milnor factorisation but there is a lot of evidence pointing that this might be the case. Taking advantage of the rich literature on the reducibility of cyclotomic polynomials, we will prove the following statement.

\begin{thm}\label{t:Alex} 
For $a\not\equiv 1,11,37,47,49,59$ (mod 60) the polynomials $\Delta_{a}(t)$ do not have a Fox-Milnor factorisation. 
\end{thm}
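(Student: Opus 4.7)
The plan is to obtain the Fox--Milnor obstruction by evaluating $\Delta_a(t)$ at small roots of unity $\zeta$ and translating the resulting identity into a norm condition in the ring $\mathbb{Z}[\zeta]$. Recall that Fox--Milnor requires $\Delta_a(t)\doteq f(t)\,f(t^{-1})$ for some $f\in\mathbb{Z}[t,t^{-1}]$, so at every root of unity $\zeta$ we must have
$$\Delta_a(\zeta)\;=\;\pm\zeta^{k}\cdot N_{\mathbb{Q}(\zeta)/\mathbb{Q}(\zeta+\zeta^{-1})}\!\bigl(f(\zeta)\bigr),$$
where the right-hand factor is a \emph{totally positive} element of $\mathbb{Z}[\zeta+\zeta^{-1}]$ lying in the image of the field norm. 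My approach concentrates on the cases $\zeta=\zeta_3$ and $\zeta=\zeta_5$ (and, for a parity argument, $\zeta=i$), where the norm conditions are classical.

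First, I would exploit the telescoping identity
$$\prod_{\substack{d\mid m\\ d>1}}\Phi_d(-\zeta)\;=\;\frac{\zeta^m+1}{\zeta+1}\qquad(m\text{ odd})$$
applied with $m=a$ and $m=a+2$ to put the ``cyclotomic part'' $N_a(\zeta)$ in closed form that depends only on the residues of $a,a+2$ modulo the order of $\zeta$. Adding the explicit correction $-\tfrac{(a+1)^2}{4}\zeta^{a-1}(\zeta-1)^2$ gives a manageable expression for $\Delta_a(\zeta)$ in $\mathbb{Z}[\zeta]$. For instance at $\zeta=\zeta_3=:\omega$ one finds, according to $a\bmod 3$, that $\Delta_a(\omega)$ is $\omega^\varepsilon$ times one of the rationals $\tfrac{3(a+1)^2-8}{4}$ or $\tfrac{3(a+1)^2+4}{4}$; at $\zeta=i$ one obtains $\Delta_a(i)=\pm i\cdot\tfrac{(a+1)^2\mp 2}{2}$; and at $\zeta=\zeta_5$ a similar (but more delicate) computation places $\Delta_a(\zeta_5)$ in $\mathbb{Z}[\sqrt 5]$ after multiplication by an appropriate power of $\zeta_5$.

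The Fox--Milnor obstruction then becomes a collection of classical representability conditions: for $\zeta_3$, the integer must be of the form $x^2-xy+y^2$ (equivalently, every prime $p\equiv 2\pmod 3$ must appear to an even power); for $i$, it must be a sum of two integer squares (every prime $p\equiv 3\pmod 4$ must appear to an even power); and for $\zeta_5$, it must be a totally positive norm from $\mathbb{Z}[\zeta_5]$ to $\mathbb{Z}[\sqrt 5]$, controlled by the splitting behaviour of rational primes modulo~$5$. The cases in which one of these conditions fails correspond precisely to the arithmetic conditions $3\mid a$, $5\mid a$, or $5\mid(a+2)$: exactly when one of the small cyclotomic primes $\Phi_3(-t),\Phi_5(-t)$ enters the factorisation of the nice part $N_a(t)$ and ``contaminates'' $\Delta_a(\zeta)$ with a prime of the wrong congruence class. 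Enumerating residues mod~$60$, the six residues $\{1,11,37,47,49,59\}$ are precisely those satisfying $3\nmid a$, $5\nmid a$, and $5\nmid(a+2)$, so the method does not produce an obstruction there.

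The hard part is proving that the obstruction really is effective for \emph{every} $a$ in the complementary residue classes: one must show that the specific integers $\tfrac{3(a+1)^2\pm 8}{4}$ and $\tfrac{(a+1)^2\mp 2}{2}$ always contain some prime in the forbidden congruence class to an odd exponent. This is the place where the ``rich literature on the reducibility of cyclotomic polynomials'' (alluded to in the introduction) is used: one combines the elementary divisibility consequences of $3\mid a$, $5\mid a$, or $5\mid(a+2)$ with standard results on how rational primes split in $\mathbb{Q}(\zeta_p)$ to extract an obstruction that is uniform across each non-excluded residue class. A secondary but non-trivial step is to handle the totally positive norm condition in $\mathbb{Z}[\sqrt 5]$ rigorously, where class-number-one for $\mathbb{Z}[\zeta_5]$ together with Hasse's norm theorem lets one reduce to local checks.
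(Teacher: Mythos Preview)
Your approach---evaluating $\Delta_a$ at small roots of unity and extracting a norm condition---is genuinely different from the paper's, which instead reduces $\Delta_a$ modulo a prime $p$ dividing $\tfrac{(a+1)^2}{4}$ (so that the correction term disappears) and then counts the number $N_d^p=\varphi(d)/\mathrm{ord}_d(p)$ of irreducible factors of each cyclotomic piece $\overline{\Phi}_d^p(-t)$.  The paper's advantage is that the prime $p$ may be \emph{chosen} among the divisors of $\tfrac{a+1}{2}$, and the residue of $\tfrac{a+1}{2}$ modulo $3$ or $5$ gives enough control to guarantee such a $p$ with $N_3^p=1$ or $N_5^p=1$ in each of the non-excluded residue classes mod~$60$; the case $a\equiv 5\pmod{12}$ is handled by a separate direct argument showing $t^2+t+1$ divides $\overline{\Delta}^2_a$ with multiplicity exactly one.

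Your proposal, however, has a genuine gap.  First, the bookkeeping is off: the odd residues mod~$60$ with $3\nmid a$, $5\nmid a$ and $5\nmid(a+2)$ number twelve, not six (for instance $a\equiv 7,17,19,29,31,41$ satisfy all three conditions but are not on the excluded list).  More seriously, the step you flag as ``the hard part''---that $\tfrac{3(a+1)^2-8}{4}$ always contains a prime $\equiv 2\pmod 3$ to an odd power whenever $3\mid a$---is not merely unproven but \emph{false}.  Take $a=9$: your own formula gives $\Delta_9(\omega)=\tfrac{3\cdot 100-8}{4}=73$, a prime $\equiv 1\pmod 3$ and hence a norm from $\mathbb{Z}[\omega]$ (indeed $73=8^2+8\cdot 1+1^2$), so there is no obstruction at $\zeta_3$.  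At $\zeta=i$ one finds $\Delta_9(i)=49i$ and $49=7^2$ is a sum of two squares; and since $5\nmid 9$ and $5\nmid 11$ there is no $\zeta_5$ obstruction either.  The same phenomenon recurs at $a=21$, where $\Delta_{21}(\omega)=361=19^2$ is again a norm.  Both $a=9$ and $a=21$ are covered by the theorem, so your method as stated cannot establish it.

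The underlying reason is that the value $\Delta_a(\zeta)$ mixes a term depending only on $a$ mod (order of $\zeta$) with a term proportional to $(a+1)^2$; whether the sum is a norm is not governed by any fixed residue class of $a$, so a uniform obstruction across arithmetic progressions is not available this way.  The paper sidesteps this by working modulo primes \emph{dividing} $(a+1)^2/4$, which kills the problematic term entirely.
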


\begin{cor}
For $a\not\equiv 1,11,37,47,49,59$ (mod 60) the pretzel knots $P_{a}$ are not slice.
\end{cor}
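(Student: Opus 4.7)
Assume towards a contradiction that $P_a$ is slice. By the Fox-Milnor theorem \cite{b:FM},
\[
\Delta_a(t)\doteq f(t)\,f(t^{-1})\qquad\text{for some }f\in\mathbb{Z}[t].
\]
This factorisation persists under reduction modulo every prime $p$, and inspection of the resulting identity $\Delta_a(t)\equiv f(t)f(t^{-1})\pmod{p}$ in $\mathbb{F}_p[t,t^{-1}]$ shows that every self-reciprocal irreducible factor of $\Delta_a\bmod p$ must appear with even multiplicity, while non-self-reciprocal irreducibles come in reciprocal pairs. A self-reciprocal irreducible factor of odd multiplicity therefore obstructs Fox-Milnor.

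The starting point is the expression $\Delta_a(t)\doteq g(t)-h(t)^{2}$ from Appendix A, where
\[
g(t)=\prod_{\substack{d\mid a+2\\d>1}}\Phi_{d}(-t)\cdot\prod_{\substack{d\mid a\\d>1}}\Phi_{d}(-t),\qquad h(t)=\tfrac{a+1}{2}\,t^{(a-1)/2}(t-1).
\]
Using the identity $\Phi_{d}(-t)=\Phi_{2d}(t)$ valid for odd $d>1$, the main term $g$ is a product of cyclotomic polynomials whose reduction modulo $p$ is completely described by classical theory: each $\Phi_{n}$ splits in $\mathbb{F}_p[t]$ (for $p\nmid n$) into $\phi(n)/f$ irreducibles of degree $f=\mathrm{ord}_{n}(p)$, and these factors are self-reciprocal precisely when $-1\in\langle p\rangle\subset(\mathbb{Z}/n)^{\ast}$. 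This is the body of classical results that the introduction refers to as the ``rich literature on the reducibility of cyclotomic polynomials''.

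The proof would then proceed by a case analysis on $a\bmod 60=\mathrm{lcm}(3,4,5)$. For each residue outside the exceptional set $\{1,11,37,47,49,59\}$ I would exhibit a prime $p\in\{2,3,5\}$ and a self-reciprocal irreducible factor of $\Delta_a\bmod p$ of odd multiplicity. The residue $a\bmod 4$ controls whether $(a+1)/2$ is even (equivalently, whether $h(t)^{2}$ vanishes modulo $2$, so that $\Delta_a\equiv g\pmod{2}$ reduces to a product of distinct cyclotomic irreducibles and the argument is cleanest); the residues $a\bmod 3$ and $a\bmod 5$ control the presence of $\Phi_{6}$ and $\Phi_{10}$ factors in $g(t)$, which interact with $h(t)^{2}$ modulo $3$ and $5$ respectively. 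The six exceptional residues arise as precisely those for which, for every $p\in\{2,3,5\}$, the perturbation $h^{2}\bmod p$ pairs up the self-reciprocal irreducible factors of $g\bmod p$ into Fox-Milnor-compatible pieces.

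The main obstacle is tracking, in each case, how subtracting $h(t)^{2}\bmod p$ from $g(t)\bmod p$ affects the self-reciprocal factors: cancellations between the two terms can in principle eliminate an odd-multiplicity factor one wishes to produce, and distinguishing the residues requires a delicate interplay between the cyclotomic-integer arithmetic of $\mathbb{Z}[\zeta_{2d}]$ and the explicit shape of $h(t)^{2}$. Once this bookkeeping is carried out, every $a\not\equiv 1,11,37,47,49,59\pmod{60}$ is shown to fail Fox-Milnor, yielding the theorem.
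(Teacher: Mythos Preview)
The corollary itself is an immediate consequence of Theorem~\ref{t:Alex} together with Fox--Milnor, so what you are really sketching is a proof of that theorem. Your framework (reduce modulo a prime and look for a self-reciprocal irreducible factor of odd multiplicity) is exactly the paper's, but the decisive idea is absent from your plan: one does \emph{not} restrict to $p\in\{2,3,5\}$. Instead one chooses $p$ to be a prime dividing $\tfrac{(a+1)^2}{4}$. With that choice the term you call $h(t)^2$ vanishes identically modulo $p$, so $\Delta_a\equiv g\pmod p$ is a product of cyclotomic polynomials with pairwise distinct irreducible factors (since $\gcd(p,a(a+2))=1$), and the ``delicate interplay'' you flag as the main obstacle simply disappears. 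It then suffices to exhibit a single divisor $d$ of $a$ or $a+2$ with $N_d^p=\varphi(d)/\mathrm{ord}_d(p)$ odd, which the paper does by short congruence arguments. The prime $p$ so produced can be arbitrarily large: for $a\equiv 9\pmod{12}$ one takes any $p\mid(a+1)/2$ with $p\equiv 2\pmod 3$ (e.g.\ $p=11$ for $a=21$), certainly not confined to $\{2,3,5\}$.

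Your restriction to $p\in\{2,3,5\}$ is therefore not a stylistic variant but a genuine gap: it forces you into precisely the bookkeeping you identify as the obstacle, and you do not carry it out. It is not even clear it can be done uniformly. For instance when $a\equiv 21\pmod{60}$ none of $2,3,5$ divides $(a+1)/2$, so $h^2$ never vanishes; in addition $3\mid a$, so modulo $3$ the cyclotomic part $g$ is not squarefree and your parity count breaks down. The paper does contain one case ($a\equiv 5\pmod{12}$) handled by a direct induction showing that $t^2+t+1$ divides $\overline{\Delta}_a^{\,2}$ with multiplicity exactly one, but that is a single residue class treated by an explicit recursion, not a general mechanism of the kind you suggest. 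As written, your proposal stops at the point where the real work begins, and the route you outline is both harder than and of uncertain outcome compared with the paper's choice $p\mid\tfrac{(a+1)^2}{4}$.
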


\begin{proof}[Proof of Theorem~\ref{t:Alex}]
Since the Alexander polynomial $\Delta_{a}(t)$ is a \emph{self reciprocal} polynomial, i.e.\ $\Delta_{a}(t)=t^{\deg\Delta_{a}}\Delta_{a}(t^{-1})$, its irreducible factors are all self reciprocal or come in \emph{reciprocal pairs}, that is if $g|\Delta_{a}$ and $g$ is not self reciprocal then $g^{*}(t):=t^{\deg g}g(t^{-1})$ is also a factor of $\Delta_{a}$. Suppose that there exists a polynomial $f$ such that $\Delta_{a}(t)=t^{\deg\Delta_{a}}f(t)f(t^{-1})$ then we have that the irreducible self reciprocal factors of $\Delta_{a}$ all have even multiplicity. The idea of the proof is to show that the reduction mod $p$, for suitable primes $p$, of $\Delta_{a}$ has an odd number of self reciprocal irreducible factors. We start with a prime number $p$ dividing $\frac{(a+1)^{2}}{4}$ and consider the polynomial 
\begin{equation}\label{e:reduction}
\overline{\Delta}_{a}^{p}(t):=\Delta_{a}(t)\ (\mathrm{mod}\ p)=\prod_{
\substack{d|a+2\\ d\neq 1}
}\overline{\Phi}_{d}^{p}(-t)
\prod_{
\substack{\delta|a\\\delta\neq 1}
}\overline{\Phi}_{\delta}^{p}(-t).
\end{equation}
Notice that since gcd$(p,a)=$gcd$(p,a+2)=$gcd$(a,a+2)=1$ all the irreducible factors of the polynomials $\overline{\Phi}_{d}^{p}$ and $\overline{\Phi}_{\delta}^{p}$ in $\F_{p}[t]$ appearing in \eqref{e:reduction} have multiplicity one and are all distinct. Since cyclotomic polynomials are self reciprocal, if $\Delta_{a}$ satisfies the Fox-Milnor condition, it follows that every cyclotomic polynomial in \eqref{e:reduction} has an even number of irreducible factors. It is well known that the number of irreducible factors of $\overline{\Phi}_{d}^{p}$ equals the quotient between $\varphi(d)$ and the order of $p$ mod $d$, where $\varphi$ is Euler's totient function. Let us call this quotient $N_{d}^{p}$. It follows that if $\Delta_{a}$ has a Fox-Milnor factorisation then for every $p$ dividing $\frac{(a+1)^{2}}{4}$ and every $d|a+2$, $d\neq 1$, and every $\delta|a$, $\delta\neq 1$, the numbers  $N_{d}^{p}$ and $N_{\delta}^{p}$  are even. The rest of the proof will consist of suitable choices of $p$ and $a$ that force $N_{d}^{p}$ or $N_{\delta}^{p}$ to be odd.

Let us consider the odd parameter $a$ modulo 12. If $a\equiv 3,7$ (mod 12) then $3$ divides either $a$ or $a+2$ and $\frac{(a+1)^{2}}{4}$ is even. We obtain $N_{3}^{2}=1$. Moreover, if $a\equiv 9$ (mod 12) then $3$ divides $a$ and we choose $p$ dividing $\frac{(a+1)^{2}}{4}$ such that $p\equiv 2$ (mod 3). This choice is always possible since under these assumptions $\frac{(a+1)^{2}}{4}=(6n+5)^{2}$, $n\in\N$, and $6n+5\equiv 2$ (mod 3). Again in this case we obtain $N_{3}^{p}=1$. It follows that for $a\equiv 3,7,9$ (mod 12) the polynomial $\Delta_{a}$ does not have a Fox-Milnor factorisation.

We can push further this same argument to study the cases $a\equiv 1,11$ (mod 12) with less success. If $a=12n+11$ for some $n$  then $\frac{(a+1)^{2}}{4}$ is even and we choose $p=2$. The difficulty comes in the choice of $d$, since in this case there is not a common divisor for all $a$ or $a+2$. Adding the hypothesis $n\equiv 1,2$ (mod 5) we obtain that $d=5$ divides $a$ or $a+2$ and $N_{5}^{2}=1$. Similarly, if $a=12n+1$ and $n\equiv 1,2$ (mod 5) then again $d=5$ divides either $a$ or $a+2$. We choose as $p$ dividing $\frac{(a+1)^{2}}{4}=(6n+1)^{2}$ a prime such that $p\equiv 2,3$ (mod 5). This choice is always possible since under the current assumptions $6n+1\equiv 2,3$ (mod 5). In this case it follows that $N_{5}^{p}=1$. Summing up we have shown that for $a\equiv 13,23,25,35$ (mod 60) the polynomial $\Delta_{a}$ does not factor as $t^{\deg\Delta_{a}}f(t)f(t^{-1})$.

The last case we shall study is $a=12n+5$ with a different argument. We shall prove that in this case the polynomial $t^{2}+t+1$ is a reducible factor of the mod 2 reduction of $\Delta_{a}$ and it has multiplicity 1. Instead of working with the previous normalisation of the Alexander polynomial we shall use the Laurent polynomial $t^{-\frac{\deg\Delta_{a}}{2}}\Delta_{a}$ which we will keep calling $\Delta_{a}$ to ease the notation. The irreducible factor we shall be looking at is then $t+1+t^{-1}$. It is not difficult to check that we have the following identities
\begin{align*}
\overline{\Delta}^{2}_{5}&=t^{5}+t^{3}+1+t^{-3}+t^{-5},\\
\overline{\Delta}^{2}_{12n+5}(t)&=\overline{\Delta}^{2}_{12(n-1)+5}(t)+\sum_{i=1}^{6}(t^{12(n-1)+5+2i}+t^{-(12(n-1)+5+2i)}).
\end{align*}
Since these polynomials verify, in $\F_{2}$,
\begin{small}
\begin{align*}
&t^{5}+t^{3}+1+t^{-3}+t^{-5}\equiv (t+1+t^{-1})(t^{4}+t^{3}+t^{2}+1+t^{-2}+t^{-3}+t^{-4})& (\mathrm{mod}\ 2),\\
&\sum_{i=1}^{6}(t^{x+2i}+t^{-(x+2i)})\equiv(t+1+t^{-1})(\sum_{i=0}^{2}(t^{x+11-i}+t^{-(x+11-i)})+(\sum_{i=0}^{2}(t^{x+5-i}+t^{-(x+5-i)})& (\mathrm{mod}\ 2),
\end{align*}
\end{small}%
an easy induction argument shows that $t+1+t^{-1}$ is an irreducible factor of $\overline{\Delta}^{2}_{12n+5}$ for all $n\in\N$. However $(t+1+t^{-1})^{2}\equiv t^{2}+1+t^{-2}$ (mod 2) is never a factor of $\overline{\Delta}^{2}_{12n+5}$. Indeed, one can easily check that $t^{2}+1+t^{-2}$ does not divide $\overline{\Delta}^{2}_{5}$, but on the other hand we have
\begin{equation*}
\sum_{i=1}^{6}(t^{x+2i}+t^{-(x+2i)})=(t^{2}+1+t^{-2})(t^{x+10}+t^{x+4}+t^{-(x+4)}+t^{-(x+10)}).
\end{equation*}
Again and easy induction argument yields the claim and the theorem is proved.
\end{proof}

\begin{rem}
The first knot in the family $P_{a}$ that is not covered by Theorem~\ref{t:Alex} is $P_{11}=P(11,-13,-72)$. However it is immediate to check that in this case $N_{11}^{2}=1$ and therefore $P_{11}$ is not slice. It is frustrating to accept the fact that for every single knot we have checked we have found an argument to determine its non-sliceness but that all attempts to generalise the proof to the whole family have miserably failed.
\end{rem}

\section{The general case: first obstructions to sliceness}\label{s:1step}

We start now the study of slice pretzel knots with one even parameter in full generality. A necessary condition for a pretzel knot $K:=P(p_1,...,p_n)$ to be slice is that the intersection lattice associated to its canonical negative plumbing graph $(\Z^m,Q_\Psi)$  admits an embedding into the standard diagonal negative lattice of the same rank. This follows, as explained in the introduction, from the fact that if $K$ is slice, the associated Seifert space $Y$ smoothly bounds a rational homology ball. As remarked in Section~\ref{s:pre} up to considering mirror images we can suppose that $\sum_{i=1}^n\frac{1}{p_i}>0$ and thus the canonical negative plumbing graph exists. Moreover, $K$ being slice implies that the invariant $\sigma(K)$ vanishes. In this section, imposing these two conditions, the existence of the embedding and the vanishing of the knot signature, we get important information on $p_1,...,p_n$ and on the embedding $(\Z^m,Q_\Psi)\hookrightarrow(\Z^m,-\mathrm{Id})$. The conclusions differ depending on whether the parameters satisfy the conditions \textbf{(p1)}, \textbf{(p2)} or \textbf{(p3)} defined in Section~\ref{s:pre}. In fact, the existence of the embedding and the condition $\sigma(K)=0$ totally determine $Y$ for  the families \textbf{(p1)} and \textbf{(p2)} and they determine $Y$ up to one parameter for the family \textbf{(p3)}.

Given $\Psi$, we are interested in whether an embedding into the standard negative lattice exists, however the particular embedding is not relevant. With the notation established in Section~\ref{s:pre} in the following lemma we prove that, up to a change of basis, the images under the embedding of the $(-2)$--chains and of the central vertex are totally determined.

\begin{lem}\label{l:-2c}
Let $Y:=Y(a_1,...,a_s;c_1,...,c_t)$ be a Seifert manifold, boundary of a negative definite canonical plumbing $M_\Psi$, whose associated intersection lattice $(\Z^{m},Q_{\Psi})$ admits an embedding into $(\Z^{m},-\mathrm{Id})$. Moreover, suppose that the associated pretzel link is a knot. Then, up to a change of basis,
\begin{itemize}
\item[$(1)$] For every $k\in\{1,...,t\}$, the image of the vertices $v_{1,k},...,v_{c_k-1,k}$ of the $(-2)$--chain $C_k$ is $v_{i,k}=e_i^{k}-e_{i+1}^k$, for every $i\in\{1,...,c_k-1\}$. In particular, $U_{C_k}\cap U_{C_\ell}=\emptyset$ for every $k,\ell\in\{1,...,t\}$ with $k\neq\ell$.
\item[$(2)$] The central vertex satisfies $v_0=-e_1^{1}-e_1^{2}-...-e_1^t$, with $e_1^{k}$ defined in $(1)$.
\item[$(3)$]$s\geq t-1$.
\end{itemize}
\end{lem}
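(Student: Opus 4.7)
The proof has three parts; the bulk of the work is in part $(1)$.

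For part $(1)$, my plan is to induct on $j$ along each chain $C_k$, processing the chains one at a time. Every norm $-2$ element of $(\Z^m,-\mathrm{Id})$ has the form $\pm e_a\pm e_b$, so after a signed permutation of the basis we may assume $v_{1,k}=e_1^k-e_2^k$ with $e_1^k,e_2^k$ fresh. Inductively, if $v_{i,k}=e_i^k-e_{i+1}^k$ for $i<j$, write $v_{j,k}=\sum_{l=1}^{j}c_l e_l^k+Z$; the orthogonality $v_{j,k}\cdot v_{i,k}=0$ for $i<j-1$ forces $c_1=\cdots=c_{j-1}=:x$, the adjacency $v_{j,k}\cdot v_{j-1,k}=1$ gives $c_j=x+1$, and the norm condition yields
\[
(j-1)x^{2}+(x+1)^{2}+\|Z\|^{2}=2.
\]
Its only integer solutions are $x=0$ (producing the standard form after introducing a fresh $e_{j+1}^k$ and, if necessary, flipping its sign) and $x=-1$ with $j\leq 3$. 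For $j=2$ the nonstandard solution $v_{2,k}=-e_1^k+e_3^k$ is orthogonally equivalent to the standard form via the involution $e_1^k\leftrightarrow-e_2^k,\;e_3^k\mapsto-e_3^k$. For $j=3$ the nonstandard solution $v_{3,k}=-e_1^k-e_2^k$ would, combined with $v_0\cdot v_{1,k}=1$ and $v_0\cdot v_{3,k}=0$, force $v_0$ to have the noninteger coefficient $x_1^k=-\tfrac12$ on $e_1^k$; this contradicts integrality and rules it out.

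Disjointness $U_{C_k}\cap U_{C_\ell}=\emptyset$ for $k\neq\ell$ then follows from the standard form: for any vertex $v$ of a chain $C_\ell$ with $\ell\neq k$, the relations $v\cdot v_{i,k}=0$ equate the coefficients of $v$ on $e_1^k,\dots,e_{c_k}^k$; combined with $\|v\|^2=2$ and $c_k\geq 2$ this common value must vanish unless $c_k=2$ and $v=\pm(e_1^k+e_2^k)$. But $c_k=2$ requires the unique even parameter of the pretzel to be exactly $+2$, so every other $c_\ell$ is odd and $\geq 3$, giving $C_\ell$ at least two vertices. Then $v_{i,\ell}\cdot v_{i+1,\ell}=1$ with $v_{i,\ell}=\pm(e_1^k+e_2^k)$ would force $v_{i+1,\ell}$ to have a half-integer coefficient on $e_1^k$, again a contradiction. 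For part $(2)$, I would write $v_0=\sum_{k,j}x_j^k e_j^k+Z$ with $Z$ supported off $\bigcup_k U_{C_k}$. The conditions $v_0\cdot v_{1,k}=1$ and $v_0\cdot v_{i,k}=0$ for $i\geq 2$ give $x_2^k=\cdots=x_{c_k}^k=:y_k$ and $x_1^k=y_k-1$, so $v_0\cdot v_0=-t$ becomes
\[
\sum_{k=1}^{t}\bigl(c_k y_k^{2}-2y_k+1\bigr)+\|Z\|^{2}=t.
\]
Each summand is an integer $\geq 1$, with equality only at $y_k=0$, or also at $y_k=1$ when $c_k=2$. With $t$ summands, this forces $\|Z\|^2=0$ and every summand to equal $1$. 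The case $y_k=0$ gives $v_0|_{U_{C_k}}=-e_1^k$ directly; the exceptional case $y_k=1$, $c_k=2$ gives $v_0|_{U_{C_k}}=e_2^k$, converted to $-e_1^k$ by the involution $e_1^k\leftrightarrow-e_2^k$ (which fixes $v_{1,k}$).

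Part $(3)$ is then a short counting argument: by $(1)$ the $\sum_k c_k$ basis vectors $\{e_j^k\}$ are pairwise distinct elements of a basis of the rank-$m$ diagonal lattice, so $\sum_k c_k\leq m=s+1+\sum_k(c_k-1)$, which rearranges to $s\geq t-1$. The main obstacle in the plan is the inductive classification in part $(1)$, specifically ruling out the nonstandard solutions for short chains: this requires both an orthogonal change of basis (for $j=2$) and the integrality of $v_0$ combined with the pretzel-knot hypothesis of a single even parameter (for $j=3$ and for the cross-chain interactions when some $c_k=2$).
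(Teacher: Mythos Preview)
Your proof is correct and follows essentially the same approach as the paper's: the inductive classification of each $(-2)$--chain with the $j=3$ nonstandard case ruled out via the integrality of $v_0$ (this is exactly the paper's treatment of the $c_k=4$ exception), the disjointness of the $U_{C_k}$ via a neighbor--in--the--other--chain contradiction, the determination of $v_0$ from its norm, and the rank count for part~$(3)$ all mirror the paper's reasoning. Your exposition is a bit more self-contained (the paper cites \cite{b:GJ} for the chain classification when $c_k\geq 5$) and your part~$(2)$ is phrased as a direct norm computation rather than the paper's $|U_{v_0}\cap U_{C_k}|$ count, but the ideas coincide.
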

\begin{proof}
We start by proving $(1)$ using the same argument as in \cite{b:GJ}. The only linear combinations of the $\{e_i^k\}_{i,k}$ which lead vectors of square $-2$ are of the form $\pm e_i^k\pm e_j^\ell$. Since $v_{1,k}$ has square $-2$, up to reindexing the basis of $\Z^m$ and up to scaling by $-1$, we must have $v_{1,k}=e_1^k-e_2^k$. If $c_{k}>2$ then there exists $v_{2,k}$, which has also square $-2$, and since $v_{1,k}\cdot v_{2,k}=1$, one of the vectors appearing in $v_{2,k}$ must be either $-e_1^k$ or $e_2^k$. Thus, again up to reindexing and rescaling, we are forced to define $v_{2,k}=e_2^k-e_3^k$. So far we have proved $(1)$ for $c_k=2$ or $c_k=3$.
In the proof of \cite[Lemma~3.1]{b:GJ} it is explained how to proceed by induction to show that (up to a change of basis) we are forced to make the assignment $v_{i,k}=e_i^k-e_{i+1}^k$ for all $i\in\{1,...,c_k-1\}$, provided $c_k\geq 5$. Therefore, we are left with the case $c_k=4$. It is easy to check that this time there is a second possibility for the embedding of the $(-2)$--chain $C_k$, namely
\begin{align*}
v_{1,k}&=e_1^k-e_2^k,\\
v_{2,k}&=e_2^k-e_3^k,\\
v_{3,k}&=-e_1^k-e_2^k.
\end{align*}
However, since $v_{1,k}\cdot v_0=1$ we must have $\{e_1^k,e_2^k\}\cap U_{v_0}\neq\emptyset$, which is not compatible with $v_0\cdot v_{3,k}=0$.

In order to conclude with $(1)$ suppose by contradiction that for two different indices $k,\ell\in\{1,...,t\}$ there exist $i,j$ and $e_s^r$ such that $e_s^r\in U_{v_{i,k}}\cap U_{v_{j,\ell}}$. Since $v_{i,k}\cdot v_{j,\ell}=0$, there must exist some other basis vector, say $e_u^x$, in $U_{v_{i,k}}\cap U_{v_{j,\ell}}$ and since $v_{i,k}\cdot v_{i,k}=v_{j,\ell}\cdot v_{j,\ell}=-2$, we have $U_{v_{i,k}}=U_{v_{j,\ell}}=\{e_s^r,e_u^x\}$. By assumption $P(a_1,...,a_s,c_1,...,c_t)$ is a knot, which implies in particular that at most one parameter among $c_1,...,c_t$ is  equal to $2$ and therefore, at least one of the chains $C_k$ and $C_\ell$ has length strictly greater than one, say it is $C_k$. It follows from the above arguments that exactly one between $e_s^r$ and $e_u^x$ belongs to either $U_{v_{i-1,k}}$ or $U_{v_{i+1,k}}$. Let us fix, without loss of generality, that $e_s^r\in U_{v_{i+1,k}}$, then we have that $v_{i+1,k}\cdot v_{j,\ell}\neq 0$. This contradiction proves $(1)$.

Next, all the $(-2)$--chains $C_{1},...,C_t$ are connected to the central vertex and therefore $|U_{v_0}\cap U_{C_k}|\geq 1$ for every $k$. Since there are exactly $t$ $(-2)$--chains and the weight of the central vertex is $-t$, we have $|U_{v_0}\cap U_{C_k}|=1$ for every $k$. Hence, up to eventually reindexing and scaling by $-1$, we have that for every $k\in\{1,...,t\}$ the vector $-e_1^{k}$ is a summand in the expression of $v_0$. Thus, $(2)$ is proved.   

We are left with $(3)$, which follows from a straightforward computation. In fact, from $(1)$ we know that for each $k\in\{1,...,t\}$, the image of the chain $C_k$ is contained in the span of $c_k$ vectors of the basis $\{e_i^k\}_{i,k}$. Moreover, for every $k,\ell\in\{1,...,t\}$, $k\neq\ell$, the images of $C_k$ and $C_\ell$ are disjoint. Therefore,  the rank of the image of the embedding, which in the statement is called $m$, must be at least equal to $\sum_{k=1}^t c_k$. On the other hand, being $\Psi$ the canonical negative plumbing graph, we know that 
$$m=s+1+\sum_{k=1}^t (c_k-1).$$
Hence, $s\geq t-1$ and $(3)$ holds.
\end{proof}

Once established in Lemma~\ref{l:-2c} the convention on the embedding of the $(-2)$--chains and the central vertex, we study in Lemmas~\ref{l:p1}, \ref{l:p2} and \ref{l:p3} the embedding of the whole lattice associated to $\Psi$. In the three lemmas we will assume the following slice conditions:
\begin{itemize}
\item[(SC)]$n=s+t\geq 3$, $Y:=Y(a_1,...,a_s;c_1,...,c_t)$ is a Seifert manifold, boundary of a negative definite canonical plumbing $M_\Psi$, whose associated intersection lattice $(\Z^{m},Q_{\Psi})$ admits an embedding into $(\Z^{m},-\mathrm{Id})$ and $\sigma(P_\Psi)=0$, where $P_\Psi$ is the associated pretzel knot. 
\end{itemize}

\begin{rem}
Notice that, with the notation that we have fixed for Seifert spaces, the condition \textbf{(p1)} reads $s+t$ even and among $a_1,...,a_s,c_1,...,c_t$ there is exactly one even integer; the condition \textbf{(p2)} asks $s+t$ odd and all $a_1,...,a_s,c_1,...,c_t$ odd except for one $c_i$; finally, \textbf{(p3)} forces $s+t$ odd and all $a_1,...,a_s,c_1,...,c_t$ odd except for one $a_i$.
\end{rem}

\begin{lem}\label{l:p1}
Assume {\normalfont(SC)} and that $a_1,...,a_s,c_1,...,c_t$ satisfy {\normalfont\textbf{(p1)}}. Then $s=t$,
$$Y=Y(-c_{1},-c_{2},...,-c_{t}-1;c_{1},...,c_{t}).$$
and the embedding into the diagonal standard negative lattice is, up to a change of basis, the one in Figure~\ref{f:p2}.
\end{lem}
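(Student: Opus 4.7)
The plan is to combine the normalization of the embedding from Lemma~\ref{l:-2c} with the signature-zero hypothesis, via a Wu-class computation, and then to extract the shape of the parameters by a short integrality argument.

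After applying Lemma~\ref{l:-2c}, I set $E_k:=e_1^k+\dots+e_{c_k}^k$ and denote by $f_1,\dots,f_r$ the $r:=m-\sum_k c_k=s+1-t\geq 0$ basis vectors of $\Z^m$ unused by $v_0$ and by the chains. The condition $v_i\cdot v_{j,k}=0$ forces the coordinates of $v_i$ along each chain to coincide, so $v_i=\sum_k y_i^k E_k+\sum_\alpha z_{i,\alpha}f_\alpha$ with $y_i^k,\,z_{i,\alpha}\in\Z$, subject to
\[
\sum_k y_i^k=1,\qquad |a_i|=\sum_k c_k(y_i^k)^2+\sum_\alpha z_{i,\alpha}^2,\qquad \sum_k c_k y_i^k y_j^k+\sum_\alpha z_{i,\alpha}z_{j,\alpha}=0\ (i\neq j).
\]

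Next I would run the Neumann--Raymond algorithm of Section~\ref{s:muu} on $\Psi$. A direct parity analysis, carried out separately for the sub-cases in which the unique even parameter is one of the $a_i$'s or one of the $c_k$'s, shows that the Wu set for family \textbf{(p1)} is exactly $\{v_1,\dots,v_s\}$; the hypothesis $s+t$ even is precisely what forces the chain vertices and $v_0$ to drop out. Hence $w=\sum_{i=1}^s v_i$. Since $M_\Psi$ is negative definite, $\mathrm{sign}(Q_\Psi)=-m$, and Saveliev's formula \eqref{e:Sa} together with $\sigma(P_\Psi)=0$ gives $w\cdot w=-m$. Pairwise orthogonality of the $v_i$'s produces $w\cdot w=\sum_i a_i$, so
\[
\sum_{i=1}^s|a_i|=m=s+1-t+\sum_{k=1}^t c_k.
\]

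Setting $Y^k:=\sum_i y_i^k$ and $Z_\alpha:=\sum_i z_{i,\alpha}$, the vector $w$ has coordinates $Y^k$ repeated $c_k$ times along chain $k$ and $Z_\alpha$ on $f_\alpha$; rewriting $w\cdot w=-m$ in these coordinates yields
\[
\sum_{k=1}^t c_k\bigl((Y^k)^2-1\bigr)+\sum_{\alpha=1}^r Z_\alpha^2=s+1-t,
\]
while summing $\sum_k y_i^k=1$ over $i$ gives $\sum_k Y^k=s$. The main obstacle of the proof is to conclude from this pair of identities, together with $s\geq t-1$ from Lemma~\ref{l:-2c}(3) and $s+t\equiv 0\pmod 2$, that $Y^k=1$ for every $k$. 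The point is that $(Y^k)^2-1$ is either $-1$ (when $Y^k=0$) or at least $3$ (when $|Y^k|\geq 2$), while $c_k\geq 2$, so the tiny right-hand side severely restricts the possibilities: a parity argument rules out any $Y^k=0$ (the number of zeros is even and at most $t-s\leq 1$), and a sharper tracking of $\sum_k Y^k=s$ against the positive contribution $\geq 3c_{k_1}$ of any $|Y^{k_1}|\geq 2$ rules out the remaining configurations, simultaneously forcing $r=1$ and $s=t$.

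Once $Y^k=1$ for every $k$ and $s=t$, the identity $\sum_i|a_i|=m=\sum c_k+1$ combined with the explicit formula for $|a_i|$ rewrites as
\[
\sum_k c_k\Bigl(\sum_i(y_i^k)^2-1\Bigr)+\sum_i z_{i,1}^2=1.
\]
Each summand on the left is non-negative, $\sum_i(y_i^k)^2\geq 1$ is forced by $\sum_i y_i^k=1$ with integer entries, and $\sum_i z_{i,1}^2\geq 1$ since $Z_1=\pm 1$. Equality in both bounds forces each column of $(y_i^k)$ to have a single $1$ entry with zeros elsewhere, and exactly one $z_{i,1}$ to be $\pm 1$ while the others vanish. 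Together with the row-sum condition $\sum_k y_i^k=1$, the matrix $(y_i^k)$ is a permutation matrix, which I may take to be the identity after relabeling the chains; relabeling so that the nonzero coordinate on $f_1$ occurs at $i=t$, I obtain $v_i=E_i$ for $i<t$ and $v_t=E_t\pm f_1$, which yields $|a_i|=c_i$ for $i<t$ and $|a_t|=c_t+1$. This is precisely $Y=Y(-c_1,\dots,-c_{t-1},-c_t-1;c_1,\dots,c_t)$ together with the embedding of Figure~\ref{f:p2}.
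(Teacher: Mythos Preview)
Your overall strategy is sound, and the second half of the argument (once $Y^k=1$ for all $k$ is established) is clean and correct. The gap is precisely at the step you flag as the ``main obstacle'': the two equations you derive for $(Y^k,Z_\alpha)$, namely $\sum_k c_k\bigl((Y^k)^2-1\bigr)+\sum_\alpha Z_\alpha^2=r$ and $\sum_k Y^k=s$, do \emph{not} force $Y^k=1$. A concrete counterexample with $s=t=5$, $r=1$: take $c_1=3$, $c_4=11$, $c_5=13$ (and $c_2,c_3$ any odd integers $\ge 3$), $Y=(3,1,1,0,0)$, $Z_1=1$. Then $\sum_k Y^k=5=s$ and $\sum_k c_k((Y^k)^2-1)+Z_1^2=3\cdot 8-11-13+1=1=r$. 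Your parity claim (``the number of zeros is even'') is valid only when all $|Y^k|\le 1$, because it uses $\sum_k Y^k\equiv\#\{k:Y^k\neq 0\}\pmod 2$; once some $|Y^k|\ge 2$ this fails, and the ``sharper tracking'' you allude to cannot rule it out either, since large positive contributions can be cancelled by several zero terms with large $c_k$.

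The missing ingredient, which is exactly what the paper's argument exploits, is that every coordinate of $\iota(w)$ is \emph{odd}. Indeed, $\iota(\Z^m)$ sits in $(\Z^m,-\mathrm{Id})$ with odd index (its square is $|\det Q_\Psi|$, the knot determinant), and the Wu class $w$ is characteristic for $Q_\Psi$; an easy argument shows it is then characteristic for $-\mathrm{Id}$ as well. Hence all $Y^k,Z_\alpha$ are odd, so $(Y^k)^2-1\ge 0$ and $Z_\alpha^2\ge 1$, and your equation forces $|Y^k|=|Z_\alpha|=1$. Then $\sum_k Y^k=s$ with $Y^k\in\{\pm 1\}$ gives $(t-s)/2$ entries equal to $-1$; since $s\ge t-1$ and $s+t$ is even, $s=t$ and all $Y^k=1$. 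With this single observation inserted, your coordinate approach goes through and is essentially equivalent to the paper's proof, which uses the same fact in the form $w\cdot v_i=v_i\cdot v_i$ together with $\beta_j^k\in\{\pm 1\}$ to conclude that each coordinate of $v_i$ lies in $\{0,\pm 1\}$ and that the supports $U_{v_i}$ are pairwise disjoint.
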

\begin{figure}[h]
\begin{center}
\frag[ss]{-e1tn-e1tn-e1n}{$-\!e_{1}^{1}\!-\!e_{1}^{2}\!-\!...\!-\!e_{1}^{t}$}
\frag[ss]{e1t1}{$e_{1}^{1}\!-\!e_{2}^{1}$}
\frag[ss]{e2}{$e_{2}^{1}\!-\!e_{3}^{1}$}
\frag[ss]{e1a}{$e_{c_{1}\!-\!1}^{1}\!-e_{c_{1}}^{1}$}
\frag[ss]{e1}{$e_{1}^{2}\!-\!e_{2}^{2}$}
\frag[ss]{e3}{$e_{2}^{2}\!-\!e_{3}^{2}$}
\frag[ss]{e4}{$e_{c_{2}-1}^{2}\!-\!e_{c_{2}}^{2}$}
\frag[ss]{e5}{$e_{1}^{t}\!-\!e_{2}^t$}
\frag[ss]{e6}{$e_{2}^{t}\!-\!e_{3}^{t}$}
\frag[ss]{h}{$ $}
\frag[ss]{f}{${\color{grigio}e_1^0+e_2^0}$}
\frag[ss]{e7}{$e_{c_{t}-1}^{t}\!-\!e_{c_{t}}^{t}$}
\frag[ss]{a1}{$e_{1}^{1}+e_{2}^{1}+...+e_{c_{1}}^{1}$}
\frag[ss]{a2}{$e_{1}^{2}+e_{2}^{2}+...+e_{c_{2}}^{2}$}
\frag[ss]{a3}{$e_{1}^{t-1}+e_{2}^{t-1}+...+e_{c_{t-1}}^{t-1}$}
\frag[ss]{a4}{$e_{1}^{t}+e_{2}^{t}+...+e_{c_{t}}^{t}+e_1^0$}
\includegraphics[scale=0.55]{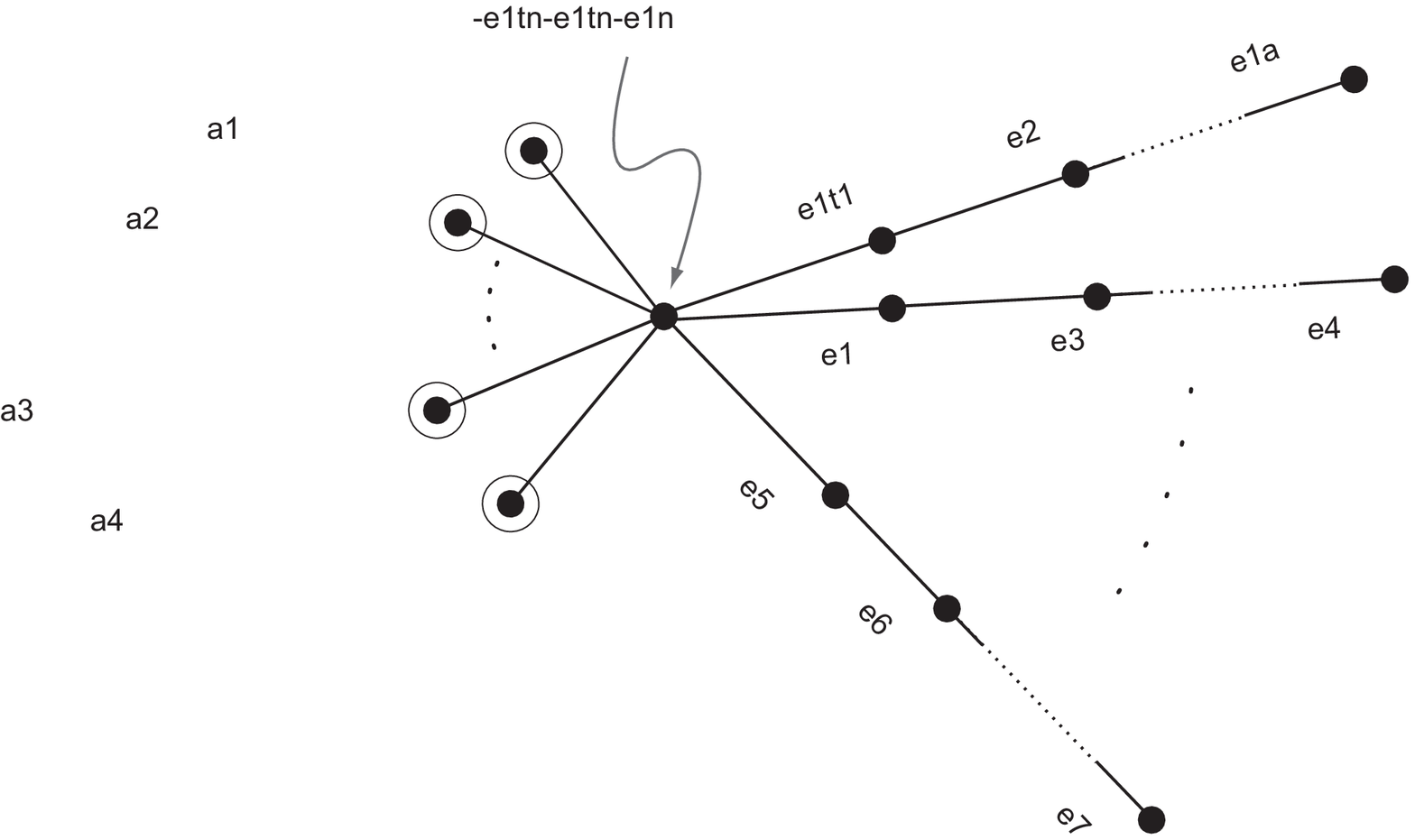}
\hcaption{Up to a change of basis the embedding of the canonical negative plumbing graph for family \textbf{(p1)}. The basis of $\Z^{m}$ is $\{e^0_1,e_{j}^{k}\}$, where $k\in\{1,...,t\}$ and $j\in\{1,...,c_{k}\}$.}
\label{f:p2}
\end{center}
\end{figure}

\begin{proof}
The homology Wu class $w$, depicted in Figure~\ref{f:Wuset}, is $w=\sum_{i=1}^{s}v_{i}$ and hence $w\cdot v_{i}=a_{i}$ for all $i\in\{1,...,s\}$. The assumption $\sigma(P_\Psi)=0$ together with the fact that $M_\Psi$ is negative definite imply, by \eqref{e:Sa}, that $w\cdot w=-m$. By definition of $w$, its embedding into $(\Z^m,-\mathrm{Id})$ must be of the form $w=\sum_{j,k}\beta_{j}^k e_{j}^k$ with $\beta_{j}^k\in\{\pm 1\}$. Let us write the embedding of the $v_i$ as $v_i=\sum_{j,k}x(i)_j^ke_j^k$, where $x(i)_j^k\in\Z$. The equation $w\cdot v_{i}=v_{i}\cdot v_{i}$ implies 
$$-\sum_{j,k}\beta_j^kx(i)_j^k=-\sum_{j,k}(x(i)_j^k)^2.$$
Thus, $\beta_j^kx(i)_j^k\in\{0,1\}$ and for every $i$, the vector $v_{i}$ is a linear combination of exactly $|a_{i}|$ vectors of the basis $\{e_j^k\}_{j,k}$ with coefficients $\pm 1$. Moreover, since for every $i,j\in\{1,...,s\}$ with $i\neq j$, we have $v_{i}\cdot v_{j}=0$ from the equality $w=\sum_{i=1}^{s}v_{i}=\sum_{j,k}\beta_{j}^k e_{j}^k$ we deduce easily that $U_{v_{i}}\cap U_{v_{j}}=\emptyset$. Furthermore, it follows that for each $(-2)$--chain $C_{k}$, $k\in\{1,...,t\}$, there must exist one and only one $i(k)\in\{1,...,s\}$ such that $U_{v_{i(k)}}\cap U_{C_{k}}\neq\emptyset$. Notice that we do not yet exclude the possibility $i(k)=i(k')$ for $k\neq k'$. Since $C_{k}$ is orthogonal to $v_{i(k)}$, it follows  that $U_{C_{k}}\subseteq U_{v_i(k)}$, which implies in particular that $c_{k}\leq |a_{i(k)}|$ for every $k$. 

Consider the set $\Delta:=\{v_i\subset w|\,U_{v_i}\cap U_{C_k}=\emptyset\ \forall k\}$. Observe that $s\leq |\Delta| +t$. By Lemma~\ref{l:-2c}\,$(1)$, it holds that 
$$|U_{w\setminus\Delta}|\geq\sum_{k=1}^t c_k,$$
and therefore, since  $U_{v_i}\cap U_{v_j}=\emptyset$ for every $v_i,v_j\subset w$, $i\neq j$, we have  
$$|U_\Delta|\leq m-\sum_{k=1}^t c_k=s+1+\sum_{k=1}^{t}(c_{k}-1)-\sum_{k=1}^t c_k=s+1-t.$$
Since, for every $i,\ell\in\{1,...,s\}$ with $i\neq\ell$ it holds $a_i\leq -2$, $U_{v_{i}}\cap U_{v_{\ell}}=\emptyset$ and $|x(i)_j^k|\leq 1$ for every $j,k$, it follows that the embedding in the statement is possible only if $2|\Delta|\leq |U_\Delta|$. Hence, only if $s\leq t+1$. On the other hand, the embedding of the $(-2)$--chains requires, by Lemma~\ref{l:-2c}$(3)$, that $s\geq t-1$. Since, by assumption \textbf{(p1)}, we have $n=s+t\equiv 0\,$(mod 2) it follows that $s=t$. Furthermore, this last equality implies that $\Delta=\emptyset$ and that for each $i\in\{1,...,s\}$, there exists at most one $k\in\{1,...,t\}$ such that $U_{v_i}\cap U_{C_k}\neq\emptyset$. In fact, assume by contradiction that there exist $i,k,k'$, with $k\neq k'$, such that $U_{C_k\cup C_{k'}}\subseteq U_{v_i}$, then $|\Delta|\geq s-t+1$ and the inequality $2|\Delta|\leq |U_\Delta|$ gives $s\leq t-1$. This contradiction shows that if $k\neq k'$, then $i(k)\neq i(k')$ and also that $s\in\{t,t+1\}$. From now on, we assume without loss of generality that $i(k)=k$ for every $k$. 

Equality $s=t$ forces

$$1+\sum_{k=1}^{t}c_{k}=m=-\mathrm{sign}(Q_{\Psi})=\underbrace{-\sigma(P_\Psi)}_{=0}-w\cdot w=\sum_{k=1}^{t}|a_k|.$$

Since $c_{k}\leq |a_{k}|$, we must have $c_{k}=|a_k|$ for all parameters except one. Without loss of generality we fix that $a_k=-c_k$ for $k\in\{1,...,t-1\}$ and $a_t=-c_t-1$. Notice that, by assumption \textbf{(p1)}, either $a_t$ or $c_t$ is the only even parameter among $a_1,...,a_t,c_1,...,c_t$. At this point, after fixing the embedding of the $(-2)$--chains and the central vertex applying Lemma~\ref{l:-2c}, it is straightforward to check that the embedding is as claimed.
\end{proof}

\begin{lem}\label{l:p2}
Assume {\normalfont(SC)} and that $a_1,...,a_s,c_1,...,c_t$ satisfy {\normalfont\textbf{(p2)}}. Then $s=t-1$, 
$$Y=Y(-c_{1},...,-c_{t-1};c_{1},...,c_{t})$$
and the embedding into the diagonal negative lattice is, up to a change of basis, the one in Figure~\ref{f:p3a}.
\end{lem}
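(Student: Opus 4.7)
The proof follows the blueprint of Lemma~\ref{l:p1}, with the novelty that in family \textbf{(p2)} the Wu class $w$ also receives contributions from the even chain $C_t$. Applying the algorithm of Section~\ref{s:muu} (or solving the congruences $Q_\Psi(w,x)\equiv Q_\Psi(x,x)\pmod 2$ vertex by vertex) one checks that $v_0$ is not in the Wu set, the odd chains $C_1,\dots,C_{t-1}$ contribute nothing, and $v_{j,t}$ belongs to the Wu set exactly for $j$ odd. Hence
$$w=\sum_{i=1}^{s}v_i+\sum_{\substack{j\text{ odd}\\ 1\le j\le c_t-1}}v_{j,t}.$$
These Wu-set vertices are pairwise $Q_\Psi$-orthogonal (two distinct odd indices in $C_t$ differ by at least $2$, so the corresponding chain vertices are non-adjacent in $\Psi$), yielding $w\cdot w=\sum_i a_i-c_t$. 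Combined with $\sigma(P_\Psi)=\mathrm{sign}(Q_\Psi)-w\cdot w=-m-w\cdot w=0$, this gives the key identity
\begin{equation}\label{e:p2id}
\sum_{i=1}^{s}|a_i|=s+1-t+\sum_{k=1}^{t-1}c_k.
\end{equation}

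After embedding $v_0$ and the chains as prescribed by Lemma~\ref{l:-2c}, orthogonality $v_i\cdot v_{j,k}=0$ forces the coefficient of $v_i$ on the block $U_{C_k}=\{e_1^k,\dots,e_{c_k}^k\}$ to be a single value $\gamma_{i,k}$, and $v_i\cdot v_0=1$ reads $\sum_k\gamma_{i,k}=1$. Exactly as in Lemma~\ref{l:p1}, the relation $v_i\cdot w=v_i\cdot v_i$ --- which still holds because the odd-indexed $v_{j,t}$'s are all orthogonal to every $v_i$ --- combined with $w\cdot w=-m$ forces the coordinates of $w$ in the standard basis to lie in $\{\pm 1\}$, each $\gamma_{i,k}$ and every free coefficient of $v_i$ to lie in $\{0,\pm 1\}$, and the supports $U_{v_1},\dots,U_{v_s}$ to be pairwise disjoint.

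The decisive new step is the interaction of the $v_i$'s with $C_t$. The odd-indexed $v_{j,t}$'s contribute coordinates $+1,-1,+1,\dots,-1$ to $w$ along $e_1^t,\dots,e_{c_t}^t$; a $v_i$ with $\gamma_{i,t}\in\{\pm 1\}$ would shift each of these by a constant, producing a coordinate of absolute value $2$, contradicting the coefficient bound. Pairwise disjoint supports permit at most one such $v_i$, so the only option is $\gamma_{i,t}=0$ for every $i$. Consequently the sets $S_i:=\{k:\gamma_{i,k}\ne 0\}$ are pairwise disjoint nonempty subsets of $\{1,\dots,t-1\}$ of odd cardinality (so that $\sum_{k\in S_i}\gamma_{i,k}=1$ with $\pm 1$ entries). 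Inserting $|a_i|\geq\sum_{k\in S_i}c_k$ into~\eqref{e:p2id} together with $c_k\ge 2$ and using $s\geq t-1$ from Lemma~\ref{l:-2c}$(3)$ saturates all inequalities, forcing $|S_i|=1$, no free component for any $v_i$, and $s=t-1$. Reordering so that $S_i=\{i\}$ gives $a_i=-c_i$ for $i=1,\dots,t-1$ and the explicit embedding of Figure~\ref{f:p3a}. The main obstacle will be pinning down the coefficient bookkeeping on $U_{C_t}$, which has no analogue in Lemma~\ref{l:p1}.
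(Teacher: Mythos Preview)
Your proof is correct and follows essentially the same route as the paper: compute the Wu class, use $w\cdot w=-m$ to force $\pm 1$ coefficients, show $U_{C_t}\cap U_{v_i}=\emptyset$, and then count. Your sets $S_i$ play the role of the paper's set $\Delta$; the two counting arguments are dual formulations of the same bound $s\le t-1$.

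There is one small circularity to repair. You assert pairwise disjoint supports ``exactly as in Lemma~\ref{l:p1}'' and then, in the next paragraph, invoke this disjointness (``pairwise disjoint supports permit at most one such $v_i$'') to conclude $\gamma_{i,t}=0$. But the Lemma~\ref{l:p1} argument for disjointness relies on $w=\sum_i v_i$, whereas here $w$ carries an extra summand supported on $U_{C_t}$; the argument therefore only yields disjointness \emph{off} $U_{C_t}$, which is exactly the region where you still need it. The clean fix bypasses disjointness entirely: since $\gamma_{i,t}=x(i)^t_j$ is independent of $j$ while the Wu constraint forces $x(i)^t_j\in\{0,\beta^t_j\}$, a nonzero $\gamma_{i,t}$ would have to equal $\beta^t_j$ for every $j$; but $\beta^t_1-\beta^t_2=\epsilon_1-\epsilon_2=2$, so $\gamma_{i,t}=0$ for all $i$. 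This is precisely the step the paper compresses into the sentence ``it follows that $U_{C_t}\cap U_{v_i}=\emptyset$ for every $i$''. Once this is in place, disjointness of the $U_{v_i}$ holds everywhere and the rest of your argument goes through unchanged.
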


\begin{figure}[h]
\begin{center}
\frag[ss]{-e1tn-e1tn-e1n}{$-\!e_{1}^{1}\!-\!e_{1}^{2}\!-\!...\!-\!e_{1}^{t}$}
\frag[ss]{e1t1}{$e_{1}^{1}\!-\!e_{2}^{1}$}
\frag[ss]{e2}{$e_{2}^{1}\!-\!e_{3}^{1}$}
\frag[ss]{e1a}{$e_{c_{1}\!-\!1}^{1}\!-e_{c_{1}}^{1}$}
\frag[ss]{e1}{$e_{1}^{2}\!-\!e_{2}^{2}$}
\frag[ss]{e3}{$e_{2}^{2}\!-\!e_{3}^{2}$}
\frag[ss]{e4}{$e_{c_{2}-1}^{2}\!-\!e_{c_{2}}^{2}$}
\frag[ss]{e5}{$e_{1}^{t}\!-\!e_{2}^t$}
\frag[ss]{e6}{$e_{2}^{t}\!-\!e_{3}^{t}$}
\frag[ss]{e7}{$e_{c_{t}-1}^{t}\!-\!e_{c_{t}}^{t}$}
\frag[ss]{a1}{$e_{1}^{1}+e_{2}^{1}+...+e_{c_{1}}^{1}$}
\frag[ss]{a2}{$e_{1}^{2}+e_{2}^{2}+...+e_{c_{2}}^{2}$}
\frag[ss]{a3}{$e_{1}^{t-2}+e_{2}^{t-2}+...+e_{c_{t-2}}^{t-2}$}
\frag[ss]{a4}{$e_{1}^{t-1}+e_{2}^{t-1}+...+e_{c_{t-1}}^{t-1}$}
\frag[ss]{Ce}{$c_t\equiv 0$(mod 2)}
\includegraphics[scale=0.55]{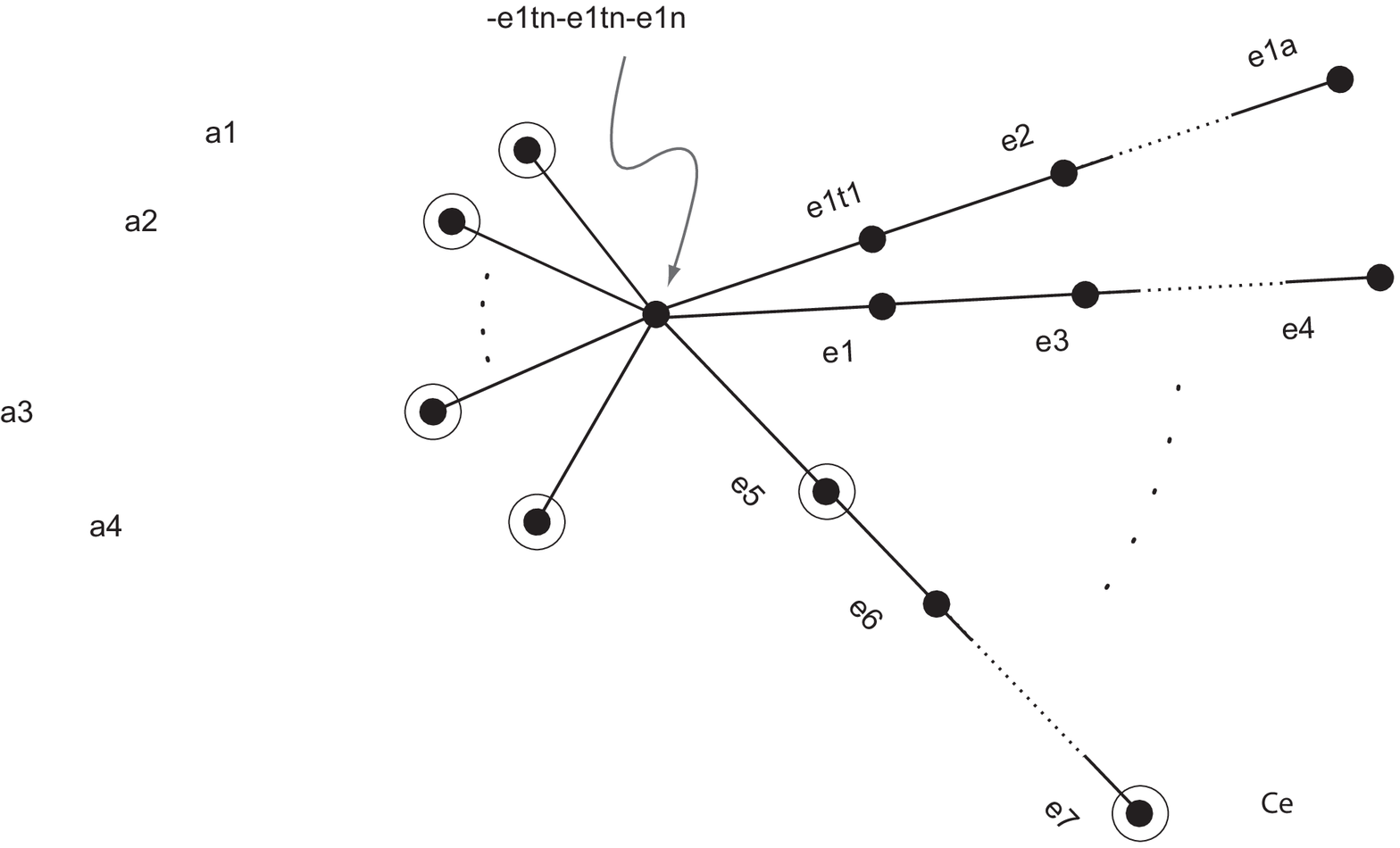}
\hcaption{Up to a change of basis the embedding of the canonical negative plumbing graph for family \textbf{(p2)},  where $c_t>0$ is the only even parameter. The basis of $\Z^{m}$ being $\{e_{j}^{k}\}$ where $k\in\{1,...,t\}$ and $j\in\{1,...,c_{k}\}$.}
\label{f:p3a}
\end{center}
\end{figure}

\begin{proof}
By assumption, the parameters defining $Y$ verify \textbf{(p2)} and therefore there is exactly one even parameter and moreover, it is positive. Without loss of generality, we fix it to be $c_t$. The homology Wu class $w$, depicted in Figure~\ref{f:Wuset}, is 
$$w=v_{1,t}+v_{3,t}+...+v_{c_t-1,t}+\sum_{i=1}^{s}v_{i},$$
where $v_{i}$ is the vertex in the graph $\Psi$ with weight $a_{i}$ and $v_{2j-1,t}\in C_{t}$ for all $j\in\{1,...,\frac{c_t}{2}\}$.

From the assumptions $\sigma(P_\Psi)=0$ and the existence of an embedding of the intersection lattice associated to $\Psi$ into the standard negative diagonal lattice of rank $m$, it follows, exactly as in the proof of Lemma~\ref{l:p1}, that for every $i,\ell\in\{1,...,s\}$ with $i\neq\ell$, the vector $v_{i}$ is the linear combination of exactly $|a_{i}|$ vectors from the basis $e_j^k$ with coefficients $\pm 1$ and that $U_{v_{i}}\cap U_{v_{\ell}}=\emptyset$. Since it holds $w=\sum_{j,k}\beta_{j}^k e_{j}^k$, where $\beta_{j}^k\in\{\pm 1\}$, it follows that $U_{C_{t}}\cap U_{v_{i}}=\emptyset$ for every $i$. Moreover, we deduce that for each $(-2)$--chain $C_{k}$, $k\in\{1,...,t-1\}$, there must exist one and only one $i(k)\in\{1,...,s\}$ such that $U_{v_{i(k)}}\cap U_{C_{k}}\neq\emptyset$. Notice that we do not yet exclude the possibility $i(k)=i(k')$ for $k\neq k'$. Since $C_{k}$ is orthogonal to $v_{i(k)}$, it follows  that $U_{C_{k}}\subseteq U_{v_i(k)}$, which implies in particular that $c_{k}\leq |a_{i(k)}|$ for every $k\in\{1,...,t-1\}$.

Following the proof of Lemma~\ref{l:p1} we define the set $\Delta:=\{v_i\subset w|\,U_{v_i}\cap U_{C_k}=\emptyset\ \forall k\in\{1,...,t-1\}\}$. Observe that this time we have $|\Delta|\geq s-t+1$. Arguing as in Lemma~\ref{l:p1} we obtain the condition $2|\Delta|\leq |U_\Delta|$, which gives $s\leq t-1$. On the other hand, the embedding of the $(-2)$--chains requires, by Lemma~\ref{l:-2c}$(3)$, that $s\geq t-1$. Moreover, $s=t-1$ implies that
for every $i\in\{1,..,s\}$, there exists at most one $k\in\{1,...,t-1\}$ such that $U_{v_i}\cap U_{C_k}\neq\emptyset$ (hence $\Delta=\emptyset$). Indeed, if this were not the case, then $|\Delta|\geq s-t+2$ and the inequality $2|\Delta|\leq |U_\Delta|$ would lead to the contradiction $s\leq t-3$. Therefore, from now on, we assume without loss of generality that $i(k)=k$ for every $k\in\{1,...,t-1\}$.

We have the following equalities.
\begin{align}\label{e:dif2}
\sum_{k=1}^{t-1}c_{k}+c_t=m=\underbrace{-\sigma(P_\Psi)}_{=0}-w\cdot w=-\sum_{k=1}^{t-1}a_{k}+c_t=\sum_{k=1}^{t-1}|a_{k}|+c_t.
\end{align}

Recall that, for every $k\in\{1,...,t-1\}$, it holds $c_k\leq |a_k|$. Then, equalities \eqref{e:dif2} force $c_{k}=|a_{k}|$ for every $k\in\{1,...,t-1\}$, while $c_t$, the only even parameter, has no constraints. In this way we have proved the relationship among the parameters $a_1,...,a_{t-1},c_1,...,c_t$ claimed in the statement. The embedding of the $(-2)$--chains and of the central vertex follows from Lemma~\ref{l:-2c} and at this point it is immediate to check that the rest of the embedding must be as suggested in Figure~\ref{f:p3a}.
\end{proof}

\begin{lem}\label{l:p3}
Assume {\normalfont(SC)} and that $a_1,...,a_s,c_1,...,c_t$ satisfy {\normalfont\textbf{(p3)}}. Then $s=t+1$ and $$Y=Y(-c_{1}-\lambda^2-(c_1-\lambda)^2,-c_{1}-2,-c_2,...,-c_{t};c_1,...,c_t)$$ 
for some $\lambda\in\Z$. Moreover, the embedding into the diagonal negative lattice is, up to a basis change, the one in Figure~\ref{f:p3b}.
\end{lem}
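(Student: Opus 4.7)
The plan is to follow the template of Lemmas~\ref{l:p1} and~\ref{l:p2}, adapted to the new feature that two ``extra'' basis vectors beyond those attached to the $(-2)$-chains must now be accounted for (the expected $s=t+1$ forces $m=2+\sum_{k}c_k$). First I would extract the Wu class for family~\textbf{(p3)} from the algorithm of Section~\ref{s:muu}: since $a_1$ is the unique even parameter and all chain lengths $c_k-1$ are even, the $(-2)$-chains reduce via Move 1 without contribution to the Wu set, after which Move 1 applied to the leaf $v_1$ erases both $v_1$ and $v_0$, leaving $v_2,\dots,v_s$ as isolated odd-weighted vertices. The parity identity $n=s+t\equiv 1\,(\mathrm{mod}\,2)$ forces $t$ and $s-1$ to have the same parity, so the undo step for $v_1$ keeps $v_1$ out of the Wu set. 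Hence $w=v_2+\cdots+v_s$, and Saveliev's formula~\eqref{e:Sa} with $\mathrm{sign}(Q_\Psi)=-m$ gives $w\cdot w=-m$, i.e.\ $\sum_{i=2}^{s}|a_i|=m$.

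Next I would repeat the core calculation from Lemma~\ref{l:p1}. Writing $w=\sum_{j,k}\beta_j^ke_j^k$ with $\beta_j^k\in\{\pm 1\}$, the identity $w\cdot v_i=v_i\cdot v_i$ forces each $v_i$ with $i\geq 2$ to be a signed sum of exactly $|a_i|$ basis vectors whose signs match those of $w$, with $U_{v_i}\cap U_{v_\ell}=\emptyset$ for distinct $i,\ell\geq 2$. Thus $\{U_{v_i}\}_{i=2,\dots,s}$ partitions the basis $\catE$. Using Lemma~\ref{l:-2c} and $v_i\cdot v_{j,k}=0$, the set $U_{v_i}$ either contains the full block $\{e_1^k,\dots,e_{c_k}^k\}$ or is disjoint from it; and since $v_0\cdot v_i=1$ with $v_0=-\sum_k e_1^k$, exactly one block is absorbed per $v_i$. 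Because every $e_1^k$ must lie in some $U_{v_i}$, this assignment is a bijection, giving $s-1=t$ and, after relabeling, $v_i$ absorbing chain $C_{i-1}$. Counting then yields $\sum_{i=2}^{s}(|a_i|-c_{i-1})=m-\sum_kc_k=2$, with all summands nonnegative and even (odd minus odd). The only possibility is that a single vertex, WLOG $v_2$, absorbs both extras with $|a_2|=c_1+2$, while $|a_i|=c_{i-1}$ for $i\geq 3$.

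Finally, the remaining degree of freedom lies in $v_1$. Orthogonality with the chain vertices $v_{j,k}=e_j^k-e_{j+1}^k$ forces the $v_1$-coordinate along each block $C_k$ to be constant in $j$, say $\gamma^k$. Orthogonality with $v_i$ for $i\geq 3$ gives $\gamma^k=0$ for $k\geq 2$; the relation $v_0\cdot v_1=1$ fixes $\gamma^1=1$; and orthogonality with $v_2$ (which uses block $C_1$ together with both extras) combined with $v_1\cdot v_1=a_1$ expresses the coefficients $\delta_1,\delta_2$ of $v_1$ along the two extras via a single integer $\lambda\in\Z$ with $\delta_1=-\lambda$ and $\delta_2=\lambda-c_1$. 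Reading off $v_1\cdot v_1$ then yields $a_1=-c_1-\lambda^2-(c_1-\lambda)^2$, and the explicit embedding is that shown in Figure~\ref{f:p3b}. The main bookkeeping obstacle is tracking the two extras cleanly and invoking the parity constraint from~\textbf{(p3)} to rule out the alternative distribution in which two distinct vertices each absorb a single extra.
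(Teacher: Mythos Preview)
Your argument works cleanly once $s=t+1$ is established, but the step that is supposed to force $s-1=t$ directly is not justified. You write ``since $v_0\cdot v_i=1$ with $v_0=-\sum_k e_1^k$, exactly one block is absorbed per $v_i$,'' and from this conclude a bijection between Wu vertices and chains. However, with the embedding of Lemma~\ref{l:-2c} one computes
\[
v_i\cdot v_0=\sum_{k:\,U_{C_k}\subset U_{v_i}}\epsilon_k,
\]
where $\epsilon_k\in\{\pm1\}$ is the (constant) sign of $v_i$ along the block $C_k$. This sum equals $1$ whenever the number of absorbed blocks is odd with one more $+1$ than $-1$; in particular three blocks with signs $+1,+1,-1$ is perfectly compatible with $v_0\cdot v_i=1$. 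So your partition argument only yields that each Wu vertex absorbs an \emph{odd} number of blocks, hence $s-1\leq t$; together with Lemma~\ref{l:-2c}(3) and the parity from \textbf{(p3)} this leaves both $s=t-1$ and $s=t+1$ open.

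The case $s=t-1$ genuinely requires extra work and this is where the paper spends most of its effort: when $s=t-1$ there are no ``extra'' basis vectors, one Wu vertex must absorb three chains, and the contradiction comes only after bringing $v_1$ into play and using that $a_1$ is even while all $c_k$ are odd (the paper's cases $(i)$ and $(ii)$). Your proposal never treats $v_1$ until after $s=t+1$ is assumed, so this case is simply missing. Once you add that analysis---or find an alternative way to exclude a Wu vertex absorbing three blocks---the remainder of your argument for $s=t+1$ (the even-difference count forcing a single vertex with $|a_2|=c_1+2$, and the determination of $v_1$ via $\gamma^k$, $\delta_1$, $\delta_2$) is correct and matches the paper's.
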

\begin{figure}[h]
\begin{center}
\frag[ss]{-e1tn-e1tn-e1n}{$-\!e_{1}^{1}\!-\!e_{1}^{2}\!-\!...\!-\!e_{1}^{t}$}
\frag[ss]{e1t1}{$e_{1}^{1}\!-\!e_{2}^{1}$}
\frag[ss]{e2}{$e_{2}^{1}\!-\!e_{3}^{1}$}
\frag[ss]{e1a}{$e_{c_{1}\!-\!1}^{1}\!-e_{c_{1}}^{1}$}
\frag[ss]{e1}{$e_{1}^{2}\!-\!e_{2}^{2}$}
\frag[ss]{e3}{$e_{2}^{2}\!-\!e_{3}^{2}$}
\frag[ss]{e4}{$e_{c_{2}-1}^{2}\!-\!e_{c_{2}}^{2}$}
\frag[ss]{e5}{$e_{1}^{t}\!-\!e_{2}^t$}
\frag[ss]{e6}{$e_{2}^{t}\!-\!e_{3}^{t}$}
\frag[ss]{e7}{$e_{c_{t}-1}^{t}\!-\!e_{c_{t}}^{t}$}
\frag[ss]{a1}{$e_{1}^{1}+e_{2}^{1}+...+e_{c_{1}}^{1}\!\!-\!\lambda e_{1}^0-(c_1\!-\!\lambda) e_{2}^0$}
\frag[ss]{a2}{$e_{1}^{1}+e_{2}^{1}+...+e_{c_{1}}^{1}+e_{1}^0+e_{2}^0$}
\frag[ss]{a3}{$e_{1}^{2}+e_{2}^{2}+...+e_{c_{2}}^{2}$}
\frag[ss]{a4}{$e_{1}^{t}+e_{2}^{t}+...+c_{c_t}^t$}
\includegraphics[scale=0.55]{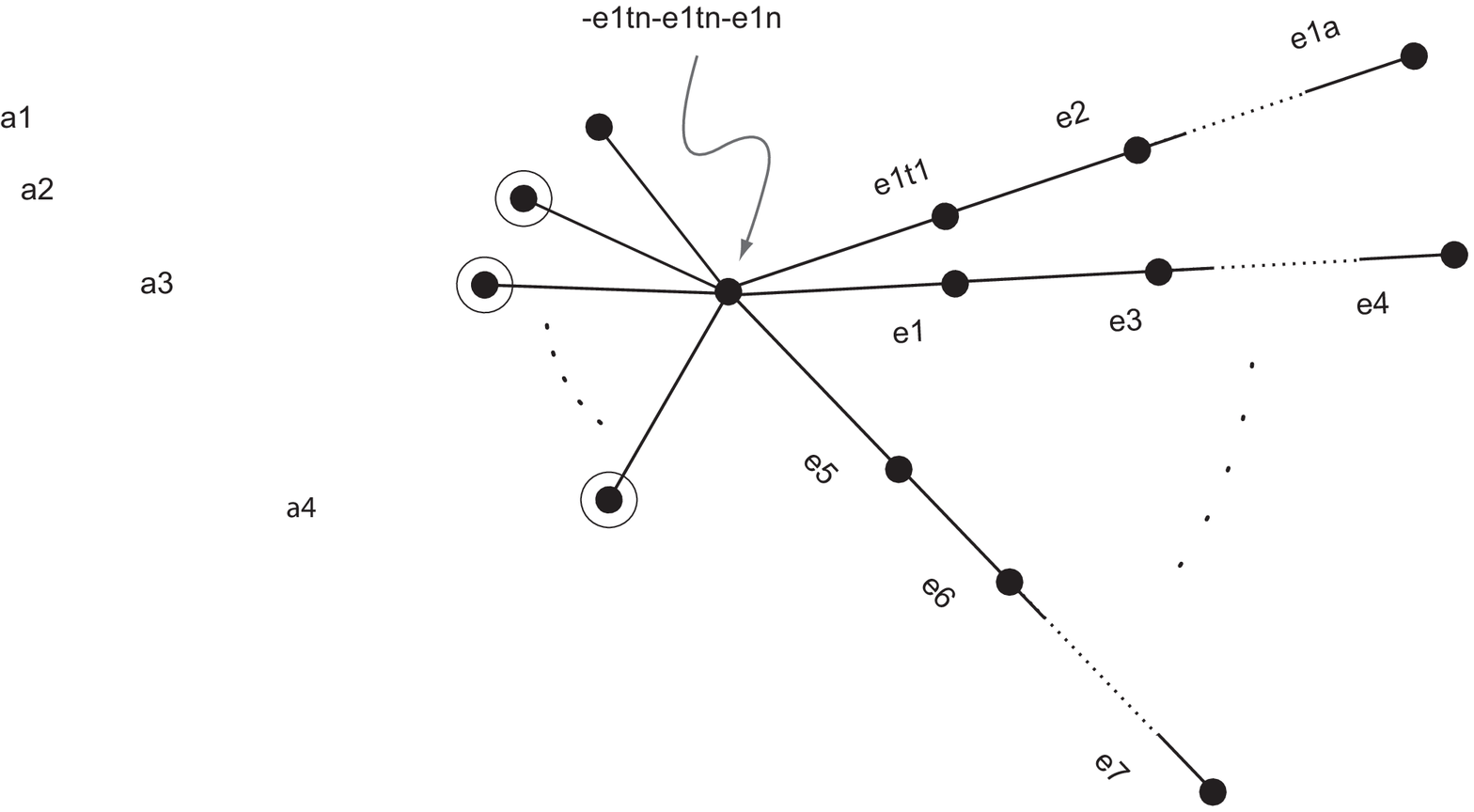}
\hcaption{Up to a change of basis the embedding of a plumbing graph in family \textbf{(p3)}. The basis of $\Z^{m}$ being $\{e^0_1,e^0_2,e_{j}^{k}\}$ where $k\in\{1,...,t\}$ and $j\in\{1,...,c_{k}\}$.}
\label{f:p3b}
\end{center}
\end{figure}

\begin{proof}
By assumption \textbf{(p3)}, among $a_1,...,a_s,c_1,...,c_t$ there is exactly one even parameter  and this parameter is negative. Without loss of generality we fix it to be $a_1$. The homology Wu class $w$, depicted in Figure~\ref{f:Wuset}, is $w=\sum_{i=2}^{t}v_{i}$. 

We start with the same argument of Lemma~\ref{l:p1}. From the assumptions $\sigma (P_\Psi)=0$ and the existence of an embedding of the intersection lattice associated to $\Psi$ into the standard negative diagonal lattice of rank $m$, it follows that for every $i,\ell\in\{2,...,s\}$ with $i\neq\ell$, $v_{i}$ is the linear combination of exactly $|a_{i}|$ vectors from the basis $e_j^k$ with coefficients $\pm 1$ and that $U_{v_{i}}\cap U_{v_{\ell}}=\emptyset$. 

Next, by Lemma~\ref{l:-2c}(3) the embedding of the $(-2)$--chains is possible only if $s\geq t-1$. On the other hand, being $\Psi$ the canonical negative plumbing graph, it holds $v_0\cdot v_0=-t$. This implies, since for every $i,j\in\{2,...,s\}$ with $i\neq j$ we have $v_i\cdot v_0=1$ and $U_{v_{i}}\cap U_{v_{j}}=\emptyset$, that $s-1\leq t$. Hence, $s\in\{t-1,t,t+1\}$. Moreover, by assumption \textbf{(p3)}, we know $s+t\equiv 1$(mod 2), which forces $s\in\{t-1,t+1\}$. 

We assume first $s=t-1$. In this case the Wu set of $\Psi$ consists of the $t-2$ vertices $v_2,...,v_{t-1}$. These vertices are connected to the central vertex, which has weight $-t$. Recall that for every $i,j\in\{2,...,t-1\}$ with $i\neq j$, we have $U_{v_i}\cap U_{v_j}=\emptyset$ and that $m=|U_w|$. It follows that either
\begin{itemize}
\item[$(i)$]there exist two different indices $i,j\in\{2,...,t-1\}$ such that $|U_{v_i}\cap U_{v_0}|=|U_{v_j}\cap U_{v_0}|=2$; or
\item[$(ii)$] there exists exactly one $i\in\{2,...,t-1\}$ such that $|U_{v_i}\cap U_{v_0}|=3$.
\end{itemize}
We will show that both $(i)$ and $(ii)$ lead to contradiction. In fact, if $(i)$ holds, call $e_r^k,e_s^\ell$ the two basis vectors in $U_{v_j}\cap U_{v_0}$. Then, the equalities $|e_r^k\cdot v_j|=|e_r^k\cdot v_0|=|e_s^\ell\cdot v_j|=|e_s^\ell\cdot v_0|=1$ are incompatible with $v_j\cdot v_0=1$. Suppose now that $(ii)$ holds and consider the vertex $v_1$. Since $v_1\cdot v_0=1$, there exists $h\in\{2,...,t-1\}$ and $k\in\{1,...,t\}$ such that $U_{v_h}\cap U_{C_k}\cap U_{v_1}\neq\emptyset$. Moreover, Since $v_1$ and $v_h$ are orthogonal to $C_k$, we have that $U_{C_k}\subseteq U_{v_1}\cap U_{v_h}$. The index $h$ must be the index $i$ in the statement of $(ii)$; otherwise we would have $U_{v_h}=U_{C_k}$, which leads $v_h\cdot v_1\neq 0$. At this point, there are two possibilities: 
\begin{itemize}
\item $|U_{v_1}\cap U_{v_i}\cap U_{v_0}|=2$. In this case there are two $(-2)$--chains, say $C_k$ and $C_\ell$, such that $U_{C_k\cup U_{C_\ell}}\subseteq U_{v_1}$. Since $v_1\cdot v_0=1$ and $v_1$ is orthogonal to $C_k$ and $C_\ell$, there must exist some $\lambda\in\Z$ such that 
$$v_1=-\lambda e_1^{k}-...-\lambda e_{c_k}^k+(\lambda +1)e_1^\ell+...+(\lambda+1)e_{c_\ell}^\ell,$$
where we use the convention fixed in Lemma~\ref{l:-2c} for the embedding of the $(-2)$--chains. Since by assumption the parameters $a_1,...,a_{t-1};c_1,...,c_t$ satisfy \textbf{(p3)}, we have that $a_1$ is even and $c_k,c_\ell$ are odd. However, $v_1\cdot v_1=-\lambda^2 c_k-(\lambda+1)^2 c_\ell$, which is in contradiction with $a_1$ being even.
\item $|U_{v_1}\cap U_{v_i}\cap U_{v_0}|=3$. This time we have three $(-2)$--chains, say $C_k,C_\ell$ and $C_p$, such that
$$v_1=\lambda e_1^{k}+...+\lambda e_{c_k}^{k}+\rho e_1^{\ell}+...+\rho e_{c_\ell}^{\ell}+\delta e_1^{p}+...+\delta e_{c_p}^{p},$$
for some $\lambda,\rho,\delta\in\Z$. Since $a_1$ is even and $c_k,c_\ell$ and $c_p$ are odd, there are two possibilities for the coefficients $\lambda,\rho$ and $\delta$: the three of them are even or two of them are odd and one is even. In both cases we have $v_1\cdot v_0=\lambda+\rho+\delta\neq 1$.
\end{itemize}
After this final contradiction we conclude that, under the assumptions of the statement, it is not possible to have $s=t-1$.

We are only left with analyzing the case $s=t+1$. Since $m=|U_w|$ and since for every $i,j\in\{2,...,t+1\}$ with $i\neq j$, we have $U_{v_i}\cap U_{v_j}=\emptyset$, we deduce that for each $(-2)$--chain $C_{k}$, $k\in\{1,...,t\}$, there must exist one and only one $i(k)\in\{2,...,s\}$ such that $U_{v_{i(k)}}\cap U_{C_{k}}\neq\emptyset$. Moreover, since there are $t$ vertices in the Wu set and the central vertex has weight $-t$, it also holds that if $i(k)=i(k')$, then $k=k'$. We fix, without loss of generality, that $i(k)=k+1$ for every $k\in\{1,...,t\}$. Since $C_{k}$ is orthogonal to $v_{k+1}$, it follows that $U_{C_{k}}\subseteq U_{v_{k+1}}$, which implies in particular that $c_{k}\leq |a_{k+1}|$ for every $k$.

The assumption $s=t+1$ and Lemma~\ref{l:-2c}\,$(1)$ imply that 
$$m=s+1+\sum_{k=1}^t (c_k-1)=2+\sum_{k=1}^t c_k=2+\sum_{k=1}^t|U_{C_k}|.$$
Therefore, since $m=|U_w|$, there are exactly two basis vectors, say $e_1^0$ and $e_2^0$, such that $e_1^0,e_2^0\in U_w$ and for every $k\in\{1,...,t\}$ we have $e_1^0,e_2^0\not\in U_{C_k}$. Since $w=\sum_{i=2}^{t}v_{i}=\sum_{j,\ell}\beta_j^\ell e_j^\ell$ with $\beta_j^\ell\in\{\pm 1\}$ and $a_2,...,a_{t+1};c_1,...,c_t$ are odd, we necessarily have that there exists one vector, say $v_2$, such that $e_1^0,e_2^0\in U_{v_2}$. Note that this implies that for every $i\in\{3,...,t+1\}$, we have $a_i=-c_{i-1}$. At this point, using Lemma~\ref{l:-2c} to fix the embedding of the $(-2)$--chains and the central vertex, it follows that the embedding of $v_0,v_3,...,v_{t+1},C_1,...,C_t$ is as claimed in Figure~\ref{f:p3b}. 

In order to conclude, we must determine the embedding of $v_1$ and $v_2$. Since $v_2\subset w$ and $\{e_1^0,e_2^0\}\cup U_{C_1}=U_{v_2}$ it follows that, up to the sign of $e_1^0$ and $e_2^0$, we have $v_2=e_1^1+...+e_{c_1}^1+e_1^0+e_2^0$. The vector $v_1$, which is the only one with even square, is connected to the central vertex, and since it is orthogonal to all the other vertices, we necessarily have $U_{C_1}\subseteq U_{v_1}$ and $U_{v_1}\cap U_{v_i}=\emptyset$ for every $i\in\{3,...,t+1\}$. Since $v_1\cdot v_2=0$ we deduce that there exist $\lambda\in\Z$ such that $v_1=e_1^1+...+e_{c_1}^1-\lambda e_1^0-(c_1-\lambda) e_2^0$.
\end{proof}

\section{Remaining cases}\label{s:step2}

The Lemmas~\ref{l:p1} and \ref{l:p2} proved in the last section imply Theorem~\ref{t:slice} for pretzel knots in families \textbf{(p1)} and \textbf{(p2)}. The ``only'' thing left now is to understand what happens with the Seifert spaces from Lemma~\ref{l:p3}. In this section we introduce the correction terms from Heegaard-Floer homology and explain how to use them in combination with Donaldson's theorem to get further obstructions to sliceness. This enhanced obstruction suffices to reduce the candidates to Seifert spaces bounding rational homology balls in Lemma~\ref{l:p3} to the double branched covers of the knots studied in Section~\ref{s:two}.

\subsection{Correction terms from Heegaard-Floer homology}\label{s:d}
Ozsv\'ath and Szab\'o, using techniques from Heegaard Floer homology, showed that if $Y$ is a rational homology sphere bounding a rational homology ball $W$, then, for each $\Spin^c$-structure $\mathfrak{s}$ on $Y$ which extends over $W$, the so called correction term $d(Y,\mathfrak{s})$ vanishes. Details can be found in \cite{b:OS}. We summarize here, following \cite{b:GJ}, a particular way of using this information as an obstruction for a Seifert space to bound a rational homology ball.

We denote by $\Spin^c(W)$ the set of $\Spin^c$ structures on the manifold $W$. This set admits a free transitive action of the group $H^2(W;\Z)$ and the following diagram, where the horizontal arrows give the action, $r$ is the restriction and $i^*$ is induced by the inclusion, commutes.
\begin{align*}
\xymatrix{
H^2(W;\Z)\times\Spin^c(W) \ar[r] \ar[d]_{(i^*,r)} 
& 
\Spin^c(W) \ar[d]^r 
\\
H^2(Y;\Z)\times\Spin^c(Y) \ar[r] 
&  
\Spin^c(Y) 
}
\end{align*}

After fixing a reference $\Spin^c$ structure on $W$, say $\mathfrak{s_0}$, we are able to identify $H^2(W;\Z)$ with $\Spin^c(W)$ as $\Spin^c(W)=\mathfrak{s_0}+H^2(W;\Z)$ and analogously $\Spin^c(Y)=r(\mathfrak{s_0})+H^2(Y;\Z)$. Therefore, it follows from the above diagram and Ozsv\'ath and Szab\'o's result that if $Y$ bounds a rational homology ball $W$ then for every $\mathfrak{s}\in r(\Spin^c(W))\subseteq\Spin^c(Y)$ the correction term $d(Y,\mathfrak{s})$ vanishes and we can identify $r(\Spin^c(W))$ with $r(\mathfrak{s_0})+i^*(H^2(W;\Z))$. We shall denote  by $V$ the subgroup $i^*(H^2(W;\Z))\subseteq H^2(Y;\Z)$.

\begin{rem}\label{r:order}
Notice that the group $V$ defined above coincides with the group $V$ in Theorem~\ref{t:CG}. There is in fact a striking formal similarity between the Casson-Gordon invariants and the $d$-invariants of a $3$-manifold. Nevertheless these two invariants detect different phenomena. For example we shall see in the next subsection that the correction terms do not obstruct the existence of a rational homology ball bounded by $Y(a,-a-2,-a-\frac{a^{2}+9}{2})$ while the Casson-Gordon invariants do obstruct (cf.\ Section~\ref{s:step3} and Remark~\ref{r:ratballs}).
\end{rem}

Given a slice knot $K\subset S^3$, the $2$-fold cover of $D^4$ branched over a slicing disc $D$ of $K$ is a rational homology ball $W_D$ \cite[Lemma~17.2]{b:Ka}. Hence, we can specialize the above discussion to $W_D$ and $Y=\partial W_D$, which is the $2$-fold cover of $S^3$ branched over $K$. In case $K$ is a Montesinos knot, let $\Gamma$ be its associated plumbing graph with $m$ vertices, and let us further assume that the associated incidence matrix $Q_\Gamma$ is negative definite. Let us call $Y=Y_\Gamma=\partial M_\Gamma$ the boundary of the plumbing $4$-manifold $M_\Gamma$ associated to $\Gamma$.

The manifold $M_\Gamma\cup_{Y_\Gamma}(-W_D)$ is a closed, oriented, negative definite, $4$-manifold and hence, by Donaldson's theorem, there exists a basis $\tilde e_1,...,\tilde e_m$ of $H^2(M_\Gamma\cup_{Y_\Gamma}(-W_D);\Z)$ with respect to which the intersection pairing is represented by $-\mathrm{Id}$. Let $\Sigma_1,...,\Sigma_m\in H_2(M_\Gamma;\Z)$ be a basis determined by the $m$ $2$-handles of $M_\Gamma$ and let $D_1,...,D_m\in H_2(M_\Gamma,Y_\Gamma;\Z)$ be a dual basis determined by their cocores. With these choices of bases, the map $\eta:H_2(M_\Gamma;\Z)\rightarrow H_2(M_\Gamma,Y_\Gamma;\Z)$ is represented by the incidence matrix $Q_\Gamma$ of the graph $\Gamma$. The application in cohomology, $\gamma:H^2(M_\Gamma,Y_\Gamma;\Z)\rightarrow H^2(M_\Gamma;\Z)$, is again represented by the matrix $Q_\Gamma$, when we consider in $H^2(M_\Gamma,Y_\Gamma;\Z)$ the basis $\Sigma_1^\ast,...,\Sigma_m^\ast$ of the Poincar\'e duals of the $\Sigma_i$ and in $H^2(M_\Gamma;\Z)$ the basis $\widetilde\Sigma_1,...,\widetilde\Sigma_m$ of the Hom-duals of the $\Sigma_i$: $\widetilde\Sigma_i(\Sigma_j)=\delta_{ij}$. The long exact sequence in cohomology of the pair $(M_\Gamma,Y_\Gamma)$ 
\begin{equation}\label{e:ex_seq_Dsec}
\xymatrix{
0\ar[r] 
&
H^2(M_\Gamma, Y_\Gamma;\Z)
\ar@{}_{\mbox{\Large$\shortparallel$}}[d] 
\ar[r]^{Q_\Gamma} 
&
H^2(M_\Gamma;\Z) 
\ar@{}_{\mbox{\Large$\shortparallel$}}[d] 
\ar[r]^{\delta} 
&
H^2(Y_\Gamma;\Z)
\ar@{}_{\mbox{\Large$\shortparallel$}}[d]
\ar[r] 
& 
0
\\
& \Z^m\ & \Z^m\ & \Z^m/Q_\Gamma\Z^m&
}
\end{equation}
allows us to identify $H^2(Y_\Gamma;\Z)$ with the cokernel of $Q_\Gamma$ via $\delta$. Therefore, we can further identify $\Spin^c(Y)=r(\mathfrak{s}_0)+\mathrm{coker}\,Q_\Gamma$. Using some algebraic topology one can obtain Theorem~3.4 in \cite{b:GJ}, which in our notation states:

\begin{thm}[Greene, Jabuka]\label{t:GJ}
Let $K,\ W_D,\ Y_\Gamma,\ M_\Gamma,\ Q_\Gamma$ be as above. Then, with the above fixed bases, $H^2(M_\Gamma\cup_{Y_\Gamma}(-W_D);\Z)\rightarrow H^2(M_\Gamma;\Z)$ has a matrix representative $A$, which leads to a factorization $Q_\Gamma=-AA^t$. Moreover, $\delta$ in \eqref{e:ex_seq_Dsec} induces an isomorphism
\[(\mathrm{im}\, A)/(\mathrm{im}\, Q_\Gamma) 
\toup^{\cong} 
i^*(H^2(W_D;\Z))\subseteq\underbrace{H^2(Y_\Gamma;\Z)}_{\mathrm{coker} Q_\Gamma}.  
\tag*{\qed}
\]
\end{thm}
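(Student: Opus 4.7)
The plan is to apply Donaldson's theorem to the closed, negative-definite $4$-manifold $X:=M_\Gamma\cup_{Y_\Gamma}(-W_D)$, convert what it gives for the inclusion $j:M_\Gamma\hookrightarrow X$ into the matrix identity $Q_\Gamma=-AA^T$, and then use the Mayer--Vietoris sequence of the decomposition to read off the subgroup of $H^2(Y_\Gamma;\Z)$ hit by $\delta\circ j^*$.

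First I would note that because $W_D$ is a rational homology ball and $Y_\Gamma$ is a rational homology sphere, the rank of $H_2(X;\Z)$ equals $m$. Donaldson then yields a basis $\tilde\epsilon_1,\dots,\tilde\epsilon_m$ of $H_2(X;\Z)/\mathrm{Tors}$ with intersection form $-\mathrm{Id}$; its Poincar\'e-dual basis $\tilde e_1,\dots,\tilde e_m$ of $H^2(X;\Z)$ satisfies $\tilde e_i(\tilde\epsilon_j)=-\delta_{ij}$. Define $A$ as the matrix of $j^*$ in the bases $\{\tilde e_i\}$ and $\{\widetilde\Sigma_k\}$, and $B$ as the matrix of $j_*:H_2(M_\Gamma;\Z)\to H_2(X;\Z)/\mathrm{Tors}$ in $\{\Sigma_i\}$ and $\{\tilde\epsilon_k\}$. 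The naturality identity $\tilde e_i(j_*\Sigma_j)=(j^*\tilde e_i)(\Sigma_j)$ unwinds to $-B_{ij}=A_{ji}$, i.e.\ $A=-B^T$, while naturality of the intersection pairing on classes supported in the interior of $M_\Gamma$ gives $(j_*\Sigma_i)\cdot(j_*\Sigma_j)=\Sigma_i\cdot\Sigma_j$, i.e.\ $-B^TB=Q_\Gamma$. Combining these identities yields $Q_\Gamma=-AA^T$.

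For the second part I would plug the decomposition into the cohomology Mayer--Vietoris sequence
\[
H^1(Y_\Gamma;\Z)\longrightarrow H^2(X;\Z)\xrightarrow{(j^*,k^*)} H^2(M_\Gamma;\Z)\oplus H^2(W_D;\Z)\xrightarrow{i_M^*-i_W^*} H^2(Y_\Gamma;\Z),
\]
where $k:W_D\hookrightarrow X$, $i_M:Y_\Gamma\hookrightarrow M_\Gamma$, $i_W:Y_\Gamma\hookrightarrow W_D$. Since $Y_\Gamma$ is a rational homology sphere $H^1(Y_\Gamma;\Z)=0$, and the pair sequence of $(M_\Gamma,Y_\Gamma)$ together with $H^3(M_\Gamma,Y_\Gamma;\Z)\cong H_1(M_\Gamma;\Z)=0$ forces $\delta=i_M^*$ to be surjective; exactness then makes $k^*$ surjective as well. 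Using $\delta\circ j^*=i_M^*\circ j^*=(j\circ i_M)^*=(k\circ i_W)^*=i_W^*\circ k^*$ one obtains $\delta(\mathrm{im}\,A)=i_W^*(H^2(W_D;\Z))$. The already proved factorization $Q_\Gamma=-AA^T$ gives $\mathrm{im}\,Q_\Gamma\subseteq\mathrm{im}\,A$, so $\ker\delta\cap\mathrm{im}\,A=\mathrm{im}\,Q_\Gamma$, and the induced map $(\mathrm{im}\,A)/(\mathrm{im}\,Q_\Gamma)\xrightarrow{\cong} i^*(H^2(W_D;\Z))$ is an isomorphism.

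The main technical nuisance I expect is keeping careful track of signs in the Poincar\'e-duality identifications --- in particular the $-\delta_{ij}$ coming from the negative-definite intersection form, which is exactly what produces the minus sign in $Q_\Gamma=-AA^T$ --- and remembering to work modulo torsion in $H_2(X;\Z)$ when invoking Donaldson; the identities themselves are essentially formal consequences of naturality of cup products and Kronecker pairings.
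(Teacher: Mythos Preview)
The paper does not actually prove this theorem: it is quoted verbatim as Theorem~3.4 of Greene--Jabuka \cite{b:GJ}, and the displayed statement ends with a \qed\ precisely to indicate that no proof is given here. Your argument is correct and is essentially the standard one (and the one in \cite{b:GJ}): Donaldson diagonalizes the form on $X=M_\Gamma\cup_{Y_\Gamma}(-W_D)$, naturality of the pairing gives $Q_\Gamma=-AA^T$, and the Mayer--Vietoris identity $i_M^*\circ j^*=i_W^*\circ k^*$ together with surjectivity of $i_M^*$ and $k^*$ identifies $\delta(\operatorname{im}A)$ with $i^*(H^2(W_D;\Z))$.

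Two small remarks. First, your one-line justification ``exactness then makes $k^*$ surjective'' deserves the extra sentence you implicitly have in mind: given $w\in H^2(W_D;\Z)$, surjectivity of $i_M^*$ lets you choose $m$ with $i_M^*(m)=i_W^*(w)$, so $(m,w)\in\ker(i_M^*-i_W^*)=\operatorname{im}(j^*,k^*)$ and hence $w\in\operatorname{im}k^*$. Second, when you identify $\operatorname{im}A$ with $\operatorname{im}j^*$ you are tacitly using that $H^2(M_\Gamma;\Z)\cong\Z^m$ is torsion-free, so any torsion in $H^2(X;\Z)$ dies under $j^*$; this is exactly the ``work modulo torsion'' caveat you flag at the end, and it is harmless here.
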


From the above discussion we know that $d(Y_\Gamma,\mathfrak{s})=0$ for every $\mathfrak{s}=r(\mathfrak{s_0})+v$, where $v\in V=i^*(H^2(W_D;\Z))$. Therefore, we must have at least $|V|$  $\Spin^c$-structures on $Y_{\Gamma}$ for which the invariant $d$ vanishes. By Theorem~\ref{t:GJ} we can calculate $|V|$ as 
\begin{equation}\label{e:|V|}
\begin{split}
|V| & =  |(\mathrm{im}\, A)/(\mathrm{im}\, Q_\Gamma)|=|(A\Z^m)/(-AA^t\Z^m)|\\
& =  |\Z^m/(-A^t\Z^m)|=|\det A^t|.
\end{split}
\end{equation}

For $3$-manifolds $Y$ obtained as boundaries of $4$-dimensional plumbings specified by certain graphs, Ozsv\'ath and Szab\'o give in \cite{b:OS} an explicit formula to calculate $d(Y,\mathfrak{s})$. The class of graphs that they consider includes all star-shaped graphs $\Gamma$ such that $Q_\Gamma$ is negative definite. In order to state the formula we need to introduce some definitions and notation. With the above fixed basis, a covector $v\in H^2(M_\Gamma;\Z)$ is called \textbf{characteristic} for $Q_\Gamma$ if it is congruent modulo $2$ to the vector $(Q_{\Gamma_{11}},...,Q_{\Gamma_{mm}})$, whose coordinates in the basis $\widetilde\Sigma_1,...,\widetilde\Sigma_m$ are the elements in the diagonal of $Q_\Gamma$. We will denote by $\Char (Q_\Gamma)\subseteq H^2(M_\Gamma;\Z)$ the set of all characteristic covectors for $Q_\Gamma$. Given a cohomology class $v\in H^2(M_\Gamma;\Z)$ we will write $[v]$ to denote its equivalence class in $\mathrm{coker}\,Q_\Gamma$. For a given $\mathfrak{s}\in\Spin^c(Y_\Gamma)$ we define the set
$$\Char_{\mathfrak{s}}(Q_\Gamma):=\{v\in\Char (Q_\Gamma)|\, \mathfrak{s}=r(\mathfrak{s}_0)+[v]\}.$$
With these conventions and definitions in place, we are ready to state the formula, \cite[Corollary~1.5]{b:OS}, we use for computing correction terms:
\begin{equation}\label{e:OS}
\begin{split}
d(Y_\Gamma,\mathfrak{s})=\max_{v\in \Char_{\mathfrak{s}}(Q_\Gamma)}\frac{v^tQ_\Gamma^{-1} v+|\Gamma|}{4}.
\end{split}
\end{equation}

We already know that for every $\mathfrak{s}\in r(\mathfrak{s}_0)+V$ we have  $d(Y_\Gamma,\mathfrak{s})=0$. Nevertheless, let us compute  $d(Y_\Gamma,\mathfrak{s})$ for $\mathfrak{s}\in r(\mathfrak{s}_0)+V$ with equality \eqref{e:OS}. Greene and Jabuka's result, Theorem~\ref{t:GJ}, shows $V\cong (\mathrm{im}\,A)/(\mathrm{im}\, Q_\Gamma)$, so we consider a cohomology class $v=Ax\in H^2(M_\Gamma;\Z)$. In this case, the term $v^tQ_\Gamma^{-1} v$ from \eqref{e:OS} simplifies to
$$v^tQ_\Gamma^{-1} v=-x^tA^t(AA^t)^{-1}Ax=-x^tA^t(A^t)^{-1}A^{-1}Ax=-|x|^2,$$
and therefore
\begin{equation}\label{e:simplif}
\begin{split}
d(Y_\Gamma,\mathfrak{s})=\max_{Ax\in \Char_{\mathfrak{s}}(Q_\Gamma)}\frac{-|x|^2+m}{4}.
\end{split}
\end{equation}
where $m$ is the number of vertices in $\Gamma$. The requirement that $v=Ax$ be characteristic can be expressed as a condition on $x$ itself. In fact, by definition $v=\sum_{i=1}^m v_i\widetilde\Sigma_i$ is characteristic if and only if $v_i\equiv Q_{\Gamma ii}$ (mod 2) for all $i$. Since, by Theorem~\ref{t:GJ}, $Q_\Gamma=-AA^t$, we have that $v=Ax$ is characteristic if and only if
\begin{equation}\label{e:mod2}
\begin{split}
\sum_{j=1}^{m}A_{ij}x_j\equiv\sum_{j=1}^mA^2_{ij}\ (\mathrm{mod}\ 2)\equiv\sum_{j=1}^mA_{ij}\ (\mathrm{mod}\ 2),\s\forall i=1,...,m.
\end{split}
\end{equation}
Since $\det (Q_\Gamma)$ is the determinant of the knot $K$, it follows that $\det (A)$ is odd and therefore the matrix $A$ is invertible (mod$2$). Hence, the vector $x$(mod$2$) is uniquely determined by equivalence \eqref{e:mod2}. At the same time, it is clear that taking $x_i\equiv 1(\mathrm{mod}\ 2)\ \forall i$, the equivalences in \eqref{e:mod2} are satisfied, so it must be the unique solution. Now, since $d(Y_\Gamma,\mathfrak{s})=0$, equation \eqref{e:simplif} forces $x_i\in\{\pm 1\}$ for all $i\in\{1,...,m\}$. Notice that there are $2^m$ such vectors $v=Ax$ in $\mathrm{im}\,A$, while the number $|V|$ of equivalence classes of these vectors $[v]$ in $V\cong (\mathrm{im}\,A)/(\mathrm{im}\, Q_\Gamma)$ might be significantly smaller. 

In the next subsection we shall compute, for the Seifert spaces in Lemma~\ref{l:p3}, an upper bound $N$ on the number of $\Spin^{c}$ structures with vanishing correction terms. The above arguments imply that the inequality $|V|\leq N$ must hold, which will give further constraints on the invariants defining Seifert spaces in family \textbf{(p3)} that bound rational homology balls.

\subsection{Further study of family (p3)}

\begin{lem}\label{l:usad}
Let $a=\lambda+\rho>1$ be an odd integer, $\lambda,\rho\in\Z$ with $\lambda,\rho\geq 0$. If the Seifert space $Y=Y(-a-\lambda ^2-\rho ^2,-a-2;a)$ bounds a rational homology ball, then either $Y=Y(-a-\frac{a^2+1}{2},-a-2;a)$ or $Y=Y(-a-\frac{a^2+9}{2},-a-2;a)$. 
\end{lem}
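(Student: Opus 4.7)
The plan is to combine the embedding provided by Lemma~\ref{l:p3} with the correction-term obstruction developed in Section~\ref{s:d}. Specializing Lemma~\ref{l:p3} to family \textbf{(p3)} with $t=1$ and $c_{1}=a$ yields the explicit embedding $\iota\colon(\Z^{a+2},Q_{\Psi})\hookrightarrow(\Z^{a+2},-\mathrm{Id})$
$$v_0 = -e_1^1,\qquad v_{i,1}=e_i^1-e_{i+1}^1\ (1\le i\le a-1),$$
$$v_2 = e_1^1+\cdots+e_a^1+e_1^0+e_2^0,\qquad v_1 = e_1^1+\cdots+e_a^1-\lambda e_1^0-\rho e_2^0,$$
with $\rho=a-\lambda$. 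Letting $A$ be the $(a+2)\times(a+2)$ matrix representing $\iota$ in the sense of Theorem~\ref{t:GJ} (so that $Q_{\Psi}=-AA^{t}$), a block-triangular computation --- the central vertex together with the $(-2)$-chain contributes a triangular $a\times a$ block in the $e^1$-coordinates, and $v_1,v_2$ contribute the $2\times 2$ block $\bigl(\begin{smallmatrix}-\lambda & -\rho\\ 1 & 1\end{smallmatrix}\bigr)$ in the $e^0$-coordinates --- gives $|\det A|=|\rho-\lambda|$. By \eqref{e:|V|} this equals $|V|$, and it matches $|H_{1}(Y)|=(\rho-\lambda)^{2}$ computed from \eqref{e:det}.

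Assuming $Y$ bounds a rational homology ball, every $\Spin^{c}$-structure $\mathfrak{s}\in r(\mathfrak{s}_0)+V$ satisfies $d(Y,\mathfrak{s})=0$. The analysis following \eqref{e:simplif}--\eqref{e:mod2} translates this into the existence, for each such $\mathfrak{s}$, of some $x\in\{\pm 1\}^{a+2}$ with $\mathfrak{s}=r(\mathfrak{s}_0)+[Ax]$. Equivalently, the map
$$\Phi\colon \{\pm 1\}^{a+2}\longrightarrow V,\qquad x\longmapsto [Ax]\pmod{\mathrm{im}\,Q_{\Psi}}$$
must be surjective. Note that $|V|=|\rho-\lambda|$ is odd, since $\rho+\lambda=a$ is odd forces $\rho$ and $\lambda$ to have opposite parities.

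Writing $x=\mathbf{1}-2\chi_{S}$ for $S$ ranging over subsets of the $e$-basis, the image of $\Phi$ equals $[A\mathbf{1}]-2\cdot\{\textup{subset sums of the classes }[Ae_j^k]\in V\}$. Since $2$ acts invertibly on the odd-order group $V$, surjectivity of $\Phi$ amounts to requiring that the subset sums of the $[Ae_j^k]$ exhaust $V\cong\Z_{|\rho-\lambda|}$. The individual classes $[Ae_j^k]$ can be read off directly from the columns of $A$, and a direct computation then identifies the exact obstruction: the subset sums exhaust $V$ only when $\lambda(a-\lambda)=\lambda\rho$ equals $(a^{2}-1)/4$ or $(a^{2}-9)/4$, which together with $\lambda+\rho=a$ translates into $\lambda^{2}+\rho^{2}=(a^{2}+1)/2$ or $(a^{2}+9)/2$ and thus into the two Seifert spaces of the statement.

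The principal difficulty will be this final step: verifying that for every $(\lambda,\rho)$ outside the two good pairs, some element of the cyclic group $V\cong\Z_{|\rho-\lambda|}$ fails to be hit as a subset sum of the classes $[Ae_j^k]$. A natural strategy is to work entirely inside $\Z_{|\rho-\lambda|}$, using that $H_1$ of the double branched cover of a knot is cyclic; one then tracks the residues of each $[Ae_j^k]$ explicitly and exhibits a specific missing residue for each forbidden pair. The arithmetic is elementary, but the case analysis requires some care to produce a uniform obstruction valid for all odd $a>1$.
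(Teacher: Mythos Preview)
Your overall strategy matches the paper's: use the explicit embedding of Lemma~\ref{l:p3}, compute $|V|=|\det A|=|\lambda-\rho|$ via~\eqref{e:|V|}, and then invoke the correction-term argument of Section~\ref{s:d} to bound the number of $\Spin^{c}$-structures with $d=0$, hence bound $|V|$. Up through the reduction to surjectivity of $\Phi\colon\{\pm1\}^{a+2}\to V$ your write-up is fine and parallels the paper.

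The gap is in your final step. You assert that ``a direct computation then identifies the exact obstruction'' and that subset sums exhaust $V$ only for the two good pairs, but you do not perform this computation; you then concede that ``the case analysis requires some care to produce a uniform obstruction valid for all odd $a>1$.'' As written, the proof is therefore incomplete: the heart of the argument is promised but not delivered.

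More to the point, the paper bypasses this case analysis entirely with a one-line observation you are missing. Rather than tracking the individual residues $[Ae_j^k]$ and their subset sums in $\Z_{|\rho-\lambda|}$, the paper notes that the first $a+1$ columns of $A^{t}$ span the kernel of the functional $\ell\colon\Z^{a+2}\to\Z$, $\ell(x)=x_{a+1}-x_{a+2}$. Hence $\ker\ell\subset\mathrm{im}\,A^{t}$, so $[Ax]=[Ax']$ in $V$ whenever $\ell(x)=\ell(x')$. But $\ell$ restricted to $\{\pm1\}^{a+2}$ takes only the three values $-2,0,2$, and therefore $|\mathrm{im}\,\Phi|\le 3$. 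This immediately forces $|V|=|\lambda-\rho|\le 3$; oddness of $a$ then rules out $|\lambda-\rho|=2$, leaving $|\lambda-\rho|\in\{1,3\}$ and yielding exactly the two Seifert spaces in the statement. No residue chasing or subset-sum analysis is needed.
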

\begin{proof}
Since $Y$ is the boundary of a rational homology ball, the intersection lattice of its associated canonical negative plumbing graph $\Psi$, which is easily seen to be negative, admits an embedding into $(\Z^m,-\mathrm{Id})$, where $m=a+2$. Let us label the vertices in $\Psi$ as follows: let $v_1,...,v_{a-1}$ be the vertices in the $(-2)$-chain corresponding to the parameter $a$, where $v_1$ is connected to the central vertex; let $v_a$ be the central vertex; let $v_{a+1}$ be the vertex with weight $-a-2$ and finally let $v_{a+2}$ be the remaining vertex. The $2$-handles represented by the vertices $\{v_1,...,v_{a+2}\}$ form a basis of $H_2(M_\Psi;\Z)$. With respect to this basis, the incidence matrix of the graph $Q_\Psi$ is the matrix of the intersection pairing on $M_\Psi$. Since $Y$ satisfies the assumptions of Lemma~\ref{l:p3} we know explicitly the factorization $Q_\Psi=-AA^t$:
$$
A^t:=
\cbra{
\begin{matrix}
1 & 0 &  &  & 0 & -1 & 1 & 1 \\ 
-1 & 1 &  &  & 0 & 0 & 1 & 1 \\ 
0 & -1 & \ddots &  & \vdots & \vdots & 1 & 1 \\ 
\vdots & \vdots &  & \ddots & \vdots & \vdots & \vdots & \vdots \\ 
0 & 0 &  &  & 1 & 0 & 1  & 1 \\ 
0 & 0 &  &  & -1 & 0 & 1 & 1 \\ 
0 & 0 & \hdots  & \hdots & 0 & 0 & 1 & -\lambda \\ 
0 & 0 & \hdots & \hdots & 0 & 0 & 1 & -\rho
\end{matrix} 
}.
$$

By assumption, there exists a rational homology ball $W$ such that $Y=\partial W$. With the notation and conventions fixed in Section~\ref{s:d}, we have that every $\mathfrak{s}\in\Spin^c(Y)$ which extends to $W$ is of the form $r(\mathfrak{s}_0)+v$, where $v\in V\subset H^2(Y;\Z)$. Greene and Jabuka's result, Theorem~\ref{t:GJ}, states that $V\cong (\mathrm{im}\, A)/(\mathrm{im}\, Q_\Psi)$ and by \eqref{e:|V|} we have
\begin{align*}
|V| &=|\det A|\\
& = \sqrt{|\det Q_\Psi|}\\
 &= \sqrt{\abs{\cbra{\frac{1}{a}+\frac{1}{-a-2}+\frac{1}{-a-\lambda ^2-\rho ^2}}a(-a-2)(-a-\lambda ^2-\rho ^2)}}\\
 &=|\lambda-\rho|.
\end{align*}
Notice that we have used equality \eqref{e:det} to calculate $|\det Q_\Psi|$. 

From the discussion in Section~\ref{s:d} we know that, for every cohomology class $v=Ax\in\mathrm{im}\,A$ for which $\mathfrak{s}=r(\mathfrak{s}_0)+v$ satisfies $d(Y,\mathfrak{s})=0$, it holds that $x=(x_1,...,x_m)$ has coordinates $x_i\in\{\pm 1\}$ for all $i$. While there are $2^m$ such $v=Ax$ in $\mathrm{im}\,A$, there are significantly fewer equivalence classes of such vectors $[v]$ in $V=(\mathrm{im}\, A)/(\mathrm{im}\, Q_\Psi)$. In fact, observe that any two cohomology classes, $v=Ax$ and $v'=Ax'$ satisfy $[v]=[v']$ if and only if there exists some $y$ such that $A(x-x')=-AA^ty$. Namely, if and only if $x-x'\in\mathrm{im}\,A^t$. The first $a+1$ columns $\mathrm{col}_1,...,\mathrm{col}_{a+1}$ of $A^t$ generate the kernel of $\ell:\Z^m\longrightarrow\Z$, where $\ell(x_1,...,x_{a+2})=x_{a+1}-x_{a+2}$. In fact, since $\ell(\mathrm{col}_i)=0$ for $i=1,...,a+1$ we have $\mathrm{span}(\mathrm{col}_1,...,\mathrm{col}_{a+1})\subseteq\mathrm{Ker}\,\ell$. On the other hand, for every $y=(y_1,...,y_{a+2})\in\mathrm{Ker}\,\ell$ it holds $y_{a+1}=y_{a+2}$ and since the determinant of the leading principal minor of $A^t$ is $-1$, it follows that $\mathrm{span}(\mathrm{col}_1,...,\mathrm{col}_{a+1})=\mathrm{Ker}\,\ell$. Therefore, $\mathrm{Ker}\,\ell\subset\mathrm{im}\, A^t$ and hence, if $\ell(x)=\ell(x')$ then $[v]=[v']$. The functional $\ell$, when restricted to the set $\{x\in\Z^{a+2}|\, x_i\in\{\pm 1\}\}$, only takes $3$ different values. Thus, $d(Y,\mathfrak{s})$ vanishes for at most $3$ $\Spin^c$ structures in $r(\mathfrak{s}_0)+V$. On the other hand, we know that all the $\Spin^c$ structures of the form $\mathfrak{s}=r(\mathfrak{s}_0)+v$, with $v\in V$, extend to $W$ and therefore, they all have vanishing correction terms. We conclude that $|V|=|\lambda-\rho|\leq 3$. Since $a$ is odd we cannot have $\lambda=\rho +2$, and therefore $\lambda=\rho +1$ or $\lambda=\rho +3$. Taking into account that $a=\lambda+\rho$ and making the pertinent substitutions we obtain the Seifert spaces in the statement.
\end{proof}

\begin{cor}\label{c:duda}
Every Seifert space in Lemma~\ref{l:p3} bounding a rational homology ball is either of the form $Y=Y(-c_1-\frac{c_1^2+1}{2},-c_{1}-2,-c_2,...,-c_{t};c_1,...,c_t)$ or $Y=Y(-c_1-\frac{c_1^2+9}{2},-c_{1}-2,-c_2,...,-c_{t};c_1,...,c_t)$.
\end{cor}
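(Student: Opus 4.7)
The plan is to generalize the argument of Lemma~\ref{l:usad} from the case $t=1$ to arbitrary $t$. By Lemma~\ref{l:p3} one has $Y = Y(-c_1-\lambda^2-\rho^2, -c_1-2, -c_2, \dots, -c_t; c_1, \dots, c_t)$ with $\rho := c_1 - \lambda$, together with the explicit embedding of the intersection lattice in Figure~\ref{f:p3b}; the symmetry $e_1^0 \leftrightarrow e_2^0$ exchanges $\lambda$ and $\rho$, so we may assume $\lambda \geq \rho$. The goal is to show $\lambda - \rho \in \{1,3\}$, which then forces $\lambda^2+\rho^2 \in \{(c_1^2+1)/2,\, (c_1^2+9)/2\}$.

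First I compute $|V|$. Applying formula~\eqref{e:det}: the pairs $(-c_k,c_k)$ for $k\geq 2$ cancel in $\sum 1/p_i$, and together with $\lambda+\rho=c_1$ the surviving terms simplify to give
\[
|\det Q_\Psi| \;=\; (\lambda-\rho)^2 \prod_{k=2}^t c_k^2.
\]
Hence by Theorem~\ref{t:GJ} and~\eqref{e:|V|}, $|V| = |\det A| = |\lambda-\rho|\prod_{k=2}^t c_k$.

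The core of the argument is to bound the image of $\{\pm 1\}^m$ under the map $x\mapsto [Ax]\in V\cong \Z^m/\mathrm{im}\,A^t$. Reading relations off Figure~\ref{f:p3b}: the chain vertices $v_{j,k}=e_j^k-e_{j+1}^k$ identify all $[e_j^k]$ (for fixed $k$) to a single class $\epsilon_k$, while $v_0$, $v_{k+1}$ for $k\geq 2$, $v_2$, and $v_1$ respectively contribute the relations $\sum_k \epsilon_k=0$, $c_k\epsilon_k=0$, $c_1\epsilon_1+\epsilon_0^1+\epsilon_0^2=0$, and $c_1\epsilon_1-\lambda\epsilon_0^1-\rho\epsilon_0^2=0$, where $\epsilon_0^1:=[e_1^0]$ and $\epsilon_0^2:=[e_2^0]$. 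Using the first three of these to eliminate $\epsilon_1$ and $\epsilon_0^2$, a short manipulation rewrites, for $x \in \{\pm 1\}^m$,
\[
[Ax] \;=\; (x_1^0-x_2^0)\,\epsilon_0^1 \;+\; \sum_{k=2}^t (S_k - S_1 + x_2^0 c_1)\,\epsilon_k \ \in V, \qquad S_k := \sum_{j=1}^{c_k} x_j^k.
\]
The first coefficient takes only the three values $\{-2,0,2\}$, and for each fixed choice of $(x_1^0,x_2^0,S_1)$ the summands in the second term vary independently; since $c_k$ is odd and $S_k$ ranges over all odd integers with $|S_k|\leq c_k$, each $S_k$ hits every residue mod $c_k$. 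Therefore the image of $\{\pm 1\}^m$ in $V$ has at most $3\prod_{k=2}^t c_k$ elements.

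To conclude: since $Y$ bounds a rational homology ball, $d(Y,\mathfrak{s})=0$ for every $\mathfrak{s}\in r(\mathfrak{s}_0)+V$, and by the argument following~\eqref{e:simplif} each such $\mathfrak{s}$ is represented as $r(\mathfrak{s}_0)+[Ax]$ for some $x\in\{\pm 1\}^m$; so the map $\{\pm 1\}^m\to V$ is surjective. Combining with the image bound above yields $|V|\leq 3\prod_{k=2}^t c_k$, whence $|\lambda-\rho|\leq 3$; since $\lambda-\rho$ is odd (same parity as $c_1$), we obtain $\lambda-\rho\in\{1,3\}$, which gives the two families in the statement. The main obstacle is the bookkeeping in simplifying $[Ax]$ via the four quotient relations and verifying that the tuple $(S_k)_{k\geq 2}$ surjects onto $\bigoplus_{k=2}^t \Z/c_k$ despite the shared $S_1$; once that is in place, the counting argument is a direct extension of the $t=1$ case treated in Lemma~\ref{l:usad}.
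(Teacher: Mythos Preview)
Your argument is correct, but it takes a different route from the paper. The paper does \emph{not} generalise the computation of Lemma~\ref{l:usad} to arbitrary $t$; instead it reduces to the $t=1$ case topologically. Concretely, the paper observes that the pretzel knot $P(-c_2,c_2,\dots,-c_t,c_t,-c_1-\lambda^2-\rho^2,-c_1-2,c_1)$ is concordant (via the ribbon moves of Proposition~\ref{p:alg}) to the $3$--stranded knot $P(-c_1-\lambda^2-\rho^2,-c_1-2,c_1)$, so the corresponding Seifert spaces are rationally cobordant, and then Lemma~\ref{l:usad} applies directly to the smaller space.

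Your approach has the advantage of being self-contained and purely algebraic: you compute $|V|=|\lambda-\rho|\prod_{k\geq 2}c_k$ from~\eqref{e:det}, describe $V\cong\Z^m/\mathrm{im}\,A^t$ explicitly via the relations in Figure~\ref{f:p3b}, and bound the image of $\{\pm 1\}^m$ by $3\prod_{k\geq 2}c_k$. One small remark: the observation that each $S_k$ hits every residue mod $c_k$ is not actually needed for the \emph{upper} bound you want---it suffices that the class $[Ax]$ is determined by $(x_1^0-x_2^0,\,(S_k-S_1+x_2^0c_1)\bmod c_k)$, which already limits the image to at most $3\prod_{k\geq 2}c_k$ elements. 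The paper's approach, by contrast, is shorter and yields the extra statement recorded in Remark~\ref{r:concordance}, namely that adding cancelling pairs $(\alpha_j,-\alpha_j)$ to the Seifert invariants does not affect whether the space bounds a rational homology ball.
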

\begin{proof}
The Seifert spaces in Lemma~\ref{l:p3} are of the form
$$Y=Y(-c_{1}-\lambda^2-\rho^2,-c_{1}-2,-c_2,...,-c_{t};c_1,...,c_t),$$ 
for some $\lambda,\rho\in\Z$ such that $\lambda+\rho=c_1$. Since $Y$ is invariant under any change of order of the parameters, it is the double cover of $S^3$ branched over the pretzel knot 
$$K:=P(-c_2,c_2,-c_3,c_3,...,-c_{t},c_{t},-c_{1}-\lambda^2-\rho^2,-c_{1}-2,c_1).$$ 
Adding to the projection of $K$ the band shown in Figure~\ref{f:cob_pretzel} and performing a ribbon move along it, we obtain the disjoint union of an unknot and the pretzel knot 
$$K_2:=P(-c_3,c_3,...,-c_{t},c_{t},-c_{1}-\lambda^2-\rho^2,-c_{1}-2,c_1).$$
Therefore, $K\sim K_2$, where $\sim$ stands for concordance equivalence. Since concordance is an equivalence relation, we have $K\# -\overline K_2\sim K_2\# -\overline K_2$ and hence $K\#-\overline K_2$ is slice. It follows that its double branched cover, the $3$--manifold $Y\# -Y_2$, is the boundary of a rational homology ball, and therefore there exists a rational homology cobordism between $Y$ and 
$$Y_2:=Y(-c_3,c_3,...,-c_{t},c_{t},-c_{1}-\lambda^2-\rho^2,-c_{1}-2,c_1).$$
Obviously, we can repeat the same construction $t-1$ times obtaining that there exists a rational homology cobordism between $Y$ and the Seifert space with three exceptional fibers $Y(-c_{1}-\lambda^2-\rho^2,-c_{1}-2,c_1)$. Among these spaces, by Lemma~\ref{l:usad}, the only ones that possibly bound a rational homology ball are of the form $Y=Y(-c_1-\frac{c_1^2+1}{2},-c_1-2;c_1)$ and $Y=Y(-c_1-\frac{c_1^2+9}{2},-c_1-2;c_1)$.
\end{proof}

\begin{rem}\label{r:concordance}
The above corollary shows that the Seifert space \[Y(a_{1},\dots,a_{k},\alpha_{1},\dots,\alpha_{r},-\alpha_{1},\dots,-\alpha_{r}),\] where $a_{i},\alpha_{j}\in\Z$, bounds a rational homology ball if and only if $Y(a_{1},\dots,a_{k})$ does.
\end{rem}

\subsection{Proof of main Theorem}
In this last section we proof Theorem~\ref{t:slice} putting together the conclusions of the preceding sections. We shall give two nearly equivalent statements of our main result: one regarding the sliceness of pretzel knots, as stated in the Introduction, and another one focusing on the Seifert spaces bounding rational homology balls. Notice that in this second case our results are slightly weaker since we were able to determine, in Section~\ref{s:two}, that many of the knots in family $P(a,-a-2,-\frac{(a+1)^{2}}{2})$, $a\geq 3$ odd, are not slice but this does not imply that the double branched covers do not bound rational homology balls.

\begin{proof}[Proof of Theorem~\ref{t:slice}]
Up to considering mirror images we can suppose that the double branched cover of the slice knot $K$ bounds a negative definite four manifold. Since $K$ is slice $\sigma (K)=0$ and its double branched cover $Y$ bounds a rational homology ball. It follows that the slice conditions (SC) from Section~\ref{s:1step} are satisfied.

If $K$ is of type \textbf{(p1)} or \textbf{(p2)} then the theorem is an immediate corollary from Lemmas~\ref{l:p1} and \ref{l:p2}, since the Seifert spaces determine the pretzel knots up to reordering of the parameters. 

If $K$ belongs to \textbf{(p3)} then its double branched cover $Y$ is one of the Seifert spaces in Corollary~\ref{c:duda}. If $Y=Y(-c_1-\frac{c_1^2+9}{2},-c_{1}-2,-c_2,...,-c_{t};c_1,...,c_t)$ then by  Remarks~\ref{r:concordance} and \ref{r:ratballs}  it does not bound a rational homology ball and therefore $K$ is not slice. Finally, if $Y=Y(-c_1-\frac{c_1^2+1}{2},-c_{1}-2,-c_2,...,-c_{t};c_1,...,c_t)$ then it is the double branched cover of all pretzel knots with defining parameters $\{-c_1-\frac{c_1^2+1}{2},-c_{1}-2,-c_2,...,-c_{t};c_1,...,c_t\}$. Since the Alexander polynomial is invariant under mutation, all the different pretzel knots with the same set of parameters have the same Alexander polynomial. By assumption $K$ is slice and therefore its Alexander polynomial has a Fox-Milnor factorisation. It then follows that the Alexander polynomial of any knot concordant to $K$ also has a Fox-Milnor factorisation. Since  the pretzel knot $P(c_{1},-c_{1}-2,-\frac{(c_1+1)^{2}}{2},c_{2},-c_{2},\dots,c_{t},-c_{t})$ is clearly concordant to $P(c_{1},-c_{1}-2,-\frac{(c_1+1)^{2}}{2})$ and  by assumption $c_{1}\not\in\mathcal{E}$ by Theorem~\ref{t:Alex} we conclude that $K$ is not slice.
\end{proof}

As far as Seifert spaces bounding rational homology balls are concerned we have the following result which is just a rephrasing of Theorem~\ref{t:slice}.

\begin{thm} 
Consider a Seifert space of the form $Y=Y(p_{1},\dots,p_{n})$ with $n\geq 3$, $p_{1}$ even and $p_{i}$ odd for $i=2,\dots,n$. If $Y$ is the boundary of a rational homology ball, then the set of parameters $\mathcal P=\{p_{1},\dots,p_{n}\}$ verifies one of the following:
\begin{enumerate}
\item $n$ is even and $\mathcal P$ can be written as $\{p_{1},-p_{1}\pm 1,q_{1},-q_{1},\dots,q_{\frac{n}{2}},-q_{\frac{n}{2}}\}$
\item $n$ is odd and $\mathcal P$ can be written as $\{p_{1},q_{1},-q_{1},\dots,q_{\frac{n-1}{2}},-q_{\frac{n-1}{2}}\}$
\item $n$ is odd and $\mathcal P$ can be written as 
 \begin{enumerate}
  \item $\{a,-a-2,-\frac{(a+1)^{2}}{2},q_{1},-q_{1},\dots,q_{\frac{n-3}{2}},-q_{\frac{n-3}{2}}\}$ or 
  \item $\{-a,a+2,\frac{(a+1)^{2}}{2},q_{1},-q_{1},\dots,q_{\frac{n-3}{2}},-q_{\frac{n-3}{2}}\}$ 
 \end{enumerate} 
  where $a\geq 3$ and $|p_{1}|=\frac{(a+1)^{2}}{2}$.
\end{enumerate} 
Moreover, in the two first cases the rational homology ball exists.
\end{thm}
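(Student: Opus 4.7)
The plan is to mirror the argument given for Theorem~\ref{t:slice} at the end of Section~\ref{s:step2}, reinterpreting each input phrased in terms of a slice knot as an input about the $3$-manifold $Y$. First, replacing $Y$ by $-Y$ if necessary (which negates every $p_i$ and in particular swaps the subcases $(3a) \leftrightarrow (3b)$ while preserving cases $(1)$ and $(2)$), we may assume $\sum 1/p_i > 0$, so that $Y = \partial M_\Psi$ for the canonical negative definite plumbing of Section~\ref{s:pre}. Gluing $M_\Psi \cup_Y (-W)$ to the rational homology ball $W$ produces a closed negative definite $4$-manifold of rank $m$, whose intersection form must be diagonal by Donaldson's theorem; this yields the embedding $(\Z^m, Q_\Psi) \hookrightarrow (\Z^m, -\mathrm{Id})$ needed by the slice conditions (SC).

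The other ingredient of (SC), the vanishing of the signature $\sigma(P_\Psi) = \mathrm{sign}(Q_\Psi) - w \cdot w$ of \emph{any} pretzel knot with parameter multiset $\{p_1, \ldots, p_n\}$, is a topological invariant of $Y$ alone: by Saveliev's formula~\eqref{e:Sa} it coincides with Neumann's $\bar\mu$ invariant. The substance of the remaining step is to check that $w \cdot w = -m$ under the assumption that $Y$ bounds a rational homology ball. I would push the Wu class through the Donaldson embedding and use the rank equation $-\mathrm{sign}(Q_\Psi) = m$, together with the explicit description of the Wu set in Figure~\ref{f:Wuset}, to force the image of $w$ in $(\Z^m, -\mathrm{Id})$ to have every coordinate equal to $\pm 1$. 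This is the principal technical obstacle distinguishing the present theorem from Theorem~\ref{t:slice}, since one can no longer invoke $\sigma(K) = 0$ directly from a slicing disc.

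Granting (SC), Lemmas~\ref{l:p1} and \ref{l:p2} deliver cases $(1)$ and $(2)$ directly, while Lemma~\ref{l:p3} reduces case (p3) to a one-parameter family of Seifert spaces. Corollary~\ref{c:duda} then trims this family to exactly two candidates, of which $Y(-c_1 - \tfrac{c_1^2 + 1}{2}, -c_1 - 2, -c_2, \ldots, -c_t; c_1, \ldots, c_t)$ produces case $(3a)$ while $Y(-c_1 - \tfrac{c_1^2 + 9}{2}, -c_1 - 2, -c_2, \ldots, -c_t; c_1, \ldots, c_t)$ is eliminated by combining Remark~\ref{r:concordance} with Remark~\ref{r:ratballs}: the three-stranded model $Y(a, -a-2, -a - \tfrac{a^2+9}{2})$ does not bound a rational homology ball by the Casson-Gordon argument of Section~\ref{s:step3}, and the concordance reduction of Corollary~\ref{c:duda} shows this property is inherited by the full family.

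Finally, the ``moreover'' clause is handled by the Ribbon Algorithm (Proposition~\ref{p:alg}): in cases $(1)$ and $(2)$, one can reorder the parameters so that the algorithm exhibits the corresponding pretzel knot as ribbon, whence the double cover of $B^4$ branched over the ribbon disc is a rational homology $4$-ball bounding $Y$ by \cite[Lemma 17.2]{b:Ka}.
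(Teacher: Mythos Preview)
Your approach coincides with the paper's, which offers no separate proof and simply calls the theorem ``a rephrasing of Theorem~\ref{t:slice}.'' You actually go further than the paper by isolating the one place where the rational-homology-ball hypothesis is genuinely weaker than the sliceness hypothesis used in Theorem~\ref{t:slice}: verifying $\sigma(P_\Psi)=0$ (equivalently $w\cdot w=-m$) so that the slice conditions~(SC) of Section~\ref{s:1step} are available.

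Your sketch for this step, however, is incomplete. Pushing the Wu class through the Donaldson embedding does give something: since $|H_1(Y)|=|\det Q_\Psi|$ is an odd square, the embedding $(\Z^m,Q_\Psi)\hookrightarrow(\Z^m,-\mathrm{Id})$ has odd index, and hence $\iota(w)$ is characteristic for $-\mathrm{Id}$, so every coordinate of $\iota(w)$ is odd and $w\cdot w\le -m$. But this is only the inequality $\bar\mu(Y)\ge 0$; the embedding argument by itself does not force each coordinate to be exactly $\pm 1$. Neither the rank equation nor the explicit Wu sets of Figure~\ref{f:Wuset} supply the missing direction $w\cdot w\ge -m$, and the proofs of Lemmas~\ref{l:p1}--\ref{l:p3} genuinely use the equality (for instance, the conclusion $\beta_j^k x(i)_j^k\in\{0,1\}$ in Lemma~\ref{l:p1} fails once some $|\beta_j^k|\ge 3$). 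The paper itself does not address this point. One way to close the gap is to observe that, since $|H_1(Y)|$ is odd, the unique spin structure on $Y$ is the unique conjugation-fixed $\mathrm{Spin}^c$ structure and therefore lies in $r(\mathrm{Spin}^c(W))$; then $d(Y,\mathfrak{s}_{\mathrm{spin}})=0$, and for the negative-definite star-shaped plumbings at hand this correction term computes $-\bar\mu(Y)/4$, forcing $\bar\mu(Y)=0$.

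Everything else in your proposal---the reduction via Lemmas~\ref{l:p1}--\ref{l:p3}, the trimming by Corollary~\ref{c:duda}, the elimination of the $\tfrac{a^2+9}{2}$ family via Remarks~\ref{r:ratballs} and~\ref{r:concordance}, and the ``moreover'' clause via Proposition~\ref{p:alg}---is correct and matches exactly what the paper intends.
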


\appendix
\section{}
In this appendix we compute the Alexander polynomial of the  pretzel knots $P_{a}=P(a,-a-2,-\frac{(a+1)^{2}}{2})$ where $a>1$ odd. To this end we shall heavily rely on Jabuka's theorem \cite[Theorem~3.1]{b:Ja} which gives in particular a symmetrized linking form $\mathcal L + \mathcal L^\tau$ of the pretzel knots $P_{a}$ associated to the Seifert surfaces given in \cite[Figure~2]{b:Ja}. We shall use the following notation, where $A_{n}$ is a square matrix of size $n$:
$$
A_n = \left(
\begin{array}{cccccc}
t-1 & -1 & -1 & ... & -1 & -1 \\
t & t-1 &  -1 & ... & -1 & -1 \\
t & t & t-1 &  ... & -1 & -1 \cr
\vdots & \vdots & \vdots   & \ddots & \vdots & \vdots  \cr 
t & t & t &  ... & t-1 & -1  \cr
t & t & t &  ... & t & t-1 \cr
\end{array}
\right).
$$

\begin{thm}[Jabuka]
The Alexander polynomial $\Delta_{a}(t)$ of the knots in family $P_{a}$ satisfies $\Delta_{a}(t)\doteq\det\Theta_{a}$ where %
$$
\Theta_{a}=\mathcal L -t \mathcal L^\tau  = \left(
\begin{array}{ccc|ccc|c|c}
   &       &   &   &           &      &  -1        & 0                   \cr 
   & A_{a+1} &   &   & 0         &      &  \vdots    & \vdots              \cr 
   &       &   &   &           &      &   -1       & 0                   \cr \hline 
   &       &   &   &           &      &    1       & 0                   \cr
   & 0     &   &   & -A_{a-1}  &      &   \vdots   & \vdots              \cr
   &       &   &   &           &      &   1        & 0                   \cr \hline 
 t & \dots & t &-t & \dots     & -t   & 0          &   -t                        \cr \hline 
 0 & \dots & 0 & 0 & \dots     & 0    & 1          &  \tfrac{(a+1)^{2}}{4}(1-t)  \cr 
\end{array} 
\right).
$$
\end{thm}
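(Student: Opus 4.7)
The plan is to derive this statement as a direct application of Jabuka's Theorem 3.1 in \cite{b:Ja}, which, for a Seifert surface of a three-stranded pretzel knot $P(p_1,p_2,p_3)$ constructed as in \cite[Figure~2]{b:Ja}, produces an explicit block description of the Seifert form $\mathcal L$ in terms of the three parameters $p_1,p_2,p_3$. Once $\mathcal L$ is in hand, the classical identity
\begin{equation*}
\Delta_K(t)\doteq\det(\mathcal L-t\mathcal L^{\tau})
\end{equation*}
reduces the problem to specialising Jabuka's matrix to the triple $(p_1,p_2,p_3)=(a,-a-2,-\tfrac{(a+1)^{2}}{2})$ and reading off the blocks.

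The first step is to recall the structure of the Seifert surface $F_a$ for $P_a$: it has three ``twist bands'' of genus $\lfloor |p_i|/2\rfloor$ meeting along two dual arcs, and two additional generators coming from the even parameter. On the twist band with $|p_i|$ crossings one obtains, in a standard basis, the $\lfloor |p_i|/2\rfloor\times\lfloor |p_i|/2\rfloor$ triangular block whose $(j,k)$ entry records $\mathrm{lk}(\alpha_j,\alpha_k^+)$; after forming $\mathcal L-t\mathcal L^{\tau}$ one recognises precisely the block $A_n$ for the positive twist and $-A_n$ for the negative one. Substituting $n=a+1$ for the band with $a+1$ pairs of crossings coming from $p_2=-a-2$ (yielding $-A_{a-1}$ with a sign) and $n=a-1$ for the band coming from $p_1=a$, one fills in the two upper-left diagonal blocks of $\Theta_a$. \emph{Here a careful check of orientation conventions is needed} so that the signs of the $A$--blocks and of the off-diagonal $\pm t$ rows match Jabuka's formula.

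The second step is to handle the band corresponding to the even parameter $p_3=-\tfrac{(a+1)^{2}}{2}$. In Jabuka's set-up this contributes a single generator whose self-linking equals $\tfrac{p_3}{2}=-\tfrac{(a+1)^{2}}{4}$, producing the diagonal entry $\tfrac{(a+1)^{2}}{4}(1-t)$ in the lower-right corner after symmetrisation. The remaining off-diagonal entries (the column of $-1$'s and $+1$'s above, the pair $0,-t$ in the last two positions, and the bottom row of alternating $t$'s and $-t$'s) encode the linking numbers between the dual arc generators and the generators of the three twist bands; these are straightforward to read off from the picture of $F_a$.

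The last step is purely bookkeeping: assemble the blocks into the $(a+1)+(a-1)+1+1$ partitioned matrix and verify it agrees with $\Theta_a$ as displayed. The main obstacle, as indicated, is \emph{not} any deep calculation but rather the orientation and sign conventions: one must be careful that the mirror conventions used for the negative parameters $-a-2$ and $-\tfrac{(a+1)^2}{2}$ produce the signs in Jabuka's presentation. Modulo this, the theorem is an immediate specialisation of \cite[Theorem~3.1]{b:Ja}, and the polynomial identity $\Delta_a(t)\doteq\det\Theta_a$ follows from the standard Seifert-matrix formula for the Alexander polynomial.
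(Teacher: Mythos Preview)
The paper does not give a proof of this statement at all: it is labeled ``Theorem [Jabuka]'' precisely because it is quoted from \cite[Theorem~3.1]{b:Ja}, with the matrix $\Theta_a$ obtained by plugging the parameters $(a,-a-2,-\tfrac{(a+1)^2}{2})$ into Jabuka's general block formula for $\mathcal L - t\mathcal L^\tau$. Your proposal is correct in spirit and matches the paper's treatment exactly---both simply invoke Jabuka's result and specialise.

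That said, the intermediate mechanics you sketch are garbled. You claim the twist bands contribute blocks of size $\lfloor |p_i|/2\rfloor$, but for $p_1=a$ and $p_2=-a-2$ (both odd) this would give blocks of size $\tfrac{a-1}{2}$ and $\tfrac{a+1}{2}$, not the sizes $a+1$ and $a-1$ that actually appear in $\Theta_a$. You also swap the assignment of parameters to blocks mid-sentence (``$n=a+1$ for \ldots $p_2=-a-2$ (yielding $-A_{a-1}$\ldots)''). In Jabuka's construction the odd parameter $p$ contributes a block of size $|p|-1$ coming from $|p|-1$ band generators, so $p_2=-a-2$ gives the $A_{a+1}$ block and $p_1=a$ gives the $-A_{a-1}$ block; the even parameter contributes the single $1\times 1$ block in the corner. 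Since the statement is a citation rather than something to be proved here, these slips are harmless for the paper's purposes, but if you intend to actually rederive the matrix you should correct the block-size count and the parameter-to-block assignment.
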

Expanding the determinant of $\Theta_{a}$ by the last column we obtain
\begin{equation}\label{e:Alexp}
\Delta_{a}(t)\doteq\det\Theta_{a}=t\det A_{a+1}\det A_{a-1}+\frac{(a+1)^{2}}{4}(1-t)\det B_{a},
\end{equation} 
where $B_{a}$ equals $\Theta_{a}$ with the last row and column removed. In the following two lemmas we shall compute the determinants of $A_{n}$ and $B_{n}$ for all $n\in\N$. 

\begin{lem}\label{l:detA}
We have $\det A_{n}=\sum_{i=0}^{n}(-1)^{n-i}t^{i}$
\end{lem}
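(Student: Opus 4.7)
The plan is to prove the identity by induction on $n$, after first reducing $A_n$ to a very sparse form via elementary row operations. Concretely, I would perform $R_i \mapsto R_i - R_{i-1}$ for $i = 2, \ldots, n$, which preserves the determinant. A direct check, using that rows $i-1$ and $i$ of $A_n$ agree outside columns $i-1$ and $i$, shows that the new row $i$ becomes $(0, \ldots, 0, 1, t, 0, \ldots, 0)$ with the $1$ in column $i-1$ and the $t$ in column $i$, while $R_1 = (t-1, -1, \ldots, -1)$ is unchanged.

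Next I would expand this reduced determinant along the last column. Only two entries in column $n$ are nonzero: the $-1$ in position $(1,n)$ coming from $R_1$, and the $t$ in position $(n,n)$ coming from the new $R_n$. Deleting row $n$ and column $n$ from the reduced matrix gives exactly the matrix obtained from $A_{n-1}$ by the analogous row operations, so that cofactor equals $\det A_{n-1}$. Deleting row $1$ and column $n$ produces an $(n-1) \times (n-1)$ matrix that is upper triangular with $1$s on the diagonal and $t$s on the superdiagonal, so that cofactor equals $1$. Collecting the signs $(-1)^{1+n}$ and $(-1)^{n+n}$ gives the recurrence
\[
\det A_n \;=\; t \det A_{n-1} + (-1)^n, \qquad n \geq 2.
\]

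To finish, I would note that the candidate $P_n(t) := \sum_{i=0}^{n} (-1)^{n-i} t^{i}$ satisfies the same recurrence $P_n = t P_{n-1} + (-1)^n$ straight from its definition, and that $P_1(t) = t - 1 = \det A_1$ supplies the base case. Induction on $n$ then yields $\det A_n = P_n$ for every $n \geq 1$. I do not anticipate any serious obstacle here: the only thing requiring care is the bookkeeping of cofactor signs and a careful verification that the $(1,n)$-minor really is the claimed upper-triangular matrix; everything else is mechanical.
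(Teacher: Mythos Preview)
Your argument is correct and follows the same inductive strategy as the paper, though the details differ slightly. The paper performs a single row operation (subtracting row~2 from row~1) and expands along the first row, which produces the recurrence $\det A_n = -\det A_{n-1} + t^{n}$ via an auxiliary matrix $C_n$ (equal to $A_n$ with top-left entry $t$) satisfying $\det C_n = t^n$. Your approach instead reduces all rows $2,\ldots,n$ and expands along the last column, yielding the alternative recurrence $\det A_n = t\det A_{n-1} + (-1)^n$; this avoids introducing $C_n$ at the cost of a few more row operations. Both recurrences unwind immediately to the claimed closed form, so the two proofs are minor variants of the same idea.

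One small point of exposition: for your row operations $R_i \mapsto R_i - R_{i-1}$ to use the \emph{original} $R_{i-1}$ (as your computation assumes), they should be performed in decreasing order of $i$, or simultaneously; if done from $i=2$ upward, the already-modified $R_{i-1}$ would be subtracted and the resulting rows would not be as you describe. This is purely a matter of wording and does not affect the validity of the argument.
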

\begin{proof}
The statement is trivially true for $A_{1}$. Let us suppose the lemma holds for $A_{1},\dots,A_{n-1}$ and we will show that it is then true for $A_{n}$ finishing the proof. After subtracting the second row from the first in $A_{n}$ expand the determinant by the first row to obtain
$$\det A_{n}=-\det A_{n-1}+t\det C_{n-1},$$
where the entries of the matrix $C_{n}$ are defined as
$$(c_{n})_{ij}=
\left\{ 
\begin{array}{l l}
(c_{n})_{11}=t\\
(c_{n})_{ij}=(a_{n})_{ij}\ \mbox{for } (i,j)\neq (1,1).
 \end{array} \right.
$$
Remark that the determinant of $C_{n}$ satisfies $\det C_{n}=t\det C_{n-1}$ and therefore $\det C_{n}=t^{n}$. We have then $\det A_{n}=-\det A_{n-1}+t^{n}$ concluding the proof.
\end{proof}

\begin{lem}
It holds $\det B_{n}=t^{n+1}-t^{n}$
\end{lem}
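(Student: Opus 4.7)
The plan is to compute $\det B_n$ by exploiting the block structure of the matrix through the Schur complement formula, reducing everything to quantities controlled by Lemma~\ref{l:detA}. After removing the last row and column of $\Theta_{a}$, one has
\[
B_n \;=\; \begin{pmatrix} A_{n+1} & 0 & -\mathbf{1}_{n+1} \\ 0 & -A_{n-1} & \mathbf{1}_{n-1} \\ t\,\mathbf{1}_{n+1}^{T} & -t\,\mathbf{1}_{n-1}^{T} & 0 \end{pmatrix},
\]
where $\mathbf{1}_k$ denotes the all-ones column vector of length $k$. Treating the top-left $2n \times 2n$ block $D = \mathrm{diag}(A_{n+1},-A_{n-1})$ as the invertible part and the last row/column as a border, the Schur complement identity gives
\[
\det B_n \;=\; (-1)^{n-1}\,t\,\det(A_{n+1})\det(A_{n-1})\bigl(\sigma_{n+1}-\sigma_{n-1}\bigr),
\]
where $\sigma_k := \mathbf{1}_k^{T} A_k^{-1}\mathbf{1}_k$. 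Thus the problem reduces to evaluating these two scalars.

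The key intermediate step is the identity $\sigma_k = \det(A_{k-1})/\det(A_k)$. To prove it I would solve the system $A_k y = \mathbf{1}_k$ directly by introducing the prefix sums $P_i := y_1 + \cdots + y_i$; the $i$-th row of $A_k$ then yields the simple one-step recursion $t P_i + P_{i-1} = 1 + \sigma_k$ with $P_0 = 0$. An easy induction on $i$, using the identity $\det A_\ell + \det A_{\ell-1} = t^\ell$ (immediate from the explicit formula in Lemma~\ref{l:detA}), gives $P_i = (1+\sigma_k)\det(A_{i-1})/t^i$. Setting $i = k$, using $P_k = \sigma_k$ and $t^k - \det A_{k-1} = \det A_k$, produces the claimed identity.

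Substituting the identity back collapses the formula to
\[
\det B_n \;=\; (-1)^{n-1}\,t\,\bigl(\det A_n\det A_{n-1} - \det A_{n+1}\det A_{n-2}\bigr).
\]
To evaluate the bracket I would pass to the closed form $\det A_k = (t^{k+1}+(-1)^k)/(t+1)$, another direct consequence of Lemma~\ref{l:detA}. After expanding the two products and cancelling the $t^{2n+1}$ terms, the bracket reduces to $(-1)^n\, t^{n-1}(1 + t - t^2 - t^3)/(t+1)^2$, and the factorization $1+t-t^2-t^3 = (1+t)^2(1-t)$ simplifies this to $(-1)^n t^{n-1}(1-t)$. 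Combining with the prefactor $(-1)^{n-1}t$ produces exactly $t^{n+1} - t^n$.

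The only genuine obstacle I expect is the sign bookkeeping and the final polynomial identity; everything preceding it is a routine Schur-complement reduction coupled with a straightforward induction controlled by the recursion for $\det A_\ell$ already supplied by Lemma~\ref{l:detA}.
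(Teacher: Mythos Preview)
Your proof is correct. Both you and the paper reach the same pivotal intermediate identity
\[
\det B_n \;=\; (-1)^{n-1}\,t\bigl(\det A_n\det A_{n-1}-\det A_{n+1}\det A_{n-2}\bigr),
\]
but the routes to it are genuinely different. The paper performs explicit row and column operations on $B_n$ to obtain a cleaned-up matrix $B_n'$, then expands along the last column; two of the four resulting minors are seen to vanish and the remaining two give the product above. Your Schur-complement reduction bypasses the row operations and the identification of the vanishing minors, replacing them with the computation of the scalar $\sigma_k=\mathbf{1}^T A_k^{-1}\mathbf{1}$, for which your prefix-sum recursion $tP_i+P_{i-1}=1+\sigma_k$ is a neat device yielding $\sigma_k=\det A_{k-1}/\det A_k$ directly. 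In the final simplification the paper telescopes using $\det A_{k}=t^{k}-\det A_{k-1}$ to reach $(-1)^n t^n(t^2\det A_{n-2}-\det A_n)$ and then reads off $t^{n+1}-t^n$, while you substitute the closed form $(t^{k+1}+(-1)^k)/(t+1)$ and factor $1+t-t^2-t^3=(1+t)^2(1-t)$; both endings are of comparable length. One small point worth making explicit in your write-up: the Schur complement step requires $D$ invertible, which holds over the field $\mathbb{Q}(t)$ since $\det A_k$ is a nonzero polynomial; the resulting polynomial identity then holds in $\mathbb{Z}[t]$.
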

\begin{proof}
A series of evident row and column operations on $B_{n}$ yield the following matrix
{\footnotesize
$$
B_{n}' = \left(
\begin{array}{ccc|ccc|c}
   &       &   &   &           &      &  0          \cr 
   & A_{n+1}' &   &   & 0         &    &  \vdots       \cr 
      &       &   &   &           &      &  0          \cr 
   &       &   &   &           &      &   -1          \cr \hline 
   &       &   &   &           &      &    0          \cr
   & 0     &   &   & -A_{n-1}'  &      &   \vdots      \cr
      &       &   &   &           &      &    0          \cr
   &       &   &   &           &      &   1           \cr \hline 
 0 & \dots  & 0\ t & 0 & \dots   &0 -t   &   0           \cr 
\end{array} 
\right),
$$
}
where $\det A'_{n}=\det A_{n}$. Expanding the determinant of $B_{n}'$ by the last column we obtain
\begin{align*}
\det B_{n}=\,&-\big(-t\det A_{n+1}\det (-A_{n-2})\pm t\det C_{1}\big)\\
&+(-1)^{n+1}\left((-1)^{n-1}t\det A_{n}\det(-A_{n-1})- t\det C_{2}\right),
\end{align*}
where $C_{1}$ and $C_{2}$ are easily seen to have vanishing determinant. By the proof of Lemma~\ref{l:detA} it then follows
\begin{align*}
\det B_{n} & =(-1)^{n}t\left(\det A_{n+1}\det A_{n-2}-\det A_{n}\det A_{n-1}\right)\\
&=(-1)^{n}t\left((t^{n+1}-\det A_{n})\det A_{n-2}-\det A_{n}(t^{n-1}-\det A_{n-2}) \right)\\
&=(-1)^{n}t^{n}\left(t^{2}\det A_{n-2}-\det A_{n}\right)\\
&=(-1)^{n}t^{n}\left((-1)^{n}(t-1)\right)\\
&=t^{n+1}-t^{n}.
\end{align*}
\end{proof}

Notice that for $n$ even we have $\det A_{n}=\frac{t^{n+1}+1}{t+1}$. Substituting in the expression~\ref{e:Alexp} the results of the last two lemmas we obtain, since $a$ is odd, 
\begin{align*}
 \Delta_{a}(t)&\doteq \frac{t^{a+2}+1}{t+1}\frac{t^a+1}{t+1}-\frac{(a+1)^{2}}{4}t^{a-1}(t-1)^{2}\\
&=\prod_{
\substack{d|a+2\\ d\neq 1}
}\Phi_{d}(-t)
\prod_{
\substack{\delta|a\\\delta\neq 1}
}\Phi_{\delta}(-t)-\frac{(a+1)^{2}}{4}t^{a-1}(t-1)^{2},
\end{align*}
where $\Phi_{n}$ stands for the $n$-th cyclotomic polynomial.

\bibliographystyle{amsalpha}
\bibliography{bibliografia}

\providecommand{\bysame}{\leavevmode\hbox to3em{\hrulefill}\thinspace}
\providecommand{\MR}{\relax\ifhmode\unskip\space\fi MR }
\providecommand{\MRhref}[2]{%
  \href{http://www.ams.org/mathscinet-getitem?mr=#1}{#2}
}
\providecommand{\href}[2]{#2}
\begin{thebibliography}{HKL10}

\bibitem[AV08]{b:AV}
Omran Ahmadi and Gerardo Vega, \emph{On the parity of the number of irreducible
  factors of self-reciprocal polynomials over finite fields}, Finite Fields
  Appl. \textbf{14} (2008), no.~1, 124--131. \MR{2381481 (2008k:12003)}

\bibitem[BZ03]{b:BZ}
Gerhard Burde and Heiner Zieschang, \emph{Knots}, second ed., de Gruyter
  Studies in Mathematics, vol.~5, Walter de Gruyter \& Co., Berlin, 2003.
  \MR{1959408 (2003m:57005)}

\bibitem[CF08]{b:CiF}
David Cimasoni and Vincent Florens, \emph{Generalized {S}eifert surfaces and
  signatures of colored links}, Trans. Amer. Math. Soc. \textbf{360} (2008),
  no.~3, 1223--1264 (electronic). \MR{2357695 (2009b:57009)}

\bibitem[CG78]{b:CG2}
A.~J. Casson and C.~McA. Gordon, \emph{On slice knots in dimension three},
  Algebraic and geometric topology ({P}roc. {S}ympos. {P}ure {M}ath.,
  {S}tanford {U}niv., {S}tanford, {C}alif., 1976), {P}art 2, Proc. Sympos. Pure
  Math., XXXII, Amer. Math. Soc., Providence, R.I., 1978, pp.~39--53.
  \MR{520521 (81g:57003)}

\bibitem[CG86]{b:CG}
\bysame, \emph{Cobordism of classical knots}, \`{A} la recherche de la
  topologie perdue, Progr. Math., vol.~62, Birkh\"auser Boston, Boston, MA,
  1986, With an appendix by P. M. Gilmer, pp.~181--199. \MR{900252}

\bibitem[Don87]{b:Do}
S.~K. Donaldson, \emph{The orientation of {Y}ang-{M}ills moduli spaces and
  {$4$}-manifold topology}, J. Differential Geom. \textbf{26} (1987), no.~3,
  397--428. \MR{910015 (88j:57020)}

\bibitem[Eft]{b:Ef}
E.~Eftekhary, \emph{Heegaard floer homologies for pretzel knots}, preprint
  arXiv:0311419v1 [math.GT].

\bibitem[FM66]{b:FM}
Ralph~H. Fox and John~W. Milnor, \emph{Singularities of {$2$}-spheres in
  {$4$}-space and cobordism of knots}, Osaka J. Math. \textbf{3} (1966),
  257--267. \MR{0211392 (35 \#2273)}

\bibitem[Fox62]{b:Fo}
R.~H. Fox, \emph{Some problems in knot theory}, Topology of 3-manifolds and
  related topics ({P}roc. {T}he {U}niv. of {G}eorgia {I}nstitute, 1961),
  Prentice-Hall, Englewood Cliffs, N.J., 1962, pp.~168--176. \MR{0140100 (25
  \#3523)}

\bibitem[GJ11]{b:GJ}
Joshua Greene and Stanislav Jabuka, \emph{The slice-ribbon conjecture for
  3-stranded pretzel knots}, Amer. J. Math. \textbf{133} (2011), no.~3,
  555--580. \MR{2808326 (2012e:57005)}

\bibitem[GS99]{b:GS}
Robert~E. Gompf and Andr{\'a}s~I. Stipsicz, \emph{{$4$}-manifolds and {K}irby
  calculus}, Graduate Studies in Mathematics, vol.~20, American Mathematical
  Society, Providence, RI, 1999. \MR{1707327 (2000h:57038)}

\bibitem[HKL10]{b:H}
Chris Herald, Paul Kirk, and Charles Livingston, \emph{Metabelian
  representations, twisted {A}lexander polynomials, knot slicing, and
  mutation}, Math. Z. \textbf{265} (2010), no.~4, 925--949. \MR{2652542
  (2011g:57006)}

\bibitem[Jab10]{b:Ja}
Stanislav Jabuka, \emph{Rational {W}itt classes of pretzel knots}, Osaka J.
  Math. \textbf{47} (2010), no.~4, 977--1027. \MR{2791566 (2012b:57025)}

\bibitem[Kau87]{b:Ka}
Louis~H. Kauffman, \emph{On knots}, Annals of Mathematics Studies, vol. 115,
  Princeton University Press, Princeton, NJ, 1987. \MR{907872 (89c:57005)}

\bibitem[Lis07a]{b:Li}
Paolo Lisca, \emph{Lens spaces, rational balls and the ribbon conjecture},
  Geom. Topol. \textbf{11} (2007), 429--472. \MR{2302495 (2008a:57008)}

\bibitem[Lis07b]{b:Li2}
\bysame, \emph{Sums of lens spaces bounding rational balls}, Algebr. Geom.
  Topol. \textbf{7} (2007), 2141--2164. \MR{2366190 (2008m:57018)}

\bibitem[Lon]{b:Long}
L.~Long, \emph{Personal communication}.

\bibitem[Mon73]{b:Mo}
Jos{\'e}~M. Montesinos, \emph{Seifert manifolds that are ramified two-sheeted
  cyclic coverings}, Bol. Soc. Mat. Mexicana (2) \textbf{18} (1973), 1--32.
  \MR{0341467 (49 \#6218)}

\bibitem[Neu81]{b:Ne}
Walter~D. Neumann, \emph{A calculus for plumbing applied to the topology of
  complex surface singularities and degenerating complex curves}, Trans. Amer.
  Math. Soc. \textbf{268} (1981), no.~2, 299--344. \MR{632532 (84a:32015)}

\bibitem[NR78]{b:NR}
Walter~D. Neumann and Frank Raymond, \emph{Seifert manifolds, plumbing, {$\mu
  $}-invariant and orientation reversing maps}, Algebraic and geometric
  topology ({P}roc. {S}ympos., {U}niv. {C}alifornia, {S}anta {B}arbara,
  {C}alif., 1977), Lecture Notes in Math., vol. 664, Springer, Berlin, 1978,
  pp.~163--196. \MR{518415 (80e:57008)}

\bibitem[OS03]{b:OS}
Peter Ozsv{\'a}th and Zolt{\'a}n Szab{\'o}, \emph{On the {F}loer homology of
  plumbed three-manifolds}, Geom. Topol. \textbf{7} (2003), 185--224
  (electronic). \MR{1988284 (2004h:57039)}

\bibitem[Rei74]{b:Re}
K.~Reidemeister, \emph{Knotentheorie}, Springer-Verlag, Berlin, 1974, Reprint.
  \MR{0345089 (49 \#9828)}

\bibitem[Sav00]{b:Sa}
Nikolai Saveliev, \emph{A surgery formula for the {$\overline\mu$}-invariant},
  Topology Appl. \textbf{106} (2000), no.~1, 91--102. \MR{1769335
  (2001g:57025)}

\bibitem[vR62]{b:Ran}
Rabe von Randow, \emph{Zur {T}opologie von dreidimensionalen
  {B}aummannigfaltigkeiten}, Bonn. Math. Schr. No. \textbf{14} (1962), v+131.
  \MR{0155304 (27 \#5239)}

\end{thebibliography}

\vspace{1cm}
\end{document}